\font\tengoth=eufm10 at 10pt
\font\sevengoth=eufm7 at 6pt
\newcommand{\mlabel}[1]{\label{#1}}
\newcommand{\g}{{\mathfrak g}}
\newcommand{\h}{{\mathfrak h}}
\newcommand{\fa}{{\mathfrak a}}
\newcommand{\fg}{{\mathfrak g}}
\newcommand{\fh}{{\mathfrak h}}
\newcommand{\fo}{{\mathfrak o}}
\newcommand{\fq}{{\mathfrak q}}
\newcommand{\fu}{{\mathfrak u}}
\newcommand{\bx}{\mathbf{x}}
\renewcommand{\:}{\colon}
\newcommand{\1}{\mathbf{1}}
\newcommand{\cA}{\mathcal{A}}
\newcommand{\cD}{\mathcal{D}}
\newcommand{\cE}{\mathcal{E}}
\newcommand{\cF}{\mathcal{F}}
\newcommand{\cH}{\mathcal{H}}
\newcommand{\cL}{\mathcal{L}}
\newcommand{\cN}{\mathcal{N}}
\newcommand{\cO}{\mathcal{O}}
\newcommand{\cV}{\mathcal{V}}
\newcommand{\eset}{\emptyset}
\newcommand{\derat}[1]{\frac{d}{dt} \hbox{\vrule width0.5pt
                height 5mm depth 3mm${{}\atop{{}\atop{\scriptstyle t=#1}}}$}}
\renewcommand{\phi}{\varphi}
\newcommand{\dd}{{\tt d}}
\newcommand{\subeq}{\subseteq}
\newcommand{\supeq}{\supseteq}
\newcommand{\eps}{\varepsilon}
\newcommand{\N}{{\mathbb N}}
\newcommand{\R}{{\mathbb R}}
\newcommand{\C}{{\mathbb C}}
\renewcommand{\hat}{\widehat}
\renewcommand{\tilde}{\widetilde}
\newcommand{\GL}{\mathop{{\rm GL}}\nolimits}
\newcommand{\SO}{\mathop{{\rm SO}}\nolimits}
\newcommand{\OO}{\mathop{\rm O{}}\nolimits}
\newcommand{\U}{\mathop{\rm U{}}\nolimits}
\newcommand{\ad}{\mathop{{\rm ad}}\nolimits}
\newcommand{\Ad}{\mathop{{\rm Ad}}\nolimits}
\renewcommand{\Re}{\mathop{{\rm Re}}\nolimits}
\newcommand{\End}{\mathop{{\rm End}}\nolimits}
\newcommand{\id}{\mathop{{\rm id}}\nolimits}
\renewcommand{\dim}{\mathop{{\rm dim}}\nolimits}
\newcommand{\supp}{\mathop{{\rm supp}}\nolimits}
\newcommand{\Spann}{\mathop{{\rm span}}\nolimits}
\newcommand{\ev}{\mathop{{\rm ev}}\nolimits}
\newcommand{\oline}{\overline}
\newcommand{\la}{\langle}
\newcommand{\ra}{\rangle}
\newcommand{\Mot}{{\rm Mot}}
\newcommand{\res}{\vert}
\newcommand{\Spin}{{\rm Spin}}
\newcommand{\ssssarr}{\hbox to 15pt{\rightarrowfill}}
\newcommand{\sssarr}{\hbox to 20pt{\rightarrowfill}}
\newcommand{\ssarr}{\hbox to 30pt{\rightarrowfill}}
\newcommand{\sarr}{\hbox to 40pt{\rightarrowfill}}
\newcommand{\arr}{\hbox to 60pt{\rightarrowfill}}
\newcommand{\larr}{\hbox to 60pt{\leftarrowfill}}
\newcommand{\Arr}{\hbox to 80pt{\rightarrowfill}}
\def\theoremname{Theorem}
\def\propositionname{Proposition}
\def\corollaryname{Corollary}
\def\lemmaname{Lemma}
\def\remarkname{Remark}
\def\conjecturename{Conjecture} 
\def\definitionname{Definition}
\def\exercisename{Exercise}
\def\examplename{Example}
\def\examplesname{Examples}
\def\problemname{Problem}
\def\problemsname{Problems}
\def\proofname{Proof}
\def\satzname{Satz} 
\def\koroname{Korollar}
\def\folgname{Folgerung}
\def\bemerkname{Bemerkung}
\def\aufgname{Aufgabe}
\def\beisname{Beispiel}
\def\beissname{Beispiele}
\def\bewname{Beweis}
\def\@thmcounter#1{\noexpand\arabic{#1}}
\def\@thmcountersep{}
\def\@begintheorem#1#2{\it \trivlist \item[\hskip 
\labelsep{\bf #1\ #2.\quad}]}
\def\@opargbegintheorem#1#2#3{\it \trivlist
      \item[\hskip \labelsep{\bf #1\ #2.\quad{\rm #3}}]}
\newtheorem{theor}{\theoremname}[section]
\newtheorem{propo}[theor]{\propositionname}
\newtheorem{coro}[theor]{\corollaryname}
\newtheorem{lemm}[theor]{\lemmaname}
\newenvironment{thm}{\begin{theor}\it}{\end{theor}}
\newenvironment{theorem}{\begin{theor}\it}{\end{theor}}
\newenvironment{prop}{\begin{propo}\it}{\end{propo}}
\newenvironment{cor}{\begin{coro}\it}{\end{coro}}
\newenvironment{lem}{\begin{lemm}\it}{\end{lemm}}
\newtheorem{rema}[theor]{\remarkname}
\newenvironment{rem}{\begin{rema}\rm}{\end{rema}}
\newtheorem{stepnow}[theor]{}
\newtheorem{defin}[theor]{\definitionname} %% write
\newenvironment{defn}{\begin{defin}\rm}{\end{defin}}
\newtheorem{exerc}{\exercisename}[section]
\newtheorem{exa}[theor]{\examplename}
\newenvironment{ex}{\begin{exa}\rm}{\end{exa}}
\newtheorem{exas}[theor]{\examplesname}
\newenvironment{exs}{\begin{exas}\rm}{\end{exas}}
\newtheorem{conj}[theor]{\conjecturename}
\newtheorem{pro}[theor]{\problemname}
\newtheorem{prs}[theor]{\problemsname}
\newenvironment{Proof*}{\begin{trivlist}\item[\hskip%
\labelsep{\bf\proofname.\quad}]}% 
{\end{trivlist}}
\newenvironment{prf}{\begin{proof}}{\end{proof}}
\qed\end{trivlist}}
\newenvironment{beweis*}{\begin{trivlist}\item[\hskip%
\labelsep{\bf\bewname.\quad}]}% 
{\end{trivlist}}
\newtheorem{satzn}[theor]{\satzname}
\newtheorem{koro}[theor]{\koroname}
\newtheorem{folg}[theor]{\folgname}
\newtheorem{bem}[theor]{\bemerkname}
\newtheorem{aufg}[theor]{\aufgname}
\newtheorem{aufgn}[theor]{\aufgname}
\newtheorem{beis}[theor]{\beisname}
\newtheorem{beiss}[theor]{\beissname}
\def\date{6.6.2014} 
\renewcommand{\phi}{\varphi} 
\newcommand{\di}{C^{-\infty}_c}
\def\sideremark#1{\ifvmode\leavevmode\fi\vadjust{\vbox to0pt{\vss% the remark
 \hbox to 0pt{\hskip\hsize\hskip1em%                          will appear only
\vbox{\hsize2cm\tiny\raggedright\pretolerance10000 %          on the side
 \noindent #1\hfill}\hss}\vbox to8pt{\vfil}\vss}}} %          in 2cm
\newcommand{\edz}[1]{\sideremark{#1}}
\numberwithin{equation}{section}
\begin{document} 

%%%%%%%%%%%%%%%%%%%%%%%%%%

\title{Integrability of unitary representations \\ 
on reproducing kernel spaces} 
\author{ St\'ephane Merigon, Karl-Hermann Neeb, Gestur \'Olafsson}

\maketitle

%{\bf This is {\tt lumaker.tex; \date}} 

\begin{abstract} Let $\fg$ be a Banach Lie algebra and $\tau : \fg \to \fg$ an involution. 
Write 
%Let $G$ be a Lie group
%with Lie algebra $\fg$ and $H$ the analytic subgroup with Lie algebra $\fh$. Write
$\fg=\fh\oplus \fq$ for the eigenspace decomposition of $\fg$ with respect to $\tau$ 
and $\g^c := \fh\oplus i\fq$ for the dual Lie algebra. 
In this article we show the integrability 
of two types of infinitesimally unitary representations of $\fg^c$.
The first class of representation is determined by a smooth positive definite kernel 
$K$ on a locally convex manifold $M$. The kernel is assumed to satisfying a natural invariance condition 
with respect to an infinitesimal action 
$\beta \: \g \to \cV(M)$ by locally integrable vector fields that is compatible with a 
smooth action of a connected Lie group $H$ with Lie algebra $\fh$.
The second class is constructed from a positive definite kernel 
corresponding to a positive definite distribution 
$K \in C^{-\infty}(M \times M)$ on a finite dimensional smooth manifold $M$ 
which satisfies a similar invariance condition with respect to 
a homomorphism $\beta \: \g \to \cV(M)$. 
%In both cases we assume that $\beta\res_{\fh}$ integrates to a smooth 
%action of a global Lie group~$H$ on $M$. 
As a consequence, we get 
%a new proof of integration of a local representations of $G$ in the finite dimensional case and 
a generalization of the L\"uscher--Mack Theorem
which applies to a class of semigroups that need not have a 
polar decomposition. Our integrability results 
also apply naturally to local representations and representations 
arising in the context of reflection positivity. 
%
%These results generalize the methods 
%used in ``S.~Merigon and\break K.-H. Neeb, 
%Analytic extension techniques for unitary representations of 
%Banach--Lie groups, Int. Math. Res. Notices {\bf 18} (2012), 4260--4300''. 
\end{abstract}

\section{Introduction} 

Let $G$ be a Banach--Lie group, let $\tau$ be an involutive automorphism of $G$ 
and let $H$ be an open subgroup of $G^\tau$. We call the triple 
$(G,H,\tau)$, respectively the pair $(G,\tau)$, a {\it symmetric Lie group}. 
We use the same notation for the involution induced on the Lie algebra 
$\g$ of $G$ and call $(\g,\fh,\tau)$, respectively $(\g,\tau)$, a {\it symmetric 
Lie algebra}. Write $\g = \fh \oplus \fq = \ker(\tau-\1) \oplus \ker(\tau+\1)$ 
for the eigenspace decomposition of $\g$ with respect to $\tau$. 
%We also write $\fg^\tau$ respectively $\fg^{-\tau}$ for the $+1$, respectively $-1$, 
% eigenspace of $\tau$ on $\fg$. 
We have  $[\fh ,\fh]\subeq \fh, [\fq,\fq]\subseteq \fh$ and
$[\fh,\fq]\subset \fq$. Thus
$\g^c := \fh\oplus i\fq$ is a Lie algebra and 
$\tau^c(x+iy) := x-iy$ leads to the symmetric Lie algebra $(\g^c,\tau^c)$, 
called the {\it $c$-dual} of $(\fg,\tau)$. Note that the $c$-dual 
of $(\g^c,\tau^c)$ is the original symmetric Lie algebra  $(\g,\tau)$. 
We denote by $G^c$ the simply connected Lie group with Lie algebra $\fg^c$.

When $\fg$ is a semisimple Lie algebra and
$\tau$ is a Cartan involution, then $c$-duality corresponds to the well-known duality 
between Riemannian symmetric spaces of non-compact type and compact 
type already studied by \'E.~Cartan almost a century ago. 

In this paper we address  the following 
{\it  integrability problem:} 
Suppose we are given a unitary representation $(\pi_H, \cH)$ of $H$, 
a subspace $\cD \subeq \cH$, and 
a representation $\beta_c \: \g^c \to \End(\cD)$ 
by skew-symmetric operators on $\cD$ such that 
$\beta_c(x) = \dd\pi_H(x)\res_{\cD}$ for $x \in \fh$. 
When is there a unitary representation $\pi^c$ of $G^c$ on $\cH$ 
with $\cD \subeq \cH^\infty$ and $\dd\pi^c(x) \res_\cD = \beta_c(x)$ for $x \in \g^c$? 
In this sense we are asking for integrability criteria for 
a representation of the pair $(\g^c,H)$ to a representation $\pi^c$ of the 
group $G^c$.  If the operators $i \beta_c(x)$, $x \in \fq$, are essentially selfadjoint, 
then $\pi^c$ is uniquely determined. 

Note that, for the one-dimensional Lie algebra $\g = \R$, $\tau = - \id$, and 
$H = \{\1\}$), 
the problem is equivalent to finding selfadjoint extensions of a symmetric operator. In 
the general case a crucial step is the passage from $(\g^c,H)$ to $G^c$ 
is the existence of selfadjoint extension of the operators 
$i\beta_c(y)$ for $y \in i\fq$, so that Stone's Theorem leads to corresponding 
unitary one-parameter groups. To achieve this step we shall 
use Fr\"ohlich's criterion for essential selfadjointness of a symmetric operator 
$A$ which derives this proprty from the existence of sufficiently many solutions 
of the ODE $\dot\gamma(t) = A\gamma(t)$ (\cite{Fro80}). In our context 
this is much more natural than to use Nelson's Criterion 
(\cite[Lemma~5.1]{Nel59}) which is based on the existence of analytic vectors. 
One can actually translate between these two perspectives because 
any analytic vector provides a convergent power series solving the ODE and, 
conversely, the solutions of the ODE are actually analytic on open 
intervals (cf.\ \cite{Sh84}). 

Our main motivation to study this kind of integrability problem 
comes from our recent work on reflection positive unitary representations 
of $(G,\tau$) by the last two authors (cf.\ \cite{NO13, NO14}). 
Originally, reflection positivity (also called Osterwalder--Schrader positivity) 
is part of a duality between euclidean quantum field 
theories and relativistic quantum field theories (\cite{OS73}). It can be realized
by analytic  continuation in the time variable from 
the real  to the imaginary axis. 
A central difficulty in this approach is to show that the infinitesimal  relativistic
system obtained form the euclidean one can be integrated to
the relativistic symmetry group, that is, that
the corresponding representation of the Lie algebra  of the
Poincar\'e group can be integrated to a unitary representation of the group. 
The duality between the Poincar\'e group and the
euclidean motion group is a special case of the $c$-duality introduced above. 

Another very interesting occurrence
of $c$-duality interesting for representation theory is obtained as follows.
Assume that $(G,H,\tau)$ is a simple symmetric Lie group and that $H$ is connected.  
Let $\theta$ be a Cartan involution commuting with $\tau$. 
% and $\fg=\fk\oplus \fs$ be the corresponding Cartan decomposition. 
We further assume that the center 
of $G^{\theta\tau}$ is one dimensional. Then $G^c$ is a hermitian
Lie group,  $\theta^c=\theta\tau$ is a Cartan involution on $G^c$ and 
$D:=G^c/K^c$, $K^c=(G^c)^{\theta^c}$, is a bounded symmetric domain.
Furthermore, $\tau$ defines an anti-holomorphic involution on
$D$ with fixed point set $D^\tau \cong H/(H\cap K^c)$.  The
compact dual $K/(K\cap H)$ of $D_\R$ turns out to
be an $R$-symmetric space (and all $R$-symmetric spaces
are obtained this way). Its conformal group is locally
isomorphic to $G$. 
In this case $c$-duality should lead, on the level of representations,
to a correspondence between the maximally degenerate principal
series of $G$ and the unitary highest weight representations 
of $G^c$. This has been discussed in \cite{JOl98,JOl00} for the
case where $D^c$ is a tube type domain and in
\cite{NO14,Sch86} for related special cases. 
When one considers infinite dimensional versions 
of the preceding groups, $c$-duality also plays 
a crucial role in the study of the unitary representations
of $H$ (\cite{Ol84, Ol90, Ne13, MN14}). 

In the literature one finds essentially three types of representations 
that lead by analytic continuation to a unitary representation 
of the simply connected Lie group $G^c$
with Lie algebra $\fg^c$: 
\begin{itemize}
\item[\rm(L)] local representations of $G$ for which the adjoint operation 
corresponds to $g \mapsto g^\sharp := \tau(g)^{-1}$ and 
the representation is global on $H$ (cf.~\cite{Jo86, Jo87}), 
\item[\rm(RP)] reflection positive representations of $G$ (\cite{JOl98,JOl00, NO13}), 
\item[\rm(LM)] involutive representation of a subsemigroup $S\subeq G$ 
with polar decomposition $S=H\exp W$, where $W\not=\eset$ is 
an $\Ad(H)$-invariant open convex cone in $\fq$ 
(the L\"uscher--Mack Theorem \cite{LM75, MN12}).
\end{itemize}
Each of these types leads more or less 
easily to a representation $(\beta_c, \pi_H)$ of $(\g^c,H)$ as above.

In this paper we develop a uniform approach to the 
integrability of such pairs $(\beta_c,\pi_H)$ that is based on reproducing kernel
techniques. It leads to much simpler proofs, 
applies to Banach--Lie groups, and for type (LM) gives rise
to a version of the L\"uscher--Mack Theorem that applies to
semigroups without a polar decomposition and even to representations 
$(\beta_c, \pi_H)$ realized in reproducing kernel spaces on open $H$-right invariant 
domains in $G$ which are not even semigroups, and this is crucial for the 
applications to type (RP) (cf.~Theorem \ref{thm:4.11}  and Example \ref{ex:3.15}). 
For type (L) it also allows us to avoid some technical 
conditions that were used in the definition of local representations in 
\cite{Jo86,Jo87}.

The key idea is to realize the representations $(\beta_c, \pi_H)$ in a 
geometric setting which is rich enough to imply integrability to a representation
of $G^c$. This is achieved by considering Hilbert spaces $\cH$ defined 
by a smooth reproducing kernel $K$ on a locally convex manifold 
$M$\begin{footnote}{This means a smooth manifold modeled on a locally convex 
space; see \cite{Ne06} for details.}  
\end{footnote}
and which are compatible with a {\it smooth action} $(\beta,\sigma)$ of 
$(\g,H)$, which means that 
$\sigma \: M \times H \to M$ is a smooth right action 
and $\beta \: \g \to \cV(M)$ a homomorphism of Lie algebras for which the map 
$\hat\beta \: \g \times M \to TM, (x,m) \mapsto \beta(x)(m)$ is smooth,  
$\dot\sigma(x) = \beta(x)$ for $x \in \fh$, and 
each vector field $\beta(x)$, $x \in \fq$, is locally integrable, i.e., 
generates a local flow. 
In this context the compatibility between $K$ and $(\beta,\sigma)$ 
can be expressed by 
\begin{equation}\label{E:kerssym}
\cL_{\beta(x)}^1 K = -\cL_{\beta(\tau x)}^2 K\quad\text{for}\quad x\in\g
\end{equation}
Here  $\cL^1$ and $\cL^2$ denote the Lie derivative with respect to
the first and second variable, respectively.
Using a geometric version of Fr\"ohlich's Theorem on selfadjoint 
operators and corresponding local linear semiflows \cite{Fro80}, see Theorems \ref{thm:2.4} and
\ref{T:Froelichgeo} in this article, we show that
this action leads to a representation of $\fg_\C$ on a dense domain $\cD$ in the corresponding 
reproducing kernel Hilbert space $\cH_K \subeq C^\infty(M)$ 
for which $\fg^c$ acts by essentially skew-adjoint operator. The proof that
this representation integrates to a unitary representation of $G^c$
then relies on the results in \cite{Mer11}.

If $M$ is a finite dimensional manifold, a 
similar but much more general situation arises when $K$ is a positive definite 
distribution on $M \times M$. It is remarkable that in this case one can 
prove integrability along the same lines as for the case when $K$ is a smooth kernel. 
Here typical examples are obtained from 
so-called reflection positive representations of $(G,S,\tau)$, where 
$S \subeq G$ is a $\sharp$-invariant open subsemigroup. 
If $\nu$ is a %an $H$-invariant 
reflection positive distribution vector, then 
\[K_\nu (f,g)=\langle \pi^{-\infty}(g * f^*)\nu, \nu\rangle 
\quad \mbox{ for } \quad f, g \in C^\infty_c(G) \]
defines a positive definite distribution on $S \times S$ to which our results apply 
(cf.\ \cite{NO14}). 

The article is organized as follows. To keep our setting as flexible as possible, we 
introduce in Section~\ref{sec:1} the concept of a 
locally integrable vector field on a locally convex manifold. 
By definition, a vector field is locally integrable if it is the velocity 
field of a local flow and the main point of Section~\ref{sec:1} is to show 
that any such vector field determines a unique maximal local flow. 
For Banach manifolds, every smooth vector field is locally integrable, but this 
is not the case for Fr\'echet spaces. 

In Section~\ref{sec:2}, Lie derivatives
of locally integrable vector fields are studied in more detail. The most important
result here is Lemma~\ref{lem:vectrafo}, asserting the compatibility of the adjoint 
representation of $\g$ and the transformation of the corresponding vector fields under flows. 
It will be used in a crucial way in the proof of the integrability theorems.

In Section~\ref{sec:2b} we use 
Fr\"ohlich's Theorem to show that a vector field that is symmetric with
respect to a smooth positive definite kernel $K$ on $M$ gives 
rise, by  its Lie derivative, to a selfadjoint operator on $\cH_K$. 

Section~\ref{sec:3} is the central section of this article. 
Here we prove  the integrability of the representation 
$(\beta_c, \pi_H)$ on the reproducing kernel space $\cH_K$ if 
\eqref{E:kerssym} is satisfied (Theorem~\ref{thm:4.8}). 
As a rather easy consequence, we obtain the following 
generalization of the L\"uscher--Mack Theorem: If 
$S = SH\subeq G$ is a $\sharp$-invariant subsemigroup with non-empty interior, then 
any smooth $*$-representation of $S$ ``extends analytically'' to a uniquely 
determined unitary representation of $G^c$. 

In Section~\ref{sec:4a} 
we first recall the definition of a local representation 
of a symmetric Lie group $(G,H,\tau)$ from \cite{Jo86, Jo87} 
and show in two different situations how it leads to a unitary representation
of $G^c$. In the fist case, where $G$ is finite dimensional, we 
%do not assume a global representation of $H$ and 
give a simplified proof of Jorgensen's integrability to $G^c$. 
In the second case $G$ can be a Banach--Lie group, we assume 
that the local representation is defined on an open $\1$-neighborhood 
$U$ with $U = UH$, and that the corresponding action map 
$U \times \cD \to \cH$ is smooth for some topology on $\cD$ for which the 
inclusion $\cD \to \cH$ is continuous. 
Then integrability can be derived from Theorem~\ref{thm:4.8}. 

In Section~\ref{sec:4} we assume that the manifold $M$ is finite dimensional, 
so that the concept of a distribution on $M$ is defined. Here our main 
result is a generalization of Theorem~\ref{thm:4.8} to the situation 
where the kernel $K$ is a distribution on $M \times M$ 
satisfying \eqref{E:kerssym} in a suitable sense 
(Theorem~\ref{thm:4.12}). Again, the key tool is a suitable 
version of Fr\"ohlich's Theorem in this context. 
This results applies immediately to 
reflection positive distributions, for which it leads to an important 
class of reflection positive representations of $(G,\tau)$, for which a 
corresponding unitary representation $\pi^c$ of $G^c$ exists.

\tableofcontents

\section{Local flows on locally convex manifolds} 
\mlabel{sec:1}

A smooth manifold $M$ modeled on a locally convex space 
is called a {\it locally convex manifold}. Throughout this section, all
manifolds are assumed to be locally convex. We denote by 
$\cV(M)$ the Lie algebra of smooth vector fields on $M$ 
(cf.\ \cite{Ne06}).  

In this subsection, we show that the well-known correspondence between 
smooth local flows and vector fields can be generalized to locally convex manifolds, 
the main difference being that not every vector field generates a local flow.

For a subset $\cD\subset \R\times M$, $x\in M$ and
$t\in\R$ let
\[I_m:= \{ t \in \R \: (t,m) \in \cD\} \quad \mbox{ and } \quad 
M_t := \{ m \in M \: (t,m) \in \cD\}.\]

\begin{defn} \mlabel{def:8.6.2.1}
Let $M$ be a smooth manifold. A {\it local flow on
$M$} is a smooth map
$\Phi\colon \cD \to M, (t,x) \mapsto \Phi_t(x)$,  
where $\cD \subeq \R \times M$ is an open subset containing
$\{0\} \times M$, such that
for each $x \in M$ the set $I_x$  is an interval containing $0$, and
\begin{equation}
  \label{eq:semprop}
\Phi_0 = \id_M \quad\hbox{ and } \quad \Phi_t \Phi_s(x) = \Phi_{s+t}(x) 
\end{equation}
hold for all $t,s,x$ for which both sides are defined. The maps
\[ \gamma_x\colon I_x \to M, \qquad t \mapsto \Phi_t(x) \] 
are called the {\it flow lines}. The flow $\Phi$ is
said to be {\it global} if $\cD = \R \times M$.
\end{defn}

\begin{rem}
For each $t \in \R$, the subset
$M_t\subeq M$ is open, the map 
\[ \Phi_t \: M_t \to M, \quad m \mapsto \Phi(t,m) \] 
is smooth and, for $\Phi_t(m) \in  M_{-t}$, we have 
$\Phi_{-t}(\Phi_t(m)) = m.$ 
\end{rem}

If  $\Phi\colon \cD \to M$ is a local flow, then 
its {\it velocity field} 
$$ X^\Phi(x) := \derat0 \Phi_t(x) = \alpha_x'(0) $$
is a smooth vector field on $M$. 
From \eqref{eq:semprop} 
we immediately obtain for $t \in \R$ 
\begin{equation}
  \label{eq:vecrel}
 T(\Phi_t) \circ X^\Phi\res_{M_t} = X^\Phi \circ \Phi_t.
\end{equation}

\begin{defn}\mlabel{def:1.2} Let $M$ be a smooth manifold.

(a) A smooth vector field $X \in \cV(M)$ is called 
{\it locally integrable} if it is the velocity field 
of some smooth local flow. 

(b) Let $I \subeq \R$ an open interval
containing~$0$. A differentiable map $\gamma\colon I \to M$ is
called an {\it integral
curve} of $X$ if
\begin{equation}
  \label{eq:intcurve}
 \gamma'(t) = X(\gamma(t)) \qquad \hbox{for each} \quad t \in I. 
\end{equation}

(c) 
If $J \supeq I$ is an interval containing $I$, then an integral
curve $\eta \: J \to M$ is called an {\it
extension} of $\gamma$ if
$\eta\res_I = \gamma$. An integral curve $\gamma$ is said to be {\it
maximal} if it has no proper extension.
\end{defn}

Note that \eqref{eq:intcurve} implies that the curve $\gamma'$ in $TM$ is
continuous and further that, if $\gamma$ is $C^k$, then $\gamma'$ is
also $C^k$. Therefore integral curves are automatically smooth.

\begin{ex} \mlabel{ex:countex} 
(cf.\ \cite[Example II.3.11]{Ne06}, [Ham82, 5.6.1]) 
We give examples of linear ODEs on Fr\'echet spaces 
for which multiple solutions to initial value problems exist and for 
which no local solutions exist. 

(a) We consider the Fr\'echet space $E := C^\infty([0,1],\R)$ of smooth 
functions on the closed unit interval, and 
the continuous 
linear operator $Df := f'$ on~$E$. We are asking for solutions of the 
initial value problem 
\begin{equation}
  \label{eq:dotg}
\dot\gamma(t) = D\gamma(t), \quad \gamma(0) = v_0, \quad 
\gamma \: I \subeq \R \to E. 
\end{equation}
It follows from E.~Borel's Theorem \cite{B95} that there exist smooth functions 
$f_+$ on $[1,\infty[$ and $f_-$ on $]-\infty,0]$ such that 
\[ f_+^{(j)}(1) = v_0^{(j)}(1) \quad \mbox{ and } \quad
f_-^{(j)}(0) = v_0^{(j)}(0) \quad \mbox{ for  } \quad j \in \N_0.\] 
Then 
\[ h(x) :=
\begin{cases} 
f_-(x) & \text{ for } x \leq 0 \\
v_0(x) & \text{ for } 0  \leq x \leq 1 \\ 
f_+(x) & \text{ for } x \geq 1   
\end{cases}\]
is a smooth function on $\R$. The curve 
\[  \gamma \: \R \to E, \quad \gamma(t)(x) := h(t + x)  \] 
satisfies $\gamma(0) = h\res_{[0,1]} = v_0$ and 
$\dot\gamma(t)(x) = h'(t + x) = \gamma(t)'(x) = (D \gamma(t))(x)$. It is clear that these 
solutions of \eqref{eq:dotg} 
depend on the choice of the extension $h$ of~$v_0$. 

(b) We now consider the same problem on the space 
$F := C^\infty(]0,1[,\R)$ with $Df = f'$. 
Then the arguments under (a) imply that the 
corresponding initial value problem 
for $v_0(x) := \frac{1}{x-x^2}$ has no solution in any 
open interval containing~$0$.
\end{ex}

As we have just seen, there neither 
is a general result on local existence nor 
uniqueness 
of flows of smooth vector fields on Fr\'echet manifolds. 
Therefore the remarkable 
point of the following lemma is the uniqueness assertion. 

\begin{lem}
  \mlabel{lem:8.6.2.3}
If $X \in \cV(M)$ is a locally integrable vector field, then 
the following assertions hold: 
\begin{itemize}
\item[\rm(i)] If $\Phi \: \cD \to M$ is a local flow on $M$ with 
$X^\Phi = X$, then the 
flow lines are integral curves of $X$. 
\item[\rm(ii)] For each open interval $I \subeq \R$ containing $0$ and 
$x \in M$, there is at most one integral curve $\gamma_x \: I \to M$ 
of $X$. 
\item[\rm(iii)] Two local flows 
$\Phi, \Psi \: \cD \to M$ with $X^\Phi = X^\Psi$ coincide. 
\item[\rm(iv)] Through each point $x \in M$, there exists a unique 
maximal integral curve $\gamma_x \: I_x \to M$ of $X$ with $0 \in I_x$ and 
$\gamma_x(0) = x$. All other integral curves 
$\gamma \: I \to M$ of $X$ with $\gamma(0) = x$ are restrictions of 
$\gamma_x$.
\end{itemize}
\end{lem}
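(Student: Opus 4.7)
The plan is to treat the four assertions in order, with (i) being essentially the definition, (ii) being the real work, and (iii), (iv) following as formal consequences.

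For (i), I would compute directly: if $\gamma_x(t)=\Phi_t(x)$ is a flow line, then
$\gamma_x'(t_0)=\derat 0 \Phi_{t_0+s}(x)=\derat 0 \Phi_s(\Phi_{t_0}(x))=X^\Phi(\Phi_{t_0}(x))=X(\gamma_x(t_0))$,
using the semigroup property \eqref{eq:semprop} and the definition of $X^\Phi$. So flow lines are integral curves.

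The core of the argument is (ii). Suppose $\gamma_1,\gamma_2\colon I\to M$ are integral curves of $X$ with $\gamma_1(0)=\gamma_2(0)=x$, and fix a local flow $\Phi$ realising $X=X^\Phi$. Set $J:=\{t\in I:\gamma_1(t)=\gamma_2(t)\}$; this is closed in $I$ by continuity, contains $0$, and I want to show it is open. By shifting, it suffices to show that two integral curves $\gamma_1,\gamma_2$ starting at a common point $y$ agree in some neighborhood of $0$. The trick is to compute, for $|s|$ small enough that $(-s,\gamma_i(s))\in\cD$, the derivative of $f_i(s):=\Phi_{-s}(\gamma_i(s))$ via the chain rule for a smooth map of two variables evaluated along the diagonal:
\begin{equation*}
f_i'(s_0)=-X\bigl(\Phi_{-s_0}(\gamma_i(s_0))\bigr)+T(\Phi_{-s_0})\bigl(\gamma_i'(s_0)\bigr)
=-X(f_i(s_0))+T(\Phi_{-s_0})(X(\gamma_i(s_0))).
\end{equation*}
Applying \eqref{eq:vecrel} with parameter $-s_0$ gives $T(\Phi_{-s_0})(X(\gamma_i(s_0)))=X(\Phi_{-s_0}(\gamma_i(s_0)))=X(f_i(s_0))$, hence $f_i'\equiv 0$. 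So $f_i(s)=f_i(0)=y$, i.e.\ $\gamma_i(s)=\Phi_s(y)$ for both $i$, and in particular $\gamma_1=\gamma_2$ locally. Since $I$ is connected, $J=I$. The main obstacle to watch out for is justifying the derivative of $(t,s)\mapsto\Phi_{-t}(\gamma_i(s))$ along the diagonal; this is standard calculus for $C^1$ maps into locally convex spaces, but it must be invoked carefully, and the openness of $\cD$ must be used to see that $f_i$ is defined on some neighborhood of $0$.

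Parts (iii) and (iv) are then formal. For (iii), for each $x\in M$ both $t\mapsto\Phi_t(x)$ and $t\mapsto\Psi_t(x)$ are integral curves through $x$ by (i), hence coincide on the connected interval $I_x$ by (ii); since $\Phi$ and $\Psi$ share the same domain $\cD$, they are equal. For (iv), let $\cI_x$ denote the family of all integral curves $\gamma\colon I\to M$ of $X$ with $0\in I$ and $\gamma(0)=x$; by (ii) any two members agree on the intersection of their domains, so the union $I_x:=\bigcup_{\gamma\in\cI_x} I$ is an open interval containing $0$ and the formula $\gamma_x(t):=\gamma(t)$ for any $\gamma\in\cI_x$ with $t\in I$ defines a well-defined integral curve on $I_x$, which is evidently maximal and contains every member of $\cI_x$ as a restriction.
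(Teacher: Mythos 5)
Your proposal is correct and follows essentially the same route as the paper: the key step in (ii) is the identical computation showing that $s\mapsto\Phi_{-s}(\gamma_i(s))$ has vanishing derivative via \eqref{eq:vecrel} (the paper phrases the connectedness argument as a contradiction with $\sup J$ rather than openness of $J$, but this is cosmetic), and (i), (iii), (iv) are handled the same way. The only detail worth making explicit is that the step from $f_i'\equiv 0$ to $f_i$ constant uses the local convexity of $M$, which the paper flags.
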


\begin{prf} (i) Let
 $\alpha_x\colon I_x \to M$ be a flow line and $s \in I_x$. For
sufficiently small $t\in \R$, we then have
$$ \alpha_x(s + t) = \Phi_{s+t}(x) = \Phi_t\Phi_s(x) =
\Phi_t(\alpha_x(s)),$$
so that taking derivatives in $t = 0$ leads to
$\alpha_x'(s) = X^\Phi(\alpha_x(s)).$

(ii) Let $\gamma,\eta \: I \to M$ be two integral
curves of $X$ with $\gamma(0) = \eta(0) = p$. The continuity of the
curves implies that
$$0\in J := \{ t \in I\: \gamma(t) = \eta(t)\}$$
is a closed subset of $I$. Suppose that there exists an 
element $0 < t_0 \in I \setminus J$. 
Then $s := \sup J < t_0$ and $x := \gamma(s) = \eta(s)$. 
We now consider the curve 
\[ \tilde\gamma(t) := \Phi_{-t}(\gamma(t+s)),\] 
defined on a small interval $I'$ containing $0$. 
Clearly, $\tilde\gamma(0) = \gamma(s) = \alpha_s(x)$. Moreover, we obtain 
with \eqref{eq:vecrel} 
\begin{align*}
\tilde\gamma'(t) 
&= -X(\tilde\gamma(t)) + T(\Phi_{-t})\gamma'(t+s)
= -X(\tilde\gamma(t)) + X(\Phi_{-t}(\gamma(t+s)))\\
&= -X(\tilde\gamma(t)) + X(\tilde\gamma(t)) = 0.
\end{align*}
This implies that $\tilde\gamma$ is constant (here we use the local 
convexity of $M$), so that 
$\Phi_{-t}(\gamma(t+s)) = x$ for 
$t \in [0,\eps]$ and some $\eps > 0$. 
If $\eps$ is sufficiently small, then 
$\Phi_{-t}$ is defined on $\gamma(t+s)$ for $0 \leq t \leq \eps$, 
and applying $\Phi_t$ on both sides leads to 
$\gamma(t+s) = \alpha_t(x)$. The same argument shows that 
$\eta(t+s) = \alpha_t(x)$ for $0 \leq t$ sufficiently small. 
This contradicts the definition 
of $s$ and thus shows that $\sup J = \sup I$. 
We likewise obtain $\inf J = \inf I$, and therefore $J = I$. 

(iii) follows immediately from (i) and (ii). 

(iv) If $\gamma \: I \to M$ and $\eta \: J \to M$ are 
integral curves of $X$ with $\gamma(0) = \eta(0) = x$, 
then (ii) implies that $\gamma\res_{I \cap J} = \eta\res_{I \cap J}$, 
so that both curves combine to an integral curve on the 
open interval $I \cup J$. 

Let $I_x \subeq \R$ be the union of all open intervals $I_j$
containing $0$ on
which there exists an integral curve $\gamma_j \: I_j \to M$ of $X$
with $\gamma_j(0) = x$. Then the preceding argument shows that
$$ \gamma(t) := \gamma_j(t) \quad \hbox{ for } \quad t \in I_j $$
defines an integral curve of $X$ on $I_x$, which is maximal by
definition. The uniqueness of the maximal integral curve 
follows from its definition.
\end{prf}

\begin{thm}
  \mlabel{thm:8.6.2.4}
Each locally integrable smooth vector field 
$X$ is the velocity field of a unique 
%maximal
local flow defined by
$$ {\cal D}_X  := \bigcup_{x \in M} I_x \times \{x\} \quad\hbox{ and }\quad \Phi_t(x) :=
\gamma_x(t) \quad \hbox{ for } \quad (t,x) \in {\cal D}_X, $$
where $\gamma_x \: I_x \to M$ is the unique maximal integral curve through $x \in M$.
\end{thm}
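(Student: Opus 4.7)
The plan is to construct $\Phi$ as stated, then verify in turn that $\cD_X$ is open, that $\Phi$ satisfies the semiflow relation, that $\Phi$ is smooth, and finally that $X^\Phi = X$. Uniqueness is essentially free: any local flow $\Psi$ with velocity field $X$ has flow lines that are integral curves of $X$ by Lemma~\ref{lem:8.6.2.3}(i), hence restrictions of the maximal ones, so $\Psi$ agrees with the $\Phi$ we build.

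First I would verify the semiflow property from uniqueness of maximal integral curves (Lemma~\ref{lem:8.6.2.3}(ii)). For $x \in M$ and $s \in I_x$, the curve $t \mapsto \gamma_x(s+t)$ is an integral curve of $X$ defined on $I_x - s$ with value $\gamma_x(s)$ at $t=0$, hence coincides with $\gamma_{\gamma_x(s)}$ on the intersection of domains. Maximality then yields $I_x - s \subeq I_{\gamma_x(s)}$ and $\gamma_x(s+t) = \gamma_{\gamma_x(s)}(t)$ whenever the left-hand side is defined, which is exactly \eqref{eq:semprop}. That $X^\Phi = X$ is then immediate from $\Phi_t(x) = \gamma_x(t)$ and $\gamma_x'(0) = X(x)$.

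The heart of the argument is openness of $\cD_X$ together with smoothness of $\Phi$. Here I would exploit the hypothesis: because $X$ is locally integrable, there exists some local flow $\Psi \: \cD \to M$ with $X^\Psi = X$; by Lemma~\ref{lem:8.6.2.3}(iii), $\Psi$ is the restriction of $\Phi$ to $\cD$, so on $\cD$ openness and smoothness are given. For a general point $(t_0,x_0) \in \cD_X$, say $t_0 > 0$, I would follow the compact curve segment $\gamma_{x_0}([0,t_0])$ and cover it by finitely many open sets on which $\Psi$ is defined for a uniform time. Concretely, pick an integer $n$ so large that for each $i = 0,1,\dots,n$ the point $p_i := \gamma_{x_0}(it_0/n)$ admits an open neighborhood $U_i$ and some $\delta > 0$ with $(-\delta,\delta) \times U_i \subeq \cD$ and with $\Psi_{t_0/n}(U_i) \subeq U_{i+1}$ for $i < n$; such a cover exists by continuity of $\gamma_{x_0}$, smoothness of $\Psi$, and compactness of $[0,t_0]$. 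Then on a neighborhood of $(t_0,x_0)$ of the form $(t_0 - \eps, t_0 + \eps) \times V$ with $V \subeq U_0$, the map
\[
(t,x) \;\longmapsto\; \Psi_{t/n} \circ \cdots \circ \Psi_{t/n}(x) \qquad (n \text{ factors})
\]
is defined, smooth, and an integral curve of $X$ through $x$ at time $0$ (by the semigroup property of $\Psi$). By uniqueness of integral curves it equals $\Phi_t(x)$, proving that $(t_0,x_0)$ is interior to $\cD_X$ and that $\Phi$ is smooth there. The case $t_0 < 0$ is analogous, and $t_0 = 0$ is trivial since $\{0\} \times M \subeq \cD \subeq \cD_X$.

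The main obstacle is this last compactness-and-composition step, because on locally convex (e.g.\ Fr\'echet) manifolds one has neither the classical ODE existence theory nor automatic openness of domains of flows — we must extract everything from the single given local flow $\Psi$ together with the uniqueness of integral curves. Once the finite composition formula above is in place, openness, smoothness, and the identification with the given flow fall out simultaneously; combined with the semigroup identity and the uniqueness assertions of Lemma~\ref{lem:8.6.2.3}, this completes the proof.
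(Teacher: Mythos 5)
Your proposal is correct, and its first half (semiflow property and $X^\Phi=X$ from uniqueness of maximal integral curves) is exactly the paper's argument. For the crucial openness/smoothness step the two proofs share the same engine --- propagate the smoothness of the given short-time flow $\Psi$ along a trajectory by composition --- but package it differently: the paper runs a connectedness/minimal-counterexample argument (the set $J_x$ of times $t$ for which $\Phi$ is smooth near $[0,t]\times\{x\}$ is open and closed in $I_x\cap\R_+$, the inductive step being the three-fold composition $\Phi_{t-\tau}\Phi_\eps\Phi_{\tau-\eps}$), whereas you use a compactness/uniform-subdivision argument with an $n$-fold composition $\Psi_{t/n}\circ\cdots\circ\Psi_{t/n}$. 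Both are standard and equally valid here; yours has the small advantage that the identity $\Psi_{t/n}^{\circ n}(x)=\gamma_x(t)$ follows directly from the already-established semiflow relation, so you need not separately verify that the composition is an integral curve (your parenthetical ``by the semigroup property of $\Psi$'' is best cashed out this way, rather than by differentiating the composition). One point to tighten: you arrange $\Psi_{t_0/n}(U_i)\subeq U_{i+1}$ only for the fixed time $t_0/n$, but you then evaluate the composition at variable $t\in(t_0-\eps,t_0+\eps)$; to guarantee the composition is defined there you should construct the $U_i$ by backward recursion, using continuity of $(s,y)\mapsto\Psi_s(y)$ at $(t_0/n,p_i)$ to get $\Psi_{t/n}(U_i)\subeq U_{i+1}$ for all $|t-t_0|<\eps$ simultaneously. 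This is routine and does not affect the validity of the approach.
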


\begin{prf} If $(s,x), \big(t,\Phi_s(x)\big)$ and $(s+t,x) \in {\cal D}_X$, the relation
$$ \Phi_{s+t}(x) = \Phi_t \Phi_s(x) \quad \hbox{ and } \quad
I_{\Phi_s(x)} = I_{\gamma_x(s)} = I_x - s $$
follow from the fact that both curves
$$ t \mapsto \Phi_{t+s}(x)= \gamma_x(t+s) \quad\hbox{ and }\quad t \mapsto \Phi_t\Phi_s(x)
 = \gamma_{\Phi_s(x)}(t) $$
are integral curves of $X$ with the initial value $\Phi_s(x)$, hence coincide.

We claim that all maps
\[ \Phi_t \: M_t = \{ m\in M \: (t,m) \in \cD\} \to M,
\quad x \mapsto \Phi_t(x)\] 
are injective. In fact, if $p :=
\Phi_t(x) = \Phi_t(y)$, then $\gamma_x(t) = \gamma_y(t)$, and on
$[0,t]$ the curves $s \mapsto \gamma_x(t-s), \gamma_y(t-s)$ are
integral curves of $-X$, starting in $p$. Hence 
Lemma~\ref{lem:8.6.2.3}(ii) implies that they coincide in $s = t$,
which means that $x = \gamma_x(0)  = \gamma_y(0) = y$. From this
argument it further follows that $\Phi_t(M_t) = M_{-t}$ and
$\Phi_t^{-1} = \Phi_{-t}$.

It remains to show that ${\cal D}_X$ is open and $\Phi$ smooth.
The local integrability of $X$ 
provides for each $x \in M$ an open neighborhood
$\cD_x$ and some $\eps_x > 0$, as well as a smooth map
$$ \phi_x\colon {}]-\eps_x, \eps_x[\, \times \cD_x \to M, \quad
\phi_x(t,y) = \gamma_y(t) = \Phi_t(y). $$
Hence
$]-\eps_x, \eps_x[\, \times \cD_x  \subeq {\cal D}_X$, and the restriction of $\Phi$
to this set is smooth.
Therefore $\Phi$ is smooth on a neighborhood of $\{0\} \times M$ in ${\cal D}_X$.

Now let $J_x$ be the set of all $t \in [0,\infty[$, for which
${\cal D}_X$ contains a neighborhood of $[0,t] \times \{x\}$ on which $\Phi$ is smooth.
The interval $J_x$ is open in $\R_+ := [0,\infty[$ by definition. We claim that
$J_x = I_x\cap \R_+$. This entails that ${\cal D}_X$ is open because the same argument applies to
$I_x \cap \,]-\infty,0]$.

We assume the contrary and find a minimal $\tau \in I_x \cap \R_+
\setminus J_x$, because this interval is closed. Put $p :=
\Phi_\tau(x)$ and pick a product set $I \times W \subeq {\cal D}_X$,
where $W$ is an open neighborhood of $p$ and $I = \,]-2\eps,2\eps[$ a
$0$-neighborhood, such that $2\eps < \tau$ and $\Phi : I \times W
\to M$ is smooth. By assumption, there exists an open neighborhood $V$ of
$x$ such that $\Phi$ is smooth on $[0,\tau-\eps{}] \times V \subeq
{\cal D}_X$. Then $\Phi_{\tau -\eps}$ is smooth on $V$ and
$$V' := \Phi_{\tau-\eps}^{-1}\big(\Phi_\eps^{-1}(W)\big) \cap V $$
is a neighborhood of $[0,\tau+\eps] \times \{x\}$ in ${\cal D}_X$.
Further,
$$V' = \Phi_{\tau-\eps}^{-1}\big(\Phi_\eps^{-1}(W)\big) \cap V
= \Phi_\tau^{-1}(W) \cap V, $$
and $\Phi$ is smooth on $V'$, because it is a composition of smooth maps:
\[  ]\tau - 2\eps, \tau + 2\eps[ \times V' \to M, \quad (t,y) \mapsto
\Phi_{t-\tau}\Phi_\eps\Phi_{\tau-\eps}(y). \]
We thus arrive at the contradiction $\tau \in J_x$.

This completes the proof of the openness of ${\cal D}_X$ and the smoothness
of~$\Phi$.
The uniqueness of the flow follows from the uniqueness of the integral 
curves (Lemma~\ref{lem:8.6.2.3}(ii)). 
\end{prf}

\begin{rem}
  \mlabel{rem:8.6.2.6}
Let $\Phi^X \: {\cal D}_X \to M$ be the maximal local flow of a
locally integrable 
vector field $X$ on $M$. Let $M_t = \{x \in M \: (t,x) \in {\cal
D}_X\}$, and observe that this is an open subset of $M$. We have
already seen in the proof of Theorem~\ref{thm:8.6.2.4} above that
the smooth maps $\Phi^X_t \: M_t \to M$ are injective with
$\Phi^X_{t}(M_{t}) = M_{-t}$ and $(\Phi^X_{t})^{-1} = \Phi^X_{-t}$
on the image. It follows in particular that $\Phi^X_{t}(M_t) =
M_{-t}$ is open and that
$\Phi^X_t \: M_t \to M_{-t}$
is a diffeomorphism whose inverse is $\Phi^X_{-t}$.
\end{rem}

\begin{rem} Suppose that 
$X \in \cV(M)$ is locally integrable and that 
$\phi \: M \to N$ is a diffeomorphism. 
Then the vector field $Y := \phi_*X$ on $N$ is also locally integrable 
and its local flow satisfies 
\[ \Phi^Y_t \circ \phi\res_{M_t} = \phi \circ \Phi^X_t\res_{M_t}.\] 
\end{rem}

\begin{prop} \mlabel{prop:lin} 
Let $V$ be a locally convex vector space and 
$D \: V \to V$ be a continuous linear map. 
If the corresponding smooth vector field 
$X(v) = Dv$ is locally integrable, then the corresponding local flow is 
global. 
\end{prop}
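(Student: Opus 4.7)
The plan is to exploit the linearity of $D$ together with the absorbing property of open $0$-neighborhoods in the locally convex space $V$: first I would show that there is a uniform time $\eps > 0$ such that $\,]-\eps,\eps[\, \subseteq I_v$ for every $v \in V$, and then bootstrap to global existence via the semigroup property.

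First, let $\Phi \: \cD_X \to V$ be the maximal local flow produced by Theorem~\ref{thm:8.6.2.4}. Since $\cD_X$ is open and contains $(0,0)$, there exist $\eps > 0$ and an open neighborhood $U$ of $0$ in $V$ with $\,]-\eps,\eps[\, \times U \subseteq \cD_X$. For an arbitrary $v \in V$, absorbing the $0$-neighborhood $U$ gives some $c > 0$ with $cv \in U$. I would then set $\gamma(t) := c^{-1}\Phi_t(cv)$ for $t \in \,]-\eps,\eps[$\,. Linearity of $D$ gives
\[
\gamma'(t) \;=\; c^{-1} D\Phi_t(cv) \;=\; D\gamma(t),
\]
and $\gamma(0) = v$. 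By the uniqueness of integral curves (Lemma~\ref{lem:8.6.2.3}(ii),(iv)), $\gamma$ is the restriction of $\gamma_v$ to $\,]-\eps,\eps[$\,, so $\,]-\eps,\eps[\, \subseteq I_v$ for every $v \in V$. In particular each $\Phi_t$ with $|t|<\eps$ is defined on all of $V$.

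Next, to pass to global existence, I would assume for contradiction that $b := \sup I_v < \infty$ for some $v \in V$. Pick $t_0 \in I_v$ with $b - t_0 < \eps$ and put $w := \Phi_{t_0}(v)$. By the previous paragraph, $\gamma_w$ is defined on $\,]-\eps,\eps[$\,, and the semigroup identity $\Phi_{t+t_0}(v) = \Phi_t(\Phi_{t_0}(v))$ from Theorem~\ref{thm:8.6.2.4} shows that the curve $t \mapsto \gamma_w(t-t_0)$ extends $\gamma_v$ to the interval $I_v \cup \,]t_0-\eps, t_0+\eps[$\,, which strictly contains $\,]a,b[$\,. This contradicts maximality of $I_v$. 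The symmetric argument at the left endpoint gives $I_v = \R$ for every $v \in V$, hence $\cD_X = \R \times V$ and the flow is global.

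I do not anticipate a real obstacle: the only point that requires care is invoking the uniqueness statement of Lemma~\ref{lem:8.6.2.3}, which is the non-trivial feature of locally convex integration theory and is available precisely by the hypothesis that $X$ is locally integrable.
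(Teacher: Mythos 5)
Your proof is correct and follows essentially the same route as the paper: both use the linearity of $D$ to propagate the uniform existence time $\eps$ from a $0$-neighborhood to all of $V$ (the paper notes that each $V_t$ is an open linear subspace, hence all of $V$; you use the equivalent absorbing-plus-scaling argument), and then conclude globality from the semigroup property. Your write-up actually spells out the rescaling of integral curves and the bootstrap step, which the paper leaves implicit.
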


\begin{prf} Suppose that $X$ is locally integrable. Then the domain 
$\cD \subeq \R \times V$ of its local flow contains a neighborhood 
$[-\eps,\eps] \times U_V$ of $(0,0)$. The linearity of the vector field 
further implies that each subset $V_t$, $t\in \R$, is a linear subspace of $V$, 
hence $V_t = V$ whenever $V_t \not=\eset$. We conclude that 
$[-\eps,\eps] \times V \subeq \cD$, and this implies that the flow it global. 
\end{prf}

\section{Lie Derivatives}
\mlabel{sec:2}

In this section we take a closer look at the interaction of local flows and vector
fields. Let $X \in {\cal V}(M)$ be locally integrable 
and let $\Phi^X \: {\cal D}_X \to M$ be its maximal
local flow. For  $f\in C^\infty(M)$ and $t\in \R$, we set
$$(\Phi^X_t)^*f:=f\circ \Phi^X_t\in C^\infty(M_t).$$
Then 
$$\lim_{t \to 0} \frac{1}{t}( (\Phi^X_{t})^*f-f)=\dd f(X)=\cL_Xf\in
C^\infty(M).$$ For a second vector field $Y \in {\cal V}(M)$, we
define a smooth vector field on the open subset $M_{-t} \subeq M$ by
$$ (\Phi^X_t)_* Y
:= T(\Phi^X_t) \circ Y \circ \Phi^X_{-t} = T(\Phi^X_t) \circ Y \circ
(\Phi^X_t)^{-1} $$ (cf.\ Remark~\ref{rem:8.6.2.6}) and define the
{\it Lie derivative} by 
\[  {\cal L}_X Y
:= \lim_{t \to 0} \frac{1}{t}( (\Phi^X_{-t})_* Y-Y)
= \derat0 (\Phi^X_{-t})_* Y, \]
which is defined on all of $M$ since, for each $p \in M$, the vector
$((\Phi^X_t)_*Y)(p)$
is defined for sufficiently small $t$ and depends smoothly
on $t$.

\begin{thm}
  \mlabel{thm:8.6.2.7}
${\cal L}_X Y = [X,Y]$ for $X,Y \in {\cal V}(M)$.
\end{thm}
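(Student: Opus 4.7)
The plan is to verify the identity pointwise by a local coordinate computation. Since both sides are smooth vector fields on $M$, it suffices to show that $(\cL_X Y)(p) = [X,Y](p)$ for an arbitrary $p \in M$. Fix such a $p$ and pick a chart around $p$ modeled on a locally convex space $E$; identify a neighborhood of $p$ with an open subset $U \subeq E$. In these coordinates, $X$ and $Y$ correspond to smooth $E$-valued maps on $U$, and by Theorem~\ref{thm:8.6.2.4} the local flow $\psi_t := \Phi^X_t$ is smooth on an open neighborhood of $\{0\}\times U$ in $\R\times U$, with $\psi_0 = \id_U$ and $\derat{0}\psi_t = X$.

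In these coordinates,
\[
((\psi_{-t})_*Y)(p) \;=\; D\psi_{-t}(\psi_t(p))\cdot Y(\psi_t(p))
\]
for $|t|$ sufficiently small. To differentiate at $t=0$, I would apply the product rule to the bilinear evaluation pairing $L(E,E)\times E \to E$, obtaining two contributions. Using $\psi_0 = \id$ (so $D\psi_0 \equiv \id_E$), the contribution from the second factor is $\id_E \cdot \derat{0} Y(\psi_t(p)) = DY(p)\cdot X(p)$ by the chain rule. The contribution from the first factor reduces, after another application of the chain rule, to $\bigl(\derat{0} D\psi_{-t}(p)\bigr)\cdot Y(p)$, because the derivative of the constant map $q \mapsto D\psi_0(q) = \id_E$ vanishes.

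The decisive step is the evaluation $\derat{0} D\psi_{-t}(p) = -DX(p)$. This is a symmetry of mixed partial derivatives of the smooth map $(t,q)\mapsto \psi_{-t}(q)$; in the framework of smooth calculus on locally convex spaces (see \cite{Ne06}), this exchange is valid since $\psi$ is smooth on the open domain $\cD_X$ guaranteed by Theorem~\ref{thm:8.6.2.4}. Combining the two pieces yields
\[
\cL_X Y(p) \;=\; DY(p)\cdot X(p) - DX(p)\cdot Y(p),
\]
which is precisely the local coordinate expression for $[X,Y](p)$. The main (modest) subtlety in the entire argument is this mixed-partial interchange; every other step is a routine application of the chain or product rule in the locally convex setting.
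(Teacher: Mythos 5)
Your proof is correct and takes essentially the same approach as the paper's: a local-chart computation using the product rule and the interchange of the $t$- and $x$-derivatives of the smooth flow. The only (harmless) variation is that you differentiate $t\mapsto D\psi_{-t}(p)$ directly to get $-DX(p)$, whereas the paper writes $D\psi_{-t}(\psi_t(p)) = \bigl(D\psi_t(p)\bigr)^{-1}$ and invokes a derivative-of-the-inverse formula; both steps rest on the same mixed-partial interchange.
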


\begin{prf} Fix $p \in M$. It suffices to show that ${\cal L}_X Y$ and $[X,Y]$ coincide in $p$.
We may therefore work in a local chart, hence assume that
$M = U$ is an open subset of some locally convex space~$E$. 

Identifying vector fields with smooth $E$-valued functions, we have
$$ [X,Y](x) = \dd Y(x) X(x) - \dd X(x) Y(x)\quad \mbox{ for } 
\quad x \in U. $$
On the other hand,
\begin{align*}
&((\Phi^X_{-t})_* Y)(x)
= T(\Phi^X_{-t}) \circ Y \circ \Phi^X_{t}(x) \cr
&= \dd (\Phi^X_{-t})(\Phi^X_{t}(x)) Y(\Phi^X_{t}(x))
= \big(\dd (\Phi^X_{t})(x)\big)^{-1} Y(\Phi^X_{t}(x)).
\end{align*}
To calculate the derivative of this expression with respect
to $t$, we first observe that it does not matter if we first
take derivatives with respect to $t$ and then with respect to $x$ or
vice versa. This leads to
$$ \derat0 \dd (\Phi^X_t)(x) =
\dd\Big(\derat0 \Phi^X_t\Big)(x) = \dd X(x). $$
Next we note that
for any curve
$\alpha \: [-\eps,\eps] \to \GL(V)$ with $\alpha(0) = \1$ 
for which the associated map 
\[ \hat\alpha \: [-\eps,\eps] \times V \to V \times V, \quad 
\hat\alpha(t,v) := (\alpha(t)v, \alpha(t)^{-1}v) \] 
is smooth, we  have
$$ (\alpha^{-1})'(t) = - \alpha(t)^{-1} \alpha'(t) \alpha(t)^{-1}, $$
and in particular $(\alpha^{-1})'(0) = - \alpha'(0)$.
Combining all this, we obtain with the Product Rule
\[{\cal L}_X(Y)(x)
=  -\dd X(x)Y(x) + \dd Y(x)X(x)
= [X,Y](x). \qedhere \]
\end{prf}

\begin{ex} If $V = C^\infty_c(]0,1[)$ and $Df = f'$, then for every 
$f \in V$, there exists a smooth integral curve 
$\gamma \: [-\eps,\eps] \to V$ with 
$\dot\gamma(t) = D \gamma(t)$, given by 
$\gamma(t)(s) = f(t+s)$ for $\supp(f) - t \subeq ]0,1[$, 
but the corresponding linear vector field $\bx(v) = Dv$ is not integrable 
since the corresponding flow is not defined on a neighborhood of $(0,0)$ 
in $\R \times V$ (cf.\ Proposition~\ref{prop:lin}). 
\end{ex}

\begin{defn}\mlabel{def:2.4}
A locally convex Lie algebra $\g$ is call {\it $\ad$-integrable} if 
all linear vector fields $X_x(y) :=[x,y]$  generate 
global flows. If $\g$ is $\ad$-integrable, then we set
\[e^{\ad x}:= \Phi_1^{X_x} \quad \mbox{ for } \quad x\in \g\, .\]
\end{defn}

\begin{lem}
  \mlabel{lem:vectrafo}
Let $\g$ be a locally convex
$\ad$-integrable Lie algebra. 
Let $\beta \: \g \to \cV(M)$ be a homomorphism of Lie algebras 
for which the corresponding map 
\[ \hat\beta \: \g \times M \to TM, \quad (x,m) \mapsto \beta(x)(m)\] 
is smooth. 
Suppose that $\beta(x)$ is locally integrable and write 
$\Phi^x$ for the corresponding local flow on $M$. 
Then 
\begin{equation}
  \label{eq:trafo}
(\Phi^x_{-t})_* \beta(y) = \beta(e^{t\ad x}y)\res_{M_{t}} 
\quad \mbox{ for } \quad y \in \g, t \in \R.
\end{equation}
\end{lem}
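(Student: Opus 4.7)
The plan is to prove the identity pointwise: for fixed $m \in M$ and $y \in \g$, I compare the two sides as curves in the tangent space $T_m M$ and show they solve the same linear ODE with the same initial datum. Set $I_m := \{ t \in \R \: m \in M_t\}$, the open interval containing $0$ on which $\Phi^x_t(m)$ is defined. Define
\[
u_y(t) := ((\Phi^x_{-t})_*\beta(y))(m) = T\Phi^x_{-t}\bigl(\beta(y)(\Phi^x_t(m))\bigr), \qquad v_y(t) := \beta(e^{t\ad x}y)(m);
\]
both take values in $T_m M$ and are smooth in $(t,y)$ (using smoothness of $\Phi^x$, of $\hat\beta$, and of $e^{t\ad x}$ guaranteed by $\ad$-integrability), linear in $y$, and satisfy $u_y(0) = v_y(0) = \beta(y)(m)$.

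The key step is to check that $u_y$ and $v_y$ satisfy the same differential equation. For $u_y$, I would use the group law $\Phi^x_{-(t+s)} = \Phi^x_{-t}\circ\Phi^x_{-s}$ to rewrite $u_y(t+s) = T\Phi^x_{-t}\bigl(((\Phi^x_{-s})_*\beta(y))(\Phi^x_t(m))\bigr)$ and differentiate at $s=0$; combined with Theorem \ref{thm:8.6.2.7} and the Lie algebra homomorphism property of $\beta$, this gives
\[
u_y'(t) = T\Phi^x_{-t}\bigl((\cL_{\beta(x)}\beta(y))(\Phi^x_t(m))\bigr) = T\Phi^x_{-t}\bigl(\beta([x,y])(\Phi^x_t(m))\bigr) = u_{[x,y]}(t).
\]
For $v_y$, since $\ad(x)$ commutes with $e^{t\ad x}$, a direct computation yields $v_y'(t) = \beta([x, e^{t\ad x}y])(m) = v_{[x,y]}(t)$.

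To conclude, set $\delta_y(t) := u_y(t) - v_y(t)$, a curve linear in $y$, vanishing at $t=0$, and satisfying $\delta_y'(t) = \delta_{[x,y]}(t)$. I would pass to the twisted curve $\tilde\delta_y(t) := \delta_{e^{-t\ad x}y}(t)$: by the chain rule, linearity of $\delta_y$ in $y$, and $\tfrac{d}{dt}(e^{-t\ad x}y) = -[x, e^{-t\ad x}y]$, its derivative is
\[
\tfrac{d}{dt}\tilde\delta_y(t) = \delta_{[x,\,e^{-t\ad x}y]}(t) + \delta_{-[x,\,e^{-t\ad x}y]}(t) = 0.
\]
Hence $\tilde\delta_y$ is constant equal to $\tilde\delta_y(0) = 0$, and since $y \mapsto e^{-t\ad x}y$ is a linear automorphism of $\g$, this forces $\delta \equiv 0$, giving $u_y = v_y$ on $I_m$. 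The main obstacle is that the usual Picard uniqueness for linear ODEs is unavailable in a general locally convex setting, so the conjugation by $e^{-t\ad x}$ is essential: it reduces the proof to the elementary fact that in any locally convex space a smooth curve with identically vanishing derivative is constant.
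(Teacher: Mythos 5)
Your proof is correct and is essentially the paper's argument in a slightly different packaging: the twisted curve $\tilde\delta_y(t)$ you introduce at the end equals $\gamma(-t)-\gamma(0)$ for the paper's curve $\gamma(t) := \big((\Phi^x_t)_*\beta(e^{t\ad x}y)\big)(m)$, and showing $\tilde\delta_y'=0$ amounts to exactly the same cancellation between $\cL_{\beta(x)}\beta(z)=\beta([x,z])$ (Theorem~\ref{thm:8.6.2.7} plus the homomorphism property) and $\frac{d}{dt}e^{t\ad x}y=[x,e^{t\ad x}y]$. The only organizational difference is that you first record the two ODEs $u_y'=u_{[x,y]}$ and $v_y'=v_{[x,y]}$ separately before conjugating by $e^{-t\ad x}$, whereas the paper differentiates the combined curve directly; both correctly rely on constancy of curves with vanishing derivative in a locally convex space rather than on Picard uniqueness.
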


\begin{prf} Pick $m \in M$ and assume that $\Phi^x_{-t}(m)$ is defined 
in $t_0$. For the curve 
\[ \gamma(t) := \big((\Phi_t^x)_*\beta(e^{t\ad x}y)\big)(m) 
= T(\Phi^x_t) \beta(e^{t \ad x}y)(\Phi^x_{-t}(m)) \] 
our smoothness assumption on $\beta$ implies with 
Theorem~\ref{thm:8.6.2.7} that 
\begin{align*}
\gamma'(t_0) 
&= \derat{t_0} \big((\Phi_t^x)_*\beta(e^{t_0\ad x}y)\big)(m) 
+ \derat{t_0} \big((\Phi_{t_0}^x)_*\beta(e^{t\ad x}y)\big)(m) \\
&= \Big((\Phi^x_{t_0})_* \cL_{-\beta(x)} \beta(e^{t_0\ad x}y)\Big)(m) 
+ \big((\Phi_{t_0}^x)_*\beta([x, e^{t_0\ad x}y])\big)(m) \\
&= \Big( (\Phi^x_{t_0})_*\big(
[-\beta(x),\beta(e^{t_0\ad x}y)]+ \beta([x, e^{t_0\ad x}y])\big)\Big)(m) 
=0.
\end{align*}
We conclude that $\gamma$ is constant, and hence that, 
whenever $\Phi^x_{-t}(m)$ is defined, we have 
\[ T(\Phi^x_t)\beta(e^{t\ad x}y)(\Phi^x_{-t}(m)) = \beta(y)(m).\] 
This leads to 
\[ \beta(e^{t\ad x}y)(m) = T(\Phi^x_{-t})\beta(y)(\Phi^x_t(m)) 
= \big((\Phi^x_{-t})_*\beta(y)\big)(m) \quad \mbox{ for } \quad m \in M_t,\] 
so that \eqref{eq:trafo} is proved. 
\end{prf}

\section{Vector fields and positive definite kernels} 
\mlabel{sec:2b}

In this section $M$ is a smooth locally convex manifold 
and we study the interaction between a
 smooth positive definite kernel $K \: M \times M \to \C$ and vector fields. 
Our main result is the Geometric Fr\"ohlich Theorem (Theorem~\ref{T:Froelichgeo})  
which implies in particular that for a locally integrable smooth vector field 
$X \in \cV(M)$ the Lie derivative defines a selfadjoint operator on 
the corresponding reproducing kernel space~$\cH_K$.

\begin{defn} Let $K \in C^\infty(M \times M,\C)$ be a smooth positive definite 
kernel. A vector field $X \in \cV(M)$ is said to be 
{\it $K$-symmetric} if 
\[ \cL_X^1 K = \cL_X^2 K \] 
and 
{\it $K$-skew-symmetric} if 
\[ \cL_X^1 K = -\cL_X^2 K. \] 
Here the superscripts indicate whether the Lie derivative acts on the 
first or the second argument. 
\end{defn}

In the following we assume that $K$ is a smooth positive definite 
kernel on $M$ and write $\cH_K \subeq \C^M$ for the corresponding reproducing 
kernel Hilbert space (see Appendix~\ref{app:d} for details). 
Then the functions $K_m := K(\cdot,m)$, $m \in M$, span a dense subspace 
$\cH_K^0$ and we have 
\[ f(m) = \la f, K_m \ra \quad \mbox{ for } \quad f\in \cH_K, m \in M\] 
(cf.\ Definition~\ref{def:1.5}). 
According to \cite[Thm.~7.1]{Ne10}, the smoothness of $K$ implies that the map 
\[ M\to \cH_K,\quad m\to  K_m\, ,\]
is smooth, so that $\phi(m) = \la \phi, K_m\ra$ 
for $\phi \in \cH_K$ implies that $\cH_K \subeq C^\infty(M,\C)$. 
The Lie derivative defines a representation of $\cV(M)$ on $C^\infty(M,\C) \supeq \cH_K$. 
For every smooth vector field $X \in \cV(M)$, we therefore define  
\[ \cD_X := \{ \phi \in \cH_K \: \cL_X \phi \in \cH_K\}, \quad 
\cL_X^K := \cL_X\res_{\cD_X} \: \cD_X \to \cH_K.\] 

\begin{lem} \mlabel{lem:ode} 
If $\gamma \: [a,b] \to M$ is an integral curve of $X$ and 
$\cL_X^1 K = \eps \cL_X^2 K$, $\eps \in \{\pm 1\}$, then the curve
 $\eta(t) := K_{\gamma(t)}$ in $\cH_K$ is smooth, contained in $\cD_X$, 
and satisfies the differential equation 
\begin{equation}
  \label{eq:diffeq}
\eta'(t) = \eps \cL_X^K \eta(t).
\end{equation}
\end{lem}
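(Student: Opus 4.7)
The plan is to exploit the reproducing property together with the smoothness of $m \mapsto K_m$, reducing the differential equation in $\cH_K$ to a pointwise computation that is resolved by the symmetry hypothesis.

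First, I would establish smoothness of $\eta$. By the cited result \cite[Thm.~7.1]{Ne10}, the map $\iota \: M \to \cH_K$, $m \mapsto K_m$, is smooth. Since $\gamma$ is an integral curve of a smooth vector field, it is smooth, and hence $\eta = \iota \circ \gamma \: [a,b] \to \cH_K$ is smooth as a curve into $\cH_K$. In particular, the derivative $\eta'(t) \in \cH_K$ exists and is given by the chain rule $\eta'(t) = T\iota(\gamma(t))\gamma'(t) = T\iota(\gamma(t))X(\gamma(t))$.

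The next step is to compute $\eta'(t)$ pointwise as a function on $M$. Using the reproducing property, for every $n \in M$,
\[
\eta'(t)(n) = \langle \eta'(t), K_n\rangle = \frac{d}{dt}\langle K_{\gamma(t)}, K_n\rangle = \frac{d}{dt}K(n,\gamma(t)) = (\cL_X^2 K)(n,\gamma(t)),
\]
where in the last equality I use that $\gamma'(t) = X(\gamma(t))$ and that differentiation in the second variable of $K$ along $X$ at $\gamma(t)$ is exactly the Lie derivative $\cL_X^2$ evaluated there. On the other hand, by definition of the Lie derivative applied to the function $K_{\gamma(t)} = K(\cdot,\gamma(t))$,
\[
(\cL_X K_{\gamma(t)})(n) = (\cL_X^1 K)(n,\gamma(t)).
\]

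The symmetry hypothesis $\cL_X^1 K = \eps \cL_X^2 K$ (equivalently, $\cL_X^2 K = \eps \cL_X^1 K$, since $\eps^2 = 1$) therefore gives the pointwise identity
\[
\eta'(t)(n) = \eps (\cL_X K_{\gamma(t)})(n) \qquad\text{for all } n \in M.
\]
Since $\eta'(t) \in \cH_K$ by the first step, it follows that the function $\cL_X K_{\gamma(t)} = \eps\, \eta'(t)$ also lies in $\cH_K$. This is exactly the statement that $\eta(t) = K_{\gamma(t)} \in \cD_X$, and we obtain the desired differential equation $\eta'(t) = \eps\, \cL_X^K \eta(t)$.

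There is no serious obstacle here; the only point requiring care is keeping track of which slot of $K$ the Lie derivative acts on, because the reproducing property identifies $\eta'(t)$ with differentiation in the second variable, whereas $\cL_X^K$ acts on functions in $\cH_K$, i.e.\ on the first variable of $K_m$. The symmetry hypothesis is what bridges these two and simultaneously certifies membership in $\cD_X$.
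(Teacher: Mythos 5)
Your proof is correct and follows essentially the same route as the paper's: smoothness of $\eta$ via the smoothness of $m\mapsto K_m$ composed with $\gamma$, then the reproducing-property computation $\eta'(t)(n)=\frac{d}{dt}K(n,\gamma(t))=(\cL_X^2K)(n,\gamma(t))=\eps(\cL_X^1K)(n,\gamma(t))=\eps(\cL_XK_{\gamma(t)})(n)$, from which both membership in $\cD_X$ and the differential equation follow at once. Nothing is missing.
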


\begin{prf} The smoothness of the map $M \to \cH_K, m \mapsto K_m$, and the smoothness 
of $\gamma$ imply that $\eta$ is smooth. We further have, for $m \in M$, 
\begin{align*}
\eta'(t)(m) 
&= \la  \eta'(t), K_m \ra 
= \frac{d}{dt} K(m, \gamma(t)) 
= \cL_X^2 K(m, \gamma(t)) \\
&=  \eps \cL_X^1 K(m, \gamma(t)) 
= \eps \cL_X K_{\gamma(t)}(m).
\end{align*}
This implies \eqref{eq:diffeq}. 
We conclude in particular that $\eta(t) \in \cD_X$, and the assertion follows. 
\end{prf}

\begin{prop} \mlabel{prop:graphclosed} Suppose that $X \in \cV(M)$ satisfies 
$\cL_X^1 K = \eps \cL_X^2 K$ for $\eps \in \{\pm 1\}$ and that 
integral curves through every $m \in M$ exist. 
Then the following assertions hold:  
\begin{enumerate}
 \item[\rm(i)] $\cH_K^0 \subeq \cD_X$.
 \item[\rm(ii)] If $X$ is symmetric (resp. skew-symmetric), then $\cL_X^K|_{\cH_k^0}$ is
 a symmetric (resp. skew-symmetric) operator.
\item[\rm(iii)] $\cL_X^K$ is a closed operator.
\item[\rm(iv)] If $\cL_X^K|_{\cH_k^0}$ is
 essentially selfadjoint (resp.~skew-adjoint), then $\cL_X^K=\oline{\cL_X^K|_{\cH_k^0}}$.
 \end{enumerate}
\end{prop}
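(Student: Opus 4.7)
The plan is to base all four parts on Lemma~\ref{lem:ode}, which supplies, for any $m \in M$ and any integral curve $\gamma$ of $X$ through $m$, a smooth curve $\eta(t) = K_{\gamma(t)}$ in $\cH_K$ satisfying $\eta'(t) = \eps\, \cL_X K_{\gamma(t)}$. Parts (i) and (ii) would then drop out immediately: since $\eta'(0) \in \cH_K$ and $\cL_X K_m = \eps\,\eta'(0)$, one has $K_m \in \cD_X$ for every $m$, hence $\cH_K^0 \subseteq \cD_X$; and unpacking $\langle \cL_X K_m, K_{m'}\rangle = (\cL_X^1 K)(m', m)$ against $\langle K_m, \cL_X K_{m'}\rangle = (\cL_X^2 K)(m', m)$ by means of the Hermiticity $K(m,m') = \overline{K(m',m)}$, one reads off (skew-)symmetry of $\cL_X^K|_{\cH_K^0}$ from the hypothesis $\cL_X^1 K = \eps\, \cL_X^2 K$.

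For (iii) I would reduce closedness to a one-dimensional computation along integral curves. Given $(\phi_n) \subseteq \cD_X$ with $\phi_n \to \phi$ and $\cL_X \phi_n \to \psi$ in $\cH_K$, fix $m \in M$, pick an integral curve $\gamma$ through $m$, and integrate the identity
\[ \tfrac{d}{ds}\phi_n(\gamma(s)) = (\cL_X \phi_n)(\gamma(s)) = \langle \cL_X \phi_n, K_{\gamma(s)}\rangle \]
over a short interval $[0,t]$. The left side tends pointwise to $\phi(\gamma(t)) - \phi(m)$ via $\phi_n(p) = \langle \phi_n, K_p\rangle$. On the right, continuity of $s \mapsto K_{\gamma(s)}$ on the compact interval $[0,t]$ keeps $\sup_s \|K_{\gamma(s)}\|$ finite, so Cauchy--Schwarz upgrades the norm convergence $\cL_X\phi_n \to \psi$ to uniform convergence of the integrands $\langle \cL_X\phi_n, K_{\gamma(s)}\rangle \to \psi(\gamma(s))$, letting us pass to the limit under the integral. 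The resulting identity $\phi(\gamma(t)) = \phi(m) + \int_0^t \psi(\gamma(s))\,ds$ is differentiable at $t=0$ and yields $(\cL_X\phi)(m) = \psi(m)$. Since $m$ is arbitrary, $\cL_X \phi = \psi \in \cH_K$, so $\phi \in \cD_X$ and $\cL_X^K \phi = \psi$. This interchange of limit and derivative, handled by integrating first and differentiating afterwards, is the one genuinely nontrivial step of the proposition.

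For (iv), set $A := \cL_X^K|_{\cH_K^0}$. Differentiating $\phi(\gamma(t)) = \langle \phi, K_{\gamma(t)}\rangle$ at $t=0$ and applying Lemma~\ref{lem:ode} gives the pointwise formula $(\cL_X \phi)(m) = \eps\, \langle \phi, \cL_X K_m\rangle$ for every $\phi \in \cD_X$. Taking linear combinations in $m$ delivers $\langle \cL_X^K \phi, \psi\rangle = \eps\, \langle \phi, A\psi\rangle$ for $\phi \in \cD_X$ and $\psi \in \cH_K^0 = \mathrm{dom}(A)$, i.e.\ $\cL_X^K \subseteq \eps\, A^*$. Under essential selfadjointness ($\eps = +1$) or essential skew-adjointness ($\eps = -1$) of $A$ one has $\eps A^* = \overline{A}$; combined with $\overline{A} \subseteq \cL_X^K$, which holds because (iii) makes $\cL_X^K$ a closed extension of $A$, this sandwich forces $\cL_X^K = \overline{A}$, which is the assertion of (iv).
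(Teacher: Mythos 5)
Your proof is correct and follows essentially the same route as the paper: parts (i), (ii) and (iv) coincide in substance with the paper's arguments, resting on Lemma~\ref{lem:ode}, the Hermiticity of $K$, and the inclusion chain $\oline{\cL_X^K|_{\cH_K^0}} \subeq \cL_X^K \subeq \eps\, (\cL_X^K|_{\cH_K^0})^*$. The only (cosmetic) divergence is in (iii), where the paper differentiates directly --- it establishes $(\cL_X\phi)(m) = \eps\la \phi, \cL_X^K K_m\ra$ for all $\phi \in \cH_K$ and reads off pointwise convergence of $\cL_X f_n$ from norm convergence of $f_n$ --- whereas your integrate-then-differentiate argument along integral curves is a correct, slightly longer rendering of the same idea.
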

 
\begin{prf}(i) The relation $\cH_K^0 \subeq \cD_X$ follows directly 
from Lemma~\ref{lem:ode}.\\ 
(ii) Since $K(m,n)=\oline{K(n,m)}$, we have
\begin{align*}
\la\cL_X K_m,K_n\ra & =\cL^1_X K(n,m)=\eps \cL_X^2 K(n,m)
=\eps\oline{\cL_X^1 K(m,n)}\\
&=\eps\oline{\la\cL_X K_n,K_m\ra}=
\eps\la K_m,\cL_X K_n\ra. 
\end{align*}
(iii) To see that $\cL_X^K$ is closed, assume that 
$(f_n, \cL_X^K f_n) \to (f,g)$ in $\cH_K\times \cH_K$. Then we have, for every $m \in M$, any 
integral curve $\gamma_m$ through $m$, and $\phi \in \cH_K$, the relation 
\begin{equation}\label{eq:opclosed}
(\cL_X \phi)(m) 
= \derat0 \phi\circ \gamma_m 
= \derat0 \la \phi, K_{\gamma_m} \ra 
= \eps \la \phi, \cL_X^K K_m \ra.
\end{equation}

In particular, $f_n \to f$ implies that 
$\cL_X f_n \to \cL_X f$ pointwise on $M$, and since 
we also have 
$(\cL_X^K f_n)(m) = \la \cL_X^K f_n, K_m \ra \to \la g, K_m \ra = g(m)$, 
it follows that $g = \cL_X f$. This implies that 
$f \in \cD_X$ and $g = \cL_X^K f$.\\ 
(iv) Let $T=\oline{\cL_X^K|_{\cH_k^0}}$. Then our assumption implies 
$T=\eps (\cL_X^K|_{\cH_k^0})^*$. 
Since $\cL_X^K$ is closed, we have $T\subeq  \cL_X^K$.
Conversely,  \eqref{eq:opclosed} shows that
$\cL_X^K \subeq \eps (\cL_X^K\res_{\cH_K^0})^*$. Hence $\cL_X^K=T$. 
\end{prf}

\begin{lem} \mlabel{lem:3.2} Suppose that $X \in \cV(M)$ has the 
property there exists a smooth $\R$-action 
$\sigma \: \R\times M \to M$ for which the orbit curves 
$t\mapsto \sigma_t(m):=\sigma (t,m)$ are integral curves of $X$. 
If $X$ is $K$-skew-symmetric, then 
\[ U_t \phi := \phi \circ \sigma_t \] 
defines a continuous unitary representation on $\cH_K$ with 
\begin{equation}
  \label{eq:kerinv}
  U_t K_m = K_{\sigma_{-t}(m)}. 
\end{equation}
In particular, $\cH_K^0$ consists of smooth vectors for $U$. 
Moreover, the infinitesimal generator of $U$ coincides with 
$\cL_X^K$ and $\cH_K^0$ is a core for $\cL_X^K$.
\end{lem}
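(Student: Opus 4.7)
The plan is to establish the unitary representation via an explicit $\sigma$-invariance property of the kernel, and then to identify its infinitesimal generator with $\cL_X^K$ by combining the ODE from Lemma~\ref{lem:ode} with Proposition~\ref{prop:graphclosed}(iv).

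The first step is to show that $K$ is $\sigma$-invariant, i.e.\ $K(\sigma_t(m),\sigma_t(n))=K(m,n)$ for all $t\in\R$ and $m,n\in M$. Fixing $m,n$ and differentiating the curve $t\mapsto K(\sigma_t(m),\sigma_t(n))$ yields, via the chain rule and the hypothesis that orbit curves are integral curves of $X$,
\[
\tfrac{d}{dt}K(\sigma_t(m),\sigma_t(n)) = (\cL_X^1 K)(\sigma_t m,\sigma_t n) + (\cL_X^2 K)(\sigma_t m,\sigma_t n) = 0
\]
by $K$-skew-symmetry. A direct computation then gives $(U_t K_m)(n) = K(\sigma_t(n),m) = K(n,\sigma_{-t}(m))$, i.e.\ the formula \eqref{eq:kerinv}. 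From this identity one reads off that $U_t$ preserves the inner product on $\cH_K^0$ (since $\langle U_tK_m,U_tK_n\rangle = K(\sigma_{-t}(n),\sigma_{-t}(m))=\langle K_m,K_n\rangle$), so $U_t$ extends by density to an isometry on $\cH_K$, with inverse $U_{-t}$ coming from the group law $\sigma_t\sigma_s=\sigma_{t+s}$. Thus each $U_t$ is unitary and $U$ is a one-parameter group.

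For strong continuity it suffices, by density of $\cH_K^0$ and uniform boundedness, to check continuity of $t\mapsto U_tK_m = K_{\sigma_{-t}(m)}$; this follows at once from smoothness of $\sigma$ and of the map $M\to\cH_K,\ m\mapsto K_m$ (recalled from \cite[Thm.~7.1]{Ne10}). The same smoothness shows that $t\mapsto U_tK_m$ is in fact a smooth $\cH_K$-valued curve, so every $K_m$ lies in $\cH^\infty$ and hence $\cH_K^0$ consists of smooth vectors for $U$.

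To identify the generator $A$ of $U$ (so $U_t=e^{tA}$ with $A$ skew-adjoint) with $\cL_X^K$, I would apply Lemma~\ref{lem:ode} to the integral curve $\gamma(t)=\sigma_{-t}(m)$ of $-X$, which is again $K$-skew-symmetric, to obtain
\[
A K_m = \tfrac{d}{dt}\Big|_{t=0} U_t K_m = \tfrac{d}{dt}\Big|_{t=0} K_{\sigma_{-t}(m)} = \cL_X^K K_m,
\]
so $A$ and $\cL_X^K$ agree on $\cH_K^0$. Because $U_t(\cH_K^0)\subeq \cH_K^0$ by \eqref{eq:kerinv}, the space $\cH_K^0$ is a dense $U$-invariant subspace of $\Dom(A)$, hence a core for $A$; thus $\cL_X^K|_{\cH_K^0}$ is essentially skew-adjoint with closure $A$. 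Proposition~\ref{prop:graphclosed}(iv) then yields $\cL_X^K = \overline{\cL_X^K|_{\cH_K^0}} = A$, which simultaneously gives the identification of the infinitesimal generator and confirms that $\cH_K^0$ is a core. The only delicate point is making sure that the pointwise derivative computed above really coincides with the $\cH_K$-norm derivative; this is handled by the smoothness of $m\mapsto K_m$ together with Lemma~\ref{lem:ode}, so no further approximation argument is needed.
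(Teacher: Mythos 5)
Your proposal is correct and follows essentially the same route as the paper: differentiate $t\mapsto K(\sigma_t(m),\sigma_t(n))$ to get $\sigma$-invariance of $K$ and hence \eqref{eq:kerinv}, conclude unitarity and smoothness of the orbit maps of $K_m$, and then identify the generator with $\cL_X^K$ via the invariant-dense-subspace core argument combined with Proposition~\ref{prop:graphclosed}(iv). The only cosmetic difference is that you verify unitarity of $U_t$ on $\cH_K$ by hand where the paper cites \cite[Prop.~II.4.9]{Ne00}.
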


\begin{prf} The invariance of $K$ under the flow $\sigma$ follows immediately 
from the $K$-skew-symmetry of $K$, which shows that, for $m,n \in M$, the functions 
$t \to K(\sigma_t(m), \sigma_t(n))$ are constant. This implies that 
$K_m \circ \sigma_t = K_{\sigma_{-t}(m)}$, which is \eqref{eq:kerinv}. 
The invariance of $\cH_K$ under $\sigma$ and that $U$ is a 
unitary representation follows from \cite[Prop.~II.4.9]{Ne00}. 
As $\cH_K^0$ is $U$-invariant, Stone's Theorem 
implies that the infinitesimal generator $T$ of $U$ coincides with the closure 
of $T\res_{\cH_K^0}$. Since on $\cD(T)$, and in particular
on $\cH_K^0$, $T$ and $\cL_X^K$ coincide, the last assertion 
follows from Proposition~\ref{prop:graphclosed}(iv).
\end{prf}

Below we recall Fr\"ohlich's Theorem on unbounded symmetric semigroups
as it is stated in \cite[Cor.~1.2]{Fro80} (see also \cite{MN12}).
Actually Fr\"ohlich assumes that the Hilbert space $\cH$ is separable, 
but this is not necessary for the conclusion. Replacing the assumption 
of weak measurability by weak continuity,  
all arguments in \cite{Fro80} work for non-separable spaces as well. 

\begin{thm}{\rm(Fr\"ohlich)}  \mlabel{thm:2.4} 
Let $H$ be a symmetric operator defined on the dense subspace $\cD$ of 
the Hilbert space $\cH$. Suppose that, for every $v\in\cD$, there
exists an $\eps_v>0$ and a curve 
$\phi \: ]0,\eps_v[ \to \cD$ satisfying 
\[ \dot\phi(t)=H\phi(t) \quad \mbox{ and }\quad 
\lim_{t\to 0}\phi(t)=v.\] 
Then the operator $H$ is essentially 
self-adjoint and $\phi(t) = e^{t\oline H}v$ in the sense of 
spectral calculus.
\end{thm}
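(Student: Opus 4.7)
The plan is to deduce essential self-adjointness of $H$ from the existence of ``forward-time'' solutions of $\dot\phi = H\phi$, and then identify them with the spectral semigroup $e^{t\oline H}v$. I would proceed in three movements.

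First, I would establish uniqueness of the solution curves. For any solution $\phi \: ]0,\eps[ \to \cD$ with $\dot\phi = H\phi$, the symmetry of $H$ on $\cD$ yields
\[
\tfrac{d}{dt}\|\phi(t)\|^2 = 2\la H\phi(t),\phi(t)\ra, \qquad
\tfrac{d^2}{dt^2}\|\phi(t)\|^2 = 4\|H\phi(t)\|^2,
\]
and Cauchy--Schwarz $|\la H\phi,\phi\ra|^2 \le \|H\phi\|^2\|\phi\|^2$ promotes convexity to log-convexity of $t \mapsto \|\phi(t)\|^2$. If $\phi_1,\phi_2$ are two solutions with the same initial value $v$, then $\psi := \phi_1 - \phi_2$ is again a solution with $\|\psi(t)\|\to 0$ as $t\to 0^+$; log-convexity, applied via chord estimates to $\log\|\psi(t)\|^2$, then forces $\psi\equiv 0$.

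Second, I would assemble the solutions into a symmetric local semigroup. Uniqueness and the ``restart'' principle $\phi_{\phi_v(s)}(t) = \phi_v(s+t)$ let one define $T_t v := \phi_v(t)$ on a dense domain (which may shrink as $t$ grows), with $T_{s+t} = T_s T_t$ whenever both sides make sense. Symmetry is immediate from
\[
\tfrac{d}{dt}\la\phi_v(t),\phi_w(s-t)\ra
= \la H\phi_v(t),\phi_w(s-t)\ra - \la\phi_v(t),H\phi_w(s-t)\ra = 0,
\]
which yields $\la T_s v, w\ra = \la v, T_s w\ra$ for $v,w$ in the appropriate domain.

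Third, I would prove essential self-adjointness and identify $\phi_v(t)$ with $e^{t\oline H}v$. The deficiency-vector route: suppose $w\in\cD(H^*)$ with $H^* w = \lambda w$ for some $\lambda\in\C$. Then $f(t) := \la\phi_v(t),w\ra$ satisfies $f'(t) = \la H\phi_v,w\ra = \la\phi_v,H^*w\ra = \bar\lambda f(t)$, whence $f(t) = \la v,w\ra\, e^{\bar\lambda t}$. For $\Re\lambda$ large and positive the right-hand side grows faster than $\|\phi_v(t)\|\,\|w\|$, which---after globalizing $\phi_v$ in time via the semigroup and controlling $\|\phi_v(t)\|$ by log-convexity---forces $\la v,w\ra = 0$ on a dense set, hence $w=0$ and $H$ is essentially self-adjoint. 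Once $\oline H$ is self-adjoint, the curve $t \mapsto e^{t\oline H}v$ is another solution of the ODE (in the spectral-calculus sense) with initial value $v$, and the uniqueness from Step 1 gives the identification.

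The main obstacle is the last step: no uniform positive lower bound on $\eps_v$ is assumed, so globalizing $\phi_v$ in time and then extracting enough growth control from log-convexity alone to beat the factor $e^{\bar\lambda t}$ is delicate. A cleaner alternative, which is what I would try first in practice, is Nussbaum's analytic-vector criterion: differentiating the ODE gives $\phi_v^{(n)}(t) = H^n\phi_v(t)$, so each $\phi_v(t)$ lies in $\bigcap_n \cD(H^n)$, and one then tries to upgrade log-convexity of $\|\phi_v(t)\|^2$ to real analyticity of $\phi_v$ on $]0,\eps_v[$, exhibiting a dense set of analytic vectors for $H$ and invoking Nelson--Nussbaum to conclude.
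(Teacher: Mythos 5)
The paper does not actually prove this statement: it is quoted from \cite[Cor.~1.2]{Fro80}, with only the remark that separability of $\cH$ is inessential. So your proposal must stand on its own. Your first two movements are correct and reproduce the elementary half of Fr\"ohlich's argument: from $\partial_t\la\phi(t),\phi(s)\ra=\la H\phi(t),\phi(s)\ra=\la\phi(t),H\phi(s)\ra=\partial_s\la\phi(t),\phi(s)\ra$ one gets $\la\phi(t),\phi(s)\ra=f(t+s)$, which justifies your formulas $\tfrac{d}{dt}\|\phi\|^2=2\la H\phi,\phi\ra$ and $\tfrac{d^2}{dt^2}\|\phi\|^2=4\|H\phi\|^2$, hence log-convexity; the chord argument then gives uniqueness, and the restart principle gives the symmetric local semigroup.

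The genuine gap is the third movement, which is where the entire content of the theorem sits, and neither of your routes closes it. The deficiency-vector route fails as stated: essential self-adjointness requires $\ker(H^*\mp i)=\{0\}$, and for $H^*w=\pm iw$ your computation gives $\la\phi_v(t),w\ra=\la v,w\ra e^{\mp it}$, whose modulus is constant, so no growth contradiction arises. Moving to $\lambda=R+i$ with $R$ large does not repair this, because the eigenvector $w$ then lies in $\ker\bigl(H^*-(R+i)\bigr)$, a space that changes with $R$; the resulting estimate $|\la v,w\ra|\le e^{-Rt}\|\phi_v(t)\|\,\|w\|$ only says $v$ is asymptotically orthogonal to a moving family of spaces, not that any fixed deficiency space is trivial. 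The analytic-vector route is the right idea (it is exactly the bridge the paper alludes to in its introduction, citing \cite{Sh84}), but ``upgrading log-convexity to real analyticity'' is precisely the hard theorem you would have to prove: the standard argument extracts from $f^{(2n)}(2t)=\|H^n\phi_v(t)\|^2\ge 0$ an integral representation $f(u)=\int_\R e^{\lambda u}\,d\mu(\lambda)$ (a Widder/Hamburger-moment type theorem, or Shucker's result, or the Klein--Landau local-semigroup machinery of \cite{KL81}), from which analyticity of $\phi_v$ and the growth bounds needed for Nelson's criterion follow; log-convexity alone does not deliver this. Finally, even granting self-adjointness of $\oline H$, the identification $\phi(t)=e^{t\oline H}v$ first requires showing $v\in\cD(e^{t\oline H})$ --- for instance by proving $P([-N,N])\phi_v(t)=e^{t\oline H}P([-N,N])v$ for spectral projections $P([-N,N])$ and letting $N\to\infty$ --- before your uniqueness argument can be invoked.
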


In \cite[Rem.~2.6]{MN12} we applied Fr\"ohlich's Theorem 
to the action of the Lie derivative on a Hilbert space with a smooth 
reproducing kernel. This leads to the following theorem. 

\begin{thm} {\rm(Geometric Fr\"ohlich Theorem)} 
\mlabel{T:Froelichgeo}
Let $M$ be a locally convex manifold and $K$ be a smooth positive definite 
kernel. If $X$ is a $K$-symmetric vector 
field on $M$ with the property that, for every $m\in M$, there exists an 
integral curve $\gamma_m \: [0,\eps_m] \to M$ starting in $m$, 
then $\cL_X\res_{\cH_K^0}$ is an essentially selfadjoint 
operator $\cH_K^0 \to \cH_K$ whose closure coincides with 
$\cL_X^K$. For $0 \leq t \leq \eps_m$, we have 
\[ e^{t\cL_X^K} K_m = K_{\gamma_m(t)}.\] 
\end{thm}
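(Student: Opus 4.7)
The plan is to reduce the claim to Fr\"ohlich's Theorem (Theorem~\ref{thm:2.4}) applied to the symmetric operator $H := \cL_X|_{\cH_K^0} \colon \cH_K^0 \to \cH_K$. By Proposition~\ref{prop:graphclosed}, applied with $\eps = +1$ since $X$ is $K$-symmetric, we already know that $\cH_K^0 \subseteq \cD_X$ (so $H$ is well-defined), that $H$ is symmetric, and that $\cL_X^K$ is closed. To invoke Fr\"ohlich, I need to exhibit, for every $v \in \cH_K^0$, some $\eps_v > 0$ together with a curve $\phi \colon \,]0,\eps_v[\, \to \cH_K^0$ satisfying $\dot\phi(t) = H\phi(t)$ and $\phi(t) \to v$ as $t \to 0^+$.

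Such a curve is built by ``propagating the base points along integral curves''. Writing $v = \sum_{i=1}^N c_i K_{m_i}$, pick integral curves $\gamma_{m_i}\colon [0,\eps_{m_i}] \to M$ supplied by the hypothesis, set $\eps_v := \min_i \eps_{m_i}$, and define
\[
\phi(t) := \sum_{i=1}^N c_i K_{\gamma_{m_i}(t)}, \qquad t \in [0,\eps_v[.
\]
Lemma~\ref{lem:ode} (with $\eps = +1$) tells us that each summand is a smooth $\cH_K$-valued curve lying in $\cD_X$ with $\frac{d}{dt} K_{\gamma_{m_i}(t)} = \cL_X^K K_{\gamma_{m_i}(t)}$; taking the linear combination yields $\phi(t) \in \cH_K^0$ with $\dot\phi(t) = H\phi(t)$, and $\phi(t) \to v$ as $t \to 0^+$ follows from the continuity of $m \mapsto K_m$.

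Fr\"ohlich's Theorem then gives essential selfadjointness of $H$ on $\cH_K^0$ together with the spectral identity $\phi(t) = e^{t\oline H} v$ for $t \in\,]0,\eps_v[$. Specializing to $v = K_m$ yields $e^{t\oline H} K_m = K_{\gamma_m(t)}$ on $]0,\eps_m[$, and continuity of both sides extends the identity to $[0,\eps_m]$. Finally, Proposition~\ref{prop:graphclosed}(iv) identifies $\oline H$ with $\cL_X^K$, completing the proof. The only delicate point in the whole argument is the verification that $\phi$ lands in the domain $\cH_K^0$ (not merely in $\cD_X$), which is actually automatic from the construction, since $\phi(t)$ is by definition a finite $\C$-linear combination of kernel functions $K_n$ with $n \in M$; hence no genuine obstacle arises, and the real content is packaged inside Lemma~\ref{lem:ode} and Fr\"ohlich's Theorem.
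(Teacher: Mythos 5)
Your proof is correct and follows essentially the same route as the paper: symmetry and closedness from Proposition~\ref{prop:graphclosed}, the ODE from Lemma~\ref{lem:ode}, Fr\"ohlich's Theorem~\ref{thm:2.4}, and Proposition~\ref{prop:graphclosed}(iv) to identify the closure with $\cL_X^K$. The only difference is that you make explicit the step of propagating a general element $v=\sum_i c_i K_{m_i}$ of $\cH_K^0$ along the integral curves so that the Fr\"ohlich curve stays in the domain $\cH_K^0$, a point the paper leaves implicit.
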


\begin{prf} First we recall that the $K$-symmetry of $X$ implies that 
$H := \cL_X^K\res_{\cH_K^0}$ is a symmetric operator (Proposition~\ref{prop:graphclosed}(ii)). 
Lemma~\ref{lem:ode} further implies that, for every integral curve $\gamma$ of $X$, the 
curve $\eta(t) :=K_{\gamma(t)}$ in $\cH_K$ satisfies the ODE 
$\eta'(t) = H \eta(t)$. Therefore Theorem~\ref{thm:2.4} applies and $\cL_X^K=\oline{H}$ follows
from Proposition~\ref{prop:graphclosed}(iv).
\end{prf}

\begin{ex} The simplest but typical class 
of examples where Fr\"ohlich's Theorem applies 
arise from smooth functions $\phi \: ]a,b[ \to \C$ on an open interval in 
$\R$ for which the kernel $K(x,y) := \phi\big(\frac{x+y}{2}\big)$ 
is positive definite. 
Then the vector field $X = \frac{\partial}{\partial t}$ 
satisfies $\cL_X f = f'$ on $\cH_K^0$, so that the operator 
$Df := f'$ on its maximal domain in $\cH_K$ is selfadjoint 
by the preceding theorem. 

To see the corresponding unitary one-parameter group 
$U_t := e^{itD}$, one can use \cite{Sh84} to see that 
the function $\phi$ extends analytically to the strip 
\[ S := \{ z \in \C \: a < \Re z < b\},\] so that 
$K^\C(z,w) := \phi\big(\frac{z+\oline w}{2}\big)$ provides 
an extension of the kernel $K$ to $S \times S$. On the corresponding 
Hilbert space $\cH_{K^\C} \subeq \cO(S)$, we then have 
$(U_t f)(z) = f(z+ it)$. 
\end{ex}

\begin{ex} For later use we record here how Fr\"ohlich's Theorem applies in the linear case. 
Let $V$ be a locally convex space
and $\la\cdot,\cdot\ra \: V \times V \to \C$ be 
a continuous positive semidefinite hermitian 
form. Then $K(v,w)=\la w,v\ra$ is a positive definite kernel and $\cH_K$ 
is identified with a subspace of the $V^\sharp$ of 
antilinear continuous functionals. 
The continuity of the kernel $K$ implies that the antilinear map 
$V\rightarrow \cH_K,\ v\mapsto K_v$ is continuous. 
For any continuous operator $L \: V \to V$, the formula
$$L\lambda:=-\lambda\circ L$$
defines by restriction to $\cD_L:=\{\lambda\in V^\sharp\mid L\lambda\in\cH_K\}$
an unbounded closed operator $L^K:\cD_L\rightarrow\cH_K$. 
If there exists $L^*:V\rightarrow V$ with
$$\la v,Lw\ra=\la L^*v,w\ra\quad\text{for}\quad v,w\in V$$
then
\begin{equation}\label{E:8}
LK_v=K_{-L^*v}\quad\text{for}\quad v\in V.
\end{equation}
\end{ex}

We can now formulate the following corollary to Theorem~\ref{thm:2.4}: 

\begin{cor} \label{T:3.7}
Let $L \: V \to V$ be a symmetric linear operator 
with the property that, for every $v\in V$, there exists a  
curve $\gamma_v \: [0,\eps_v] \to V$ starting in $v$ and satisfying the differential 
equation 
\[ \gamma_v'(t) = L \gamma_v(t).\] 
Then $L^K|_{\cH_K^0}$ 
is an essentially selfadjoint operator whose closure 
coincides with~$L^K$. For $0 \leq t \leq \eps_v$, we have 
\[ e^{-t L^K} K_v = K_{\gamma_v(t)}.\] 
\end{cor}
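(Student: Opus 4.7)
The plan is to deduce Corollary~\ref{T:3.7} as a direct specialization of the Geometric Fr\"ohlich Theorem (Theorem~\ref{T:Froelichgeo}), applied with $M := V$ viewed as a locally convex manifold, with smooth positive definite kernel $K(v,w) = \la w,v\ra$ and smooth linear vector field $X(v) := Lv$.

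First I would check that $X$ is $K$-symmetric. Since $K$ is linear in the first slot and antilinear in the second, straightforward differentiation gives
\[ \cL_X^1 K(v,w) = \la w, Lv\ra \quad\text{and}\quad \cL_X^2 K(v,w) = \la Lw, v\ra,\]
which agree precisely because $L = L^*$. The hypothesis $\gamma_v'(t) = L\gamma_v(t)$ says exactly that $\gamma_v$ is an integral curve of $X$ starting at $v$, so all hypotheses of Theorem~\ref{T:Froelichgeo} are in place.

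Applying the Geometric Fr\"ohlich Theorem then yields that $\cL_X\res_{\cH_K^0}$ is essentially selfadjoint with closure $\cL_X^K$, and that $e^{t\cL_X^K} K_v = K_{\gamma_v(t)}$ for $0 \le t \le \eps_v$. To translate this into the stated conclusion about $L^K$, the remaining step is to identify the geometric operator $\cL_X$ with $-L^K$ on $\cH_K^0$. Using that the map $v \mapsto K_v$ is linear, one computes $\cL_X K_v = K_{Lv}$, whereas from the definition of $L^K$ and formula \eqref{E:8} one has $L^K K_v = -K_v \circ L = K_{-L^*v} = -K_{Lv}$. Hence $L^K\res_{\cH_K^0} = -\cL_X\res_{\cH_K^0}$, and the essential selfadjointness of $L^K\res_{\cH_K^0}$ together with the formula $e^{-tL^K}K_v = K_{\gamma_v(t)}$ follow immediately.

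There is no substantive obstacle here: the work reduces to careful sign bookkeeping, namely matching the geometric Lie derivative $\cL_X$ with the algebraic operator $\lambda \mapsto -\lambda\circ L$, and verifying the $K$-symmetry of $X$ in the chosen sesquilinear convention. Once these bookkeeping items are in place, Corollary~\ref{T:3.7} is a clean special case of Theorem~\ref{T:Froelichgeo}.
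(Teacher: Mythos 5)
Your argument is correct, but it is routed differently from the paper's. The paper states Corollary~\ref{T:3.7} as a direct corollary of Fr\"ohlich's Theorem~\ref{thm:2.4}: in the setting of the preceding linear example one takes the symmetric operator $-L^K\res_{\cH_K^0}$ and the curves $\eta(t)=K_{\gamma_v(t)}$, which by \eqref{E:8} and the continuity and (anti)linearity of $v\mapsto K_v$ satisfy $\eta'(t)=-L^K\eta(t)$, so Theorem~\ref{thm:2.4} yields essential selfadjointness and the semigroup formula at once. You instead specialize the Geometric Fr\"ohlich Theorem~\ref{T:Froelichgeo} to the linear manifold $M=V$, the kernel $K(v,w)=\la w,v\ra$ and the vector field $X(v)=Lv$; this is legitimate: a continuous sesquilinear form is a smooth kernel, the continuity of $L$ (implicit in the corollary, which lives in the preceding example) makes $X$ smooth, and Theorem~\ref{T:Froelichgeo} only requires integral curves through every point, not local integrability, so the possibly non-global character of the linear flow is harmless. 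Since Theorem~\ref{T:Froelichgeo} is itself deduced from Theorem~\ref{thm:2.4}, the two routes use the same machinery; yours buys the conclusion from an already-proved packaged statement at the price of a translation dictionary, while the paper's direct application of Theorem~\ref{thm:2.4} avoids the manifold setup altogether. Two bookkeeping points you should make precise: first, $\cL_X K_v = K_{Lv}$ does not follow from linearity of $v\mapsto K_v$ alone (the paper's map $v\mapsto K_v$ is antilinear, and in any case the identity needs $L=L^*$, e.g.\ via the $K$-symmetry you verified or Lemma~\ref{lem:ode}); second, to conclude that the closure of $L^K\res_{\cH_K^0}$ is the maximal operator $L^K$, you must identify $\cL_X$ with $\lambda\mapsto \lambda\circ L=-L\lambda$ on all of $\cH_K\subeq V^\sharp$, not only on $\cH_K^0$, so that $\cD_X=\cD_L$ and $\cL_X^K=-L^K$; with that, the statement follows from Theorem~\ref{T:Froelichgeo} exactly as you say.
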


%\begin{prob}
%  Generalize all that to a  suitable class of operator-valued kernels.
%\end{prob}

\section{Integrability for reproducing kernel spaces} 
\mlabel{sec:3}

In this section we study Lie algebra actions on the locally convex manifold $M$
that are compatible with the kernel $K$. Our first main result is Theorem~\ref{thm:4.8} 
which provides a sufficient condition for the Lie algebra 
representation of $\g^c$ coming from an action of $\g$ on $\cH_K$ by Lie derivatives 
to integrate to a unitary representation of the corresponding simply connected Lie group~$G^c$.
Applying this result to open subsemigroups of Lie groups, we further obtain 
an interesting generalization of the L\"uscher--Mack Theorem for semigroups that need not 
possess a polar decomposition.

\subsection{Smooth right actions and compatible kernels}

\begin{defn} \mlabel{def:3.9}
Let $\fg$ be a Lie algebra and $\tau$ be an involutive automorphism of $\fg$. The
pair $(\fg,\tau)$ is called a \emph{symmetric Lie algebra}. If $\fh:=\ker(\tau-\1)$ and  
$\fq=\ker(\tau + \1)$, then $[\h,\h]\subseteq \h$, $[\fq,\fq]\subseteq \h$
and $[\fh,\fq]\subseteq \fq$. 
It follows that $\g^c:=\h+i\fq$ is a Lie algebra.
It is called the \textit{Cartan dual} of $\g=\h+\fq$. Extending $\tau$ to
a complex linear automorphism of $\g_\C$ and then restricting to
$\g^c$ shows that $(\g^c,\tau)$ is also a symmetric Lie algebra.
\end{defn}

\begin{ex} Let $d\in \N$ and $p,q\in \N_0$ such that $p+q=d$. 
Let 
\[I_{p,q}=\begin{pmatrix} -I_p & 0 \\ 0 & I_q\end{pmatrix}\, .\]
On $G=\R^d\rtimes \OO_d(\R)$ we consider the involution
\[\tau (x,a)=(I_{p,q}x,I_{p,q}aI_{p,q})\, .\]
Then 
\[\fq=(\R^p\oplus 0_q)\oplus \left\{\left. \begin{pmatrix} 0 & X\\ -X^\top & 0\end{pmatrix}
\, \right|\, X\in M_{p,q}\right\}\, .\]
It is then easy to see that
\[\fg^c\simeq \R^{p,q}\rtimes \fo_{p,q}(\R)\, .\]
Hence $G^c$ is locally isomorphic to $\R^{p,q}\rtimes \OO_{p,q}(\R)$. 
Here $\R^{p,q} \cong  i \R^p \oplus \R^q$ stands for
the vector space $\R^d$ with the bilinear form 
\[ \beta_{p,q}(x,y)=x_1y_1+\ldots + x_py_p - x_{p+1}y_{p+1}-\ldots - x_ny_n.\] 
\end{ex}

\begin{defn} Let $(\g,\tau)$ be a symmetric Lie algebra, and let
$\beta \: \g \to \cV(M)$ be a homomorphism. 
A smooth positive definite kernel  
$K \in C^\infty(M \times M,\C)$ is said to be 
{\it $\beta$-compatible} if the vector fields 
in $\beta(\fh)$ are $K$-skew-symmetric and the vector fields 
in $\beta(\fq)$ are $K$-symmetric. 
\end{defn}

Let $H$ be a connected Lie group with Lie algebra $\fh$.

\begin{defn}
  \mlabel{def:3.1} 
A {\it smooth right action} of the pair $(\fg,H)$ on a 
(locally convex) manifold $M$ is a pair $(\beta,\sigma)$, 
where 
\begin{itemize}
\item[(a)] $\sigma \: M \times H \to M$ is a 
smooth right action, 
\item[(b)] and $\beta \: \g \to \cV(M)$ is a homomorphism 
of Lie algebras for which
\[ \hat\beta \: \g \times M \to TM,\quad 
(x,m) \mapsto \beta(x)(m) \] 
is smooth,  
\item[(c)] $\dot\sigma(x) = \beta(x)$ for $x \in \fh$, 
\item[(d)] each vector field $\beta(x)$, $x \in \fq$, is locally integrable. 
\end{itemize}
\end{defn}

\begin{rem} \mlabel{rem:3.2} 
In view of Lemma~\ref{lem:vectrafo}, conditions 
(a)-(d) above imply that 
\[ (\sigma_{\exp x})_* \beta(y) = \beta(e^{-\ad x}y) 
\quad \mbox{ for } \quad x \in \fh, y \in \g.\] 
\end{rem}

In the following $K$ is a smooth $\beta(\g)$-compatible positive definite 
kernel on $M \times M$. 
For $x \in \g$, we abbreviate $\cL_x := \cL_{\beta(x)}^K$ for the maximal restriction 
of the Lie derivatives to $\cD_x:=\cD_{\beta(x)}$ and we extend this definition to 
$\fg_\C$ in an obvious fashion. We moreover consider the subspaces 
\[ \cD^1 := \bigcap_{x \in \g} \cD_{x} 
= \{ \phi \in \cH_K \: (\forall x \in \fg)\, \cL_{\beta(x)}\phi \in \cH_K\}\] 
and 
\[ \cD := \{ \phi \in \cH_K \: (\forall n \in \N)
(\forall x_1,\ldots, x_n \in \fg)\, \cL_{\beta(x_1)}\cdots 
\cL_{\beta(x_n)}\phi \in \cH_K\}\, . \]

Then
$$\alpha:\fg_\C \rightarrow \End(\cD),\ x\mapsto \cL_x|_\cD$$ 
defines a Lie algebra representation such that  $\fg^c$ acts by skew-symmetric operators.
Our main goal in this section is to prove integrability of $\alpha|_{\g^c}$.

%Furthermore, $\cD$ is an invariant 
%subspace of the $\g_\C$-module $C^\infty(M,\C)$. 

First we show that $\cD$ is a dense subspace of $\cH_K$.

\begin{prop}
  \mlabel{lem:3.5} 
We have $\cH_K^0\subeq\cD$. In particular $\cD$ is dense in $\cH$.
\end{prop}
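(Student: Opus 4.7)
The plan is to prove that $K_m \in \cD$ for every $m \in M$; density of $\cD$ in $\cH_K$ will then follow from the standard fact that $\cH_K^0 = \mathrm{span}\{K_m \: m \in M\}$ is already dense. So the real content to establish is that every iterated Lie derivative $\cL_{\beta(x_1)} \cdots \cL_{\beta(x_n)} K_m$ lies in $\cH_K$, for all $n \geq 0$ and $x_1, \ldots, x_n \in \fg$.

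The key idea is to use the $\beta$-compatibility to transfer all first-argument Lie derivatives into second-argument ones. Writing $\eps_x = -1$ for $x \in \fh$ and $\eps_x = +1$ for $x \in \fq$, compatibility reads $\cL^1_{\beta(x)} K = \eps_x \cL^2_{\beta(x)} K$. Since first- and second-argument Lie derivatives on $C^\infty(M \times M)$ commute (they act on different factors), one can peel the $\cL^1$'s off the outside of $(\cL^1_{\beta(x_1)} \cdots \cL^1_{\beta(x_n)} K)(n', m)$ one at a time, each swap producing a factor $\eps_{x_i}$, yielding by a straightforward induction
\[ (\cL_{\beta(x_1)} \cdots \cL_{\beta(x_n)} K_m)(n') = \eps_{x_1}\cdots\eps_{x_n}\,(\cL^2_{\beta(x_n)} \cdots \cL^2_{\beta(x_1)} K)(n', m). \]
To see that the right hand side, as a function of $n'$, belongs to $\cH_K$, I would use that the map $F \: M \to \cH_K,\ m \mapsto K_m$, is smooth by \cite[Thm.~7.1]{Ne10}, together with the smoothness hypothesis on $\hat\beta$. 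Define inductively smooth maps $F_k \: M \to \cH_K$ by $F_0 := F$ and $F_k(m) := dF_{k-1}|_m(\beta(x_k)(m))$. Each point evaluation $\ev_{n'} \: \cH_K \to \C$ is continuous linear and therefore commutes with derivatives, so a routine induction identifies $F_n(m)(n') = (\cL^2_{\beta(x_n)} \cdots \cL^2_{\beta(x_1)} K)(n', m)$. Combined with the displayed identity this gives $\cL_{\beta(x_1)} \cdots \cL_{\beta(x_n)} K_m = \eps_{x_1}\cdots\eps_{x_n}\,F_n(m) \in \cH_K$, which is what was needed.

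The main obstacle is pure bookkeeping: keeping the order of the $\cL^2$-derivatives straight during the transfer and matching them precisely with the iterated derivatives of the $\cH_K$-valued map $F$. No Fr\"ohlich-type selfadjointness argument, essential skew-adjointness, or flow analysis is required here; this is a soft smooth-kernel/reproducing-property calculation, which is why it serves as a natural first step before the integrability theorem that follows.
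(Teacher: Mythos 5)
Your proof is correct, but it takes a genuinely different route from the paper's. The paper runs the argument through the dynamical machinery it has already set up: it composes the local flows $\Phi^{\beta(x_1)}_{t_1}\circ\cdots\circ\Phi^{\beta(x_n)}_{t_n}$ starting at $m$, observes via Lemma~\ref{lem:ode} that the $\cH_K$-valued map $\eta(t_1,\dots,t_n)=K_{\gamma(t_1,\dots,t_n)}$ satisfies a first-order equation in each variable, and then invokes the closedness of the operators $\cL^K_{\beta(x_j)}$ (Proposition~\ref{prop:graphclosed}) to conclude that the iterated partial derivative of $\eta$ at $0$ equals $(\eps_1\cdots\eps_n)\,\cL_{x_1}\cdots\cL_{x_n}K_m$, which in particular lies in $\cH_K$. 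Your argument stays entirely at the level of the kernel: you use compatibility to convert first-argument Lie derivatives into second-argument ones (the commutation $\cL^1_X\cL^2_Y=\cL^2_Y\cL^1_X$ is legitimate, since the vector fields $(X,0)$ and $(0,Y)$ on $M\times M$ commute and mixed partials are symmetric in this calculus, so the compatibility identity is only ever applied to $K$ itself), and then identify the resulting function of the first variable with an iterated directional derivative of the smooth map $m\mapsto K_m$, which lands in $\cH_K$ because point evaluations are norm-continuous and therefore commute with the limits defining those derivatives. This buys you something real: you never use the local integrability of the vector fields $\beta(x)$, $x\in\fq$, which both Lemma~\ref{lem:ode} and the closedness statement require, so your version of the proposition holds for any smooth $\beta$-compatible kernel irrespective of whether the $\fq$-vector fields generate local flows. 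What the paper's route buys is the explicit link between $\cL_{x_1}\cdots\cL_{x_n}K_m$ and derivatives of the orbit map $\eta$, which is the form in which the statement is exploited downstream (smooth vectors, the Fr\"ohlich-type arguments). One bookkeeping point you should make explicit: the definition of $\cD$ quantifies over all $x_i\in\fg$, while your signs $\eps_{x_i}$ are only defined for $x_i\in\fh\cup\fq$; one reduces to that case by linearity of $x\mapsto\cL_{\beta(x)}$, exactly as the paper tacitly does.
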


\begin{prf} Let $x_1, \ldots, x_n \in \fh \cup \fq$, $m \in M$ and choose $\eps > 0$ 
so small that we have a smooth map 
\[ \gamma \: [0,\eps]^n \to M,\ \gamma(t_1,\dots,t_n)
=\Phi_{t_1}^{\beta(x_1)}\circ\dots\circ \Phi_{t_n}^{\beta(x_n)}(m)\] 
Then the curves 
\[ t \mapsto \gamma(0,\ldots, 0, t, t_{k+1}, \ldots, t_n) \] 
are integral curves of $\beta(x_k)$. 
Let $\eta(t_1,\dots,t_n) := K_{\gamma(t_1,\dots,t_n)}$. Then $\eta \: [0,\eps]^n\to \cH_K$ is smooth and 
Lemma~\ref{lem:ode} implies that 
\[ \frac{\partial}{\partial t_k} \eta(0,\ldots, 0, t_k, \ldots, t_n) 
= \eps_k \cL_{x_k} \eta(0,\ldots, 0, t_k, \ldots, t_n),\] 
where $\cL_{\beta(x_k)}^1 K = \eps_k \cL_{\beta(x_k)}^2 K$. We conclude with the closedness of the 
operators $\cL^K_{\beta(x_j)}$ (Proposition~\ref{prop:graphclosed}) 
that $K_{\gamma(0)} \in \cD$ with 
\[ \frac{\partial^n \eta}{\partial t_1\cdots \partial t_n}(0)
=  (\eps_1 \cdots \eps_n) \cL_{x_1} \cdots \cL_{x_n} K_{\gamma(0)}.\] 
This shows that $\cH_K^0 \subeq \cD$. 
\end{prf}

From Theorem~\ref{T:Froelichgeo} and Proposition~\ref{lem:3.5}, we obtain:

\begin{cor}\label{C:esssa}
For $x\in\fq$, the operator
$\cL_x$ is selfadjoint and $\cH_K^0$, and hence also $\cD$, is a core for $\cL_x$.
\end{cor}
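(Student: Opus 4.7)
The plan is to apply the Geometric Fröhlich Theorem (Theorem~\ref{T:Froelichgeo}) directly. Fix $x \in \fq$. By the $\beta$-compatibility hypothesis on $K$, the vector field $\beta(x)$ is $K$-symmetric. By condition (d) of Definition~\ref{def:3.1}, $\beta(x)$ is locally integrable, so by Theorem~\ref{thm:8.6.2.4} it admits a maximal local flow, which in particular provides an integral curve $\gamma_m \: [0,\eps_m] \to M$ through every point $m \in M$. The hypotheses of Theorem~\ref{T:Froelichgeo} are therefore satisfied.

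Applying that theorem yields at once that $\cL_{\beta(x)}\res_{\cH_K^0}$ is essentially selfadjoint and its closure coincides with $\cL_x = \cL_{\beta(x)}^K$. This gives both that $\cL_x$ is selfadjoint and that $\cH_K^0$ is a core for $\cL_x$.

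It remains to deduce that $\cD$ is also a core. By Proposition~\ref{lem:3.5}, we have the chain of inclusions $\cH_K^0 \subeq \cD \subeq \cD(\cL_x)$, and $\cL_x$ restricted to $\cD$ agrees with $\cL_{\beta(x)}$. Thus $\cL_x\res_\cD$ is a symmetric extension of $\cL_x\res_{\cH_K^0}$ and a restriction of the selfadjoint operator $\cL_x$. Since the closure of $\cL_x\res_{\cH_K^0}$ already equals $\cL_x$, the closure of the intermediate operator $\cL_x\res_\cD$ must equal $\cL_x$ as well, i.e., $\cD$ is a core.

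There is essentially no obstacle here: the entire content of the corollary has been prepared in Theorem~\ref{T:Froelichgeo} and Proposition~\ref{lem:3.5}, and the only verification needed is that the hypotheses of those results apply under the present assumptions ($K$-symmetry from $\beta$-compatibility and local integrability from Definition~\ref{def:3.1}(d)).
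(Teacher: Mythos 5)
Your proposal is correct and follows exactly the paper's route: the corollary is obtained by applying the Geometric Fr\"ohlich Theorem (Theorem~\ref{T:Froelichgeo}) to the $K$-symmetric, locally integrable vector field $\beta(x)$, $x \in \fq$, together with the inclusion $\cH_K^0 \subeq \cD$ from Proposition~\ref{lem:3.5}. Your explicit sandwich argument showing that $\cD$ is also a core is the (trivial) step the paper leaves implicit, and it is carried out correctly.
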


\noindent The following lemma follows from Lemma~\ref{lem:3.2}. 

\begin{lem} \mlabel{lem:3.1} If $K$ is a smooth $\beta$-compatible positive definite kernel, 
then $K$ is $H$-invariant and 
\[ \pi^H(h)\phi := \phi \circ \sigma_h \] 
defines a continuous unitary representation $(\pi^H, \cH_K)$ of $H$, for which 
\begin{equation}
  \label{eq:kerinv2}
  \pi^H(h)K_m = K_{m.h^{-1}} \quad \mbox{ for } \quad m \in M, h \in H.  
\end{equation}
In particular, $\cH_K^0 \subeq \cH^\infty(\pi^H)$. 
For $x \in \fh$, the infinitesimal generator of the unitary 
one-parameter group $\pi^H_x(t) := \pi^H(\exp tx)$ is given by 
$\cL_x= \oline{\dd\pi^H}(x)$. Moreover, $\cH_K^0$, and hence $\cD$,
is a core for $\cL_x$.
\end{lem}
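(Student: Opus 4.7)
The plan is to bootstrap from Lemma~\ref{lem:3.2} applied to one-parameter subgroups, and then to use the fact that $H$ is connected (so generated by $\exp(\fh)$) to extend everything to a representation of $H$.

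First, fix $x \in \fh$. By condition (c) of Definition~\ref{def:3.1}, the orbit map $t \mapsto \sigma(m,\exp tx)$ is an integral curve of $\beta(x)$, so the one-parameter subgroup $\R \to \Diff(M), t \mapsto \sigma_{\exp tx}$ is a smooth $\R$-action whose infinitesimal generator is $\beta(x)$. Since $K$ is $\beta$-compatible and $x \in \fh$, the vector field $\beta(x)$ is $K$-skew-symmetric, so Lemma~\ref{lem:3.2} applies: the assignment $U^x_t\phi := \phi\circ\sigma_{\exp tx}$ is a continuous unitary one-parameter group on $\cH_K$ satisfying $U^x_t K_m = K_{\sigma_{\exp(-tx)}(m)}$, and its infinitesimal generator is $\cL_x$, for which $\cH_K^0$ is a core. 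In particular $K(\sigma_{\exp tx}(m),\sigma_{\exp tx}(n)) = K(m,n)$ for all $t\in\R$ and $m,n\in M$, so $K$ is invariant under every one-parameter subgroup of $H$ lying in $\exp(\fh)$. Since $H$ is connected, it is generated by $\exp(\fh)$, and hence $K$ is $H$-invariant.

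Next, $H$-invariance of $K$ together with \cite[Prop.~II.4.9]{Ne00} (already invoked in the proof of Lemma~\ref{lem:3.2}) shows that $\pi^H(h)\phi := \phi\circ\sigma_h$ defines a unitary operator on $\cH_K$, and formula \eqref{eq:kerinv2} is immediate from $K_m(\sigma_h(n)) = K(\sigma_h(n),m) = K(n,\sigma_{h^{-1}}(m)) = K_{m.h^{-1}}(n)$. The map $h\mapsto \pi^H(h)$ is a homomorphism because $\sigma$ is a right action. For strong continuity it suffices, by unitarity, to check it on the dense subspace $\cH_K^0$; but $\pi^H(h)K_m = K_{m.h^{-1}}$ depends continuously on $h$ because $m \mapsto K_m$ is smooth (in fact, by \cite[Thm.~7.1]{Ne10}) and $h \mapsto m.h^{-1}$ is smooth. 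The same observation, applied in all orders of differentiation via the chain rule, shows that $h\mapsto\pi^H(h)K_m$ is smooth, so $K_m \in \cH^\infty(\pi^H)$, and thus $\cH_K^0 \subeq \cH^\infty(\pi^H)$.

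For $x\in\fh$, the unitary one-parameter group $\pi^H_x(t) = \pi^H(\exp tx)$ coincides by construction with the group $U^x_t$ from the first paragraph, so its infinitesimal generator equals the closure of $\cL_x|_{\cH_K^0}$, which by Proposition~\ref{prop:graphclosed}(iv) equals $\cL_x$; in the notation of the lemma this is $\oline{\dd\pi^H}(x)$. The fact that $\cH_K^0$ is a core for $\cL_x$ has already been recorded; since $\cH_K^0\subeq\cD\subeq\cD(\cL_x)$ by Proposition~\ref{lem:3.5}, $\cD$ is also a core. No step is particularly hard here, the only mild subtlety being the passage from invariance under each one-parameter subgroup to invariance under $H$, which is where connectedness of $H$ is used in an essential way.
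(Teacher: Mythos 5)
Your proof is correct and takes essentially the same route as the paper, whose entire argument for this lemma is the single sentence that it ``follows from Lemma~\ref{lem:3.2}''. You have merely supplied the details the paper leaves implicit: applying Lemma~\ref{lem:3.2} to the one-parameter groups $\sigma_{\exp tx}$, using connectedness of $H$ to pass from invariance under $\exp(\fh)$ to full $H$-invariance, and identifying the generator with $\cL_x$ via Proposition~\ref{prop:graphclosed}(iv).
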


\subsection{The Integrability Theorem} 

\noindent The proof of the integrability theorem will be based on the following result.

\begin{thm} {\rm(\cite{Mer11})}\mlabel{T:intcrit}
Let $G^c$ be a simply
connected Banach--Lie group with Lie algebra $\g^c$. Assume 
that $\g^c=\fa_1\oplus\fa_2$ where $\fa_1$ and $\fa_2$ are closed subspaces. 
Let $\alpha$ be a 
strongly continuous representation of $\g$ on a dense subspace  $\cD$ 
of a Hilbert space $\cH$ such that 
for every $x\in\fa_1\cup\fa_2$, $\alpha(x)$ is essentially skew-adjoint, 
$e^{\oline{\alpha(x)}}\cD\subseteq\cD$ and 
\[ e^{\oline{\alpha(x)}}\alpha(y)e^{-\oline{\alpha(x)}}=\alpha(e^{\ad x}y)
\quad \mbox{ for } \quad y\in\g^c.\] 
Then $\alpha$ integrates to a continuous unitary representation 
$(\pi, \cH)$ of $G^c$ with $\cD \subeq \cH^\infty$ and 
$\alpha(x) = \dd\pi(x)|_{\cD}$ for $x \in \g^c$. 
\end{thm}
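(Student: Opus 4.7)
The plan is to use the splitting $\g^c = \fa_1 \oplus \fa_2$ to construct a candidate for $\pi$ in a neighborhood of the identity, verify that it is a local homomorphism on a dense set of smooth vectors, and then extend globally using that $G^c$ is simply connected.

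First, for each $x \in \fa_1 \cup \fa_2$ I would invoke Stone's theorem on $i \overline{\alpha(x)}$ (since $\alpha(x)$ is essentially skew-adjoint) to obtain a strongly continuous one-parameter unitary group $U^x(t) := e^{t\overline{\alpha(x)}}$. The assumption $e^{\overline{\alpha(x)}}\cD \subseteq \cD$, applied to rescalings $tx$, gives $U^x(t)\cD \subseteq \cD$ for all $t \in \R$, and combined with the strong continuity of $\alpha$ this ensures $\cD$ is a common invariant core. The intertwining relation then promotes to $U^x(t)\,\overline{\alpha(y)}\,U^x(-t) = \overline{\alpha(e^{t\ad x}y)}$ on $\cD$ for all $y \in \g^c$.

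Next I would exploit the direct sum decomposition to get a local chart on $G^c$: the map $\Phi : \fa_1 \oplus \fa_2 \to G^c,\ (x_1,x_2) \mapsto \exp(x_1)\exp(x_2)$ is a local diffeomorphism at the origin by the inverse function theorem in Banach--Lie groups. On the image $V$ of a small neighborhood I set
\[ \pi(\exp(x_1)\exp(x_2)) := U^{x_1}(1)\,U^{x_2}(1). \]
For $\phi \in \cD$ and a smooth curve $c : [0,1] \to G^c$ starting at $e$ and staying in $V$, with logarithmic derivative $\delta(c)(t) = \dot c(t) c(t)^{-1} \in \g^c$, I would show that $v(t) := \pi(c(t))\phi$ is the unique solution in $\cH$ of the non-autonomous ODE $\dot v(t) = \overline{\alpha(\delta(c)(t))} v(t)$, $v(0) = \phi$. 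Existence for the two coordinate curves $t \mapsto \exp(t x_i)$ is immediate from Stone's theorem; existence and uniqueness along arbitrary smooth paths follow by breaking the path into small $\fa_1$- and $\fa_2$-legs and using the intertwining relation to commute the two families past each other, which is exactly what ensures consistency of the ODE formulation.

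The hard part is then the global extension. Once the ODE formulation is set up, I would define $\pi(g)\phi$ for arbitrary $g \in G^c$ and $\phi \in \cD$ by choosing a smooth path from $e$ to $g$ and solving the ODE along it, and then argue independence of the path via simple connectedness: the local homomorphism property already shows that for any nullhomotopic loop the terminal value coincides with $\phi$, because any such loop can be subdivided into small pieces contained in chart neighborhoods where $\pi$ is defined and where homomorphism is known. Unitarity of the resulting operators follows from unitarity of each $U^{x_i}(1)$, and the strong continuity of $g \mapsto \pi(g)\phi$ on the dense set $\cD$ extends to all of $\cH$ by a $3\eps$ argument. The main obstacle, and where the intertwining hypothesis really earns its keep, is precisely this verification that the path-ordered product of the one-parameter groups $U^{x_i}$ descends to a well-defined function on $G^c$ rather than depending on the chosen decomposition of the path.
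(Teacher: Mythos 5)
First, a point of reference: the paper does not prove this statement at all --- it is quoted verbatim from \cite{Mer11} and used as a black box. So the only meaningful comparison is with Merigon's proof in that reference, and your outline does follow its skeleton: Stone's theorem applied to $\overline{\alpha(x)}$ for $x\in\fa_1\cup\fa_2$ (correctly using that $\fa_i$ is a subspace to get $e^{t\overline{\alpha(x)}}\cD\subseteq\cD$ for all $t$), unitaries attached to concatenations of exponential legs with speeds in $\fa_1\cup\fa_2$, an ODE characterization $\dot v(t)=\overline{\alpha(\delta(c)(t))}\,v(t)$ of these ``path-ordered products'' (with uniqueness of $\cD$-valued solutions coming from skew-symmetry, $\frac{d}{dt}\|v(t)\|^2=0$), and a monodromy argument using simple connectedness of $G^c$.

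There is, however, a genuine gap exactly at the point you label as where ``the intertwining hypothesis really earns its keep.'' Your global extension argument invokes ``chart neighborhoods where $\pi$ is defined and where homomorphism is known,'' but the local homomorphism property of $\pi(\exp x_1\exp x_2):=U^{x_1}(1)U^{x_2}(1)$ is never established --- it is precisely equivalent to the path-independence you are trying to deduce from it, so as written the argument is circular. The mechanism you offer for it, ``commuting the two families past each other,'' does not go through directly: conjugation gives $U^x(t)\,\overline{\alpha(y)}\,U^x(-t)=\overline{\alpha(e^{t\ad x}y)}$, but $e^{t\ad x}y$ generally lies in neither $\fa_1$ nor $\fa_2$, so one cannot simply re-express a product $U^{x_2}(1)U^{y_1}(1)$ as a product of legs of the original two families. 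What is actually needed (and what occupies the bulk of \cite{Mer11}) is a homotopy-invariance lemma for the ODE along adapted paths, proved by a two-parameter (zero-curvature) argument that differentiates the transport in the homotopy parameter; this is where the strong continuity of $\alpha$ and the domain invariance are really consumed. A secondary inaccuracy: existence of solutions along \emph{arbitrary} smooth paths does not ``follow by breaking the path into small $\fa_1$- and $\fa_2$-legs'' --- a general path is not such a concatenation, and one would need a product-integral limit; Merigon sidesteps this by working only with piecewise-exponential paths, which suffice since $\exp\fa_1\cup\exp\fa_2$ generates the connected group $G^c$. Finally, the conclusions $\cD\subseteq\cH^\infty$ and $\dd\pi(x)|_\cD=\alpha(x)$ for \emph{all} $x\in\g^c$ (not just $x\in\fa_1\cup\fa_2$) require a separate argument that you do not address.
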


\begin{lem}
  \mlabel{lem:3.6}
For each $\phi \in \cD^1$, the complex linear map 
\[ \ev_\phi \: \g_\C \to \cH_K, \quad x \mapsto \cL_x \phi \] 
is continuous. In particular, the representation $\alpha:\g_\C\rightarrow\End(\cD)$ 
is strongly continuous.
\end{lem}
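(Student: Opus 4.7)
The plan is to apply the closed graph theorem. Since $\g$ is a Banach Lie algebra, $\g_\C$ is a Banach space, and $\cH_K$ is a Hilbert space, so both are Banach; linearity of $\ev_\phi$ is immediate from linearity of $\beta$ and of the Lie derivative in the vector field argument (the extension to $\g_\C$ is by complex linearity and lands in $\cH_K$ because $\phi \in \cD^1$). It therefore suffices to verify that the graph of $\ev_\phi$ is closed.

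To this end, suppose $x_n \to x$ in $\g_\C$ and $\ev_\phi(x_n) = \cL_{x_n}\phi \to \psi$ in $\cH_K$. The key observation is that the smoothness of $\hat\beta \: \g \times M \to TM$ (extended complex linearly to $\g_\C$) together with the fact that $\beta$ is linear in the first argument means that, for each fixed $m \in M$, the map $\g_\C \to T_m M_\C$, $y \mapsto \hat\beta(y,m)$, is a continuous linear map. Since $\phi \in \cH_K \subeq C^\infty(M,\C)$, the differential $\dd\phi(m) \: T_m M_\C \to \C$ is continuous, and therefore
\[
(\cL_{x_n}\phi)(m) = \dd\phi(m)\bigl(\hat\beta(x_n,m)\bigr) \longrightarrow \dd\phi(m)\bigl(\hat\beta(x,m)\bigr) = (\cL_x\phi)(m).
\]
On the other hand, convergence $\cL_{x_n}\phi \to \psi$ in $\cH_K$ yields pointwise convergence via $\psi(m) = \la \psi, K_m\ra$, so $\psi(m) = (\cL_x\phi)(m)$ for every $m \in M$. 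Hence $\psi = \cL_x\phi$, which lies in $\cH_K$ by the assumption $\phi \in \cD^1$, so $(x,\psi)$ belongs to the graph of $\ev_\phi$. Closedness of the graph follows, and the closed graph theorem gives continuity.

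For the final assertion, strong continuity of $\alpha \: \g_\C \to \End(\cD)$ means that for each $\phi \in \cD$, the orbit map $x \mapsto \alpha(x)\phi = \cL_x\phi$ is continuous from $\g_\C$ into $\cH_K$. Since the definition of $\cD$ (requiring iterated Lie derivatives to lie in $\cH_K$) contains the case $n=1$, we have $\cD \subeq \cD^1$, and the claim is just a specialization of the first part of the lemma. The only real subtlety is the interplay between the linearity and smoothness of $\hat\beta$; the rest is a clean closed-graph argument, and I do not anticipate any serious obstacle.
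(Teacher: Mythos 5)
Your proof is correct and follows essentially the same route as the paper: both invoke the closed graph theorem for the map between the Banach spaces $\g_\C$ and $\cH_K$, and verify closedness of the graph by combining the continuity of the linear functional $x \mapsto (\cL_x\phi)(m) = \dd\phi(m)\beta(x)(m)$ (from the smoothness of $\phi$ and of $\hat\beta$) with the fact that convergence in $\cH_K$ implies pointwise convergence via the reproducing kernel. The final reduction of strong continuity of $\alpha$ to the case $\phi \in \cD \subeq \cD^1$ is likewise the intended one.
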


\begin{prf}
Since $\g_\C$ and $\cH$ are Banach spaces, 
it suffices to show that the graph of $\ev_\phi|_\fg$ is closed. 
This follows from the continuity of the linear functional 
\[ \g \to \C, \quad x \mapsto (\cL_x \phi)(m) 
= \dd\phi(m)\beta(x)(m)\] 
which follows from the smoothness of $\phi$ and 
our continuity assumption on $\beta$. 
\end{prf}

\begin{rem}
Let $m \in M$ and $x \in \g$ be such that $\Phi^{\beta(x)}_t(m)$ is defined. We 
have seen in Lemma~\ref{lem:vectrafo} that 
\[ \beta(e^{t\ad x}y)\res_{M_t} = (\Phi^{\beta(x)}_{-t})_*\beta(y) 
\quad \mbox{ for } \quad y \in \g.\] 
For the corresponding Lie derivatives, this means that 
\begin{equation}
  \label{eq:trarel}
(\cL_{\beta(e^{t\ad x}y)} \phi)(m) 
= \Big(\cL_{\beta(y)}(\phi \circ \Phi^{\beta(x)}_{-t})\Big)(\Phi^{\beta(x)}_t(m)) 
\quad \mbox{ for } \quad  m \in M_t, \phi \in C^\infty(M).
\end{equation} 
\end{rem}

\begin{thm} \mlabel{thm:4.8} Let $K$ be a smooth positive definite kernel 
on the manifold $M$ compatible with the smooth right action $(\beta,\sigma)$ of $(\g,H)$, 
where $\g = \fh \oplus \fq$ is a symmetric 
Banach--Lie algebra and $H$ is a connected Lie group with Lie algebra~$\fh$. 
Let $G^c$ be a simply connected Lie group with Lie algebra 
$\g^c = \fh + i\fq$. 
Then there exists a unique smooth unitary representation $(\pi^c,\cH_K)$ 
such that 
\begin{itemize}
\item[\rm(i)] $\oline{\dd\pi^c}(x) = \cL_x$ for $x \in \fh$. 
\item[\rm(ii)] $\oline{\dd\pi^c}(iy) = i\cL_y$ for $y \in \fq$. 
\end{itemize}
\end{thm}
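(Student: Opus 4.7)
The plan is to invoke Merigon's integrability criterion (Theorem~\ref{T:intcrit}) for the splitting $\g^c = \fh \oplus i\fq$, using the representation $\alpha\colon \g^c \to \End(\cD)$ defined by $\alpha(x) := \cL_x$ for $x \in \fh$ and $\alpha(iy) := i\cL_y$ for $y \in \fq$. Density of $\cD$ is Proposition~\ref{lem:3.5}, strong continuity of $\alpha$ is Lemma~\ref{lem:3.6}, and skew-symmetry of $\alpha$ on $\fh \cup i\fq$ is immediate from the $\beta$-compatibility of $K$. Essential skew-adjointness of the generators follows from Lemma~\ref{lem:3.1} for $x \in \fh$ (identifying $\oline{\cL_x}$ with $\dd\pi^H(x)$) and from Corollary~\ref{C:esssa} for $iy \in i\fq$ (since $\cL_y$ is essentially selfadjoint with $\cD$ a core). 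Uniqueness of $\pi^c$ will be automatic, because $\g^c = \fh + i\fq$ and $G^c$ is connected.

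What remains is to verify, for every $x \in \fh \cup i\fq$ and $z \in \g^c$, the two hypotheses $e^{\oline{\alpha(x)}}\cD \subeq \cD$ and $e^{\oline{\alpha(x)}}\alpha(z)e^{-\oline{\alpha(x)}} = \alpha(e^{\ad x}z)$ of Theorem~\ref{T:intcrit}. For $x \in \fh$ the unitary $e^{\oline{\alpha(x)}} = \pi^H(\exp x)$ admits an explicit geometric description: it permutes the generators $K_m$ via \eqref{eq:kerinv2}, and iterating the chain-rule identity
\[ \cL_w \,\pi^H(\exp x)\phi = \pi^H(\exp x)\,\cL_{e^{-\ad x}w}\phi, \qquad w \in \g_\C,\]
which is a direct consequence of the transformation rule $(\sigma_{\exp x})_*\beta(y) = \beta(e^{-\ad x}y)$ of Remark~\ref{rem:3.2}, yields both conditions on $\cD$.

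The genuinely nontrivial case is $x = iy \in i\fq$, for which the unitary $U_y(t) := e^{it\oline{\cL_y}}$ has no direct geometric realization on $M$. The idea is to exploit analyticity. Lemma~\ref{lem:ode}, combined with real-analyticity of integral curves of the ODE $\eta'(t) = \cL_y\eta(t)$ (cf.~\cite{Sh84}), shows that every $K_m$ is an analytic vector for $\oline{\cL_y}$. Applying the same reasoning jointly to the multivariate smooth maps $(t_0,\dots,t_n)\mapsto K_{\gamma(t_0,\ldots,t_n)}$ built from composed flows of $\beta(y),\beta(z_1),\ldots,\beta(z_n)$ as in the proof of Proposition~\ref{lem:3.5}, one obtains joint real-analyticity in all time variables; analytic continuation of $t_0$ into the imaginary axis, combined with Lemma~\ref{lem:vectrafo}, then yields
\[\cL_{z_1}\cdots\cL_{z_n}\, U_y(t)K_m = U_y(t)\,\cL_{e^{it\ad y}z_1}\cdots\cL_{e^{it\ad y}z_n}K_m,\]
which proves both $U_y(t)K_m \in \cD$ and the intertwining relation on $\cH_K^0$. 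Propagation from $\cH_K^0$ to $\cD$ then uses the closedness of each $\cL_z$ (Proposition~\ref{prop:graphclosed}(iii)) together with the fact that $\cH_K^0$ is a core.

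The step I expect to be the main obstacle is this multivariate analytic-continuation argument in the locally convex (possibly Fr\'echet) setting: one needs joint real-analyticity of the time-parameter maps and uniform control of the radius of convergence, neither of which is automatic when the vector fields $\beta(y)$, $y \in \fq$, are only locally integrable.
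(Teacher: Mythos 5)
Your overall skeleton matches the paper's: the same splitting $\g^c = \fh \oplus i\fq$ fed into Theorem~\ref{T:intcrit}, with density from Proposition~\ref{lem:3.5}, strong continuity from Lemma~\ref{lem:3.6}, essential skew-adjointness from Lemma~\ref{lem:3.1} and Corollary~\ref{C:esssa}, and the $\fh$-directions handled geometrically through $\pi^H(\exp x)$ and Remark~\ref{rem:3.2}. The gap is exactly where you locate it, in the $i\fq$-direction, and it is not a technicality you can wave at: your plan rests on joint real-analyticity of the maps $(t_0,\dots,t_n)\mapsto K_{\gamma(t_0,\dots,t_n)}$ and on continuing $t_0$ to the imaginary axis by an analytic-vector argument. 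First, for $z_j\in\fh$ these maps are only smooth, not analytic: the action $\sigma$ of $H$ is merely smooth and nothing makes $K_m$ an analytic vector for $\cL_{z_j}$, $z_j \in \fh$. Second, even in the $\fq$-directions the analytic-vector radius you get from local integrability is only some $\eps_m>0$, which can be arbitrarily small, so a power-series continuation around $t_0=0$ can never reach $U_y(t)=e^{it\oline{\cL_y}}$ for all $t$ (not even $t=1$). So the central identity $\cL_{z_1}\cdots\cL_{z_n}U_y(t)K_m = U_y(t)\cL_{e^{it\ad y}z_1}\cdots\cL_{e^{it\ad y}z_n}K_m$ is not established by your argument. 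A further, smaller problem: your propagation from $\cH_K^0$ to $\cD$ via ``closedness plus core'' does not work as stated, because $\cH_K^0$ is only known to be a core for the individual operators $\cL_z$ with $z\in\fh\cup\fq$ (resp.\ $\fh\cup i\fq$), not for compositions, and not for $\cL_w$ with general $w\in\g^c$.

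The paper's proof avoids both obstacles by continuing \emph{weakly} and in a \emph{single} variable. For $x\in\fq$, the Geometric Fr\"ohlich Theorem gives $K_m\in\cD(e^{s\cL_x})$ with $e^{t\cL_x}K_m=K_{\Phi^{\beta(x)}_t(m)}$, so by spectral calculus $z\mapsto e^{z\cL_x}K_m$ is holomorphic on the strip $0<\Re z<s$ and continuous up to $\Re z=0$ -- no lower bound on $s$ is ever needed, since the whole imaginary axis lies on the boundary of the strip. The intertwining relation for real $t$, namely $\la e^{t\cL_x}K_m,\cL_{\tau(y)}K_n\ra=-\la\cL_{e^{-t\ad x}y}K_m,e^{t\cL_x}K_n\ra$ (from \eqref{eq:trarel}, i.e.\ Lemma~\ref{lem:vectrafo}), is then extended to the strip by the analytic extension lemma of Klein--Landau \cite{KL81}, using Lemma~\ref{lem:3.6} for the continuity in $y$, and evaluated at $z=i$; this yields only the \emph{single} commutation relation $\cL_y e^{i\cL_x}K_m=e^{i\cL_x}\cL_{e^{-i\ad x}y}K_m$ on $\cH_K^0$. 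The passage to $\cD^1$ and $\cD$ is then done by a duality argument: one shows $e^{i\cL_x}\varphi\in\cD\big((\cL_y|_{\cH_K^0})^*\big)$ and uses $(\cL_y|_{\cH_K^0})^*=-\cL_y$ for $y\in\fh\cup i\fq$ (Lemma~\ref{lem:3.1}, Corollary~\ref{C:esssa}), extending to all of $\g^c$ by linearity; iterating the resulting relation $e^{-i\cL_x}\cL_y e^{i\cL_x}|_{\cD^1}=\cL_{e^{-i\ad x}y}|_{\cD^1}$ gives $e^{i\cL_x}\cD\subeq\cD$. If you replace your multivariate continuation and core-approximation steps by this weak strip-continuation plus adjoint argument, your proof becomes the paper's.
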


\begin{prf} {\bf Step 1.} For $x \in \fq$, we consider the associated selfadjoint 
operator $\cL_x$ on $\cH_K$. Then we obtain a hermitian 
one-parameter group $e^{t\cL_x}$ of unbounded selfadjoint operators 
on $\cH_K$ defined by a spectral measure $P$ of $\cL_x$ via 
\[ e^{t\cL_x} := \int_\R e^{tx}\, dP(x) 
\quad \mbox{ on } \quad 
\cD(e^{t\cL_x}) = \Big\{ \phi \in \cH^K \: \int_\R e^{2tx} dP^\phi(x) < \infty 
\Big\},\] 
where $P^\phi(\cdot) = \la P(\cdot)\phi,\phi\ra$. 
In particular, we have, for each 
$t \in \R$, a well-defined domain 
\[ \cD_t= \cD(e^{t\cL_x}) \subeq \cH_K.\] 
Let $m \in M_s$. Then 
Theorem~\ref{T:Froelichgeo} implies that $K_m \in \cD_s$ with 
\[ e^{t\cL_x}K_m= K_{\Phi^{\beta(x)}_t(m)}\quad\text{for}\quad 0\leq t\leq s.\] 
The curve 
$t \mapsto e^{t\cL_x} K_m$ in $\cH_K$ extends to a holomorphic 
function $e^{z\cL_x} K_m$ defined on an open neighborhood of the strip 
$0 \leq \Re z \leq s$. Accordingly, for $y \in \g$, the function 
\[ t \mapsto \la e^{t\cL_x} K_m, \cL_y K_n \ra \] 
extends to a holomorphic function 
\[ z \mapsto \la e^{z\cL_x} K_m, \cL_y K_n \ra. \] 
For $x \in \fq$, $y \in \g$ and $m,n \in M_s$ we  
get:
%\begin{footnote}
%{?: In the following calculation,  \eqref{eq:trarel} is used in a crucial way. 
%To draw the integrability results in this and the next section from a  common source, 
%the key point is this calculation. \eqref{eq:trarel} builds on the local integrability 
%of the vector fields, but I think this is too strong, as the linear case shows.
%}
%\end{footnote}
\begin{align*}
\la e^{t\cL_x}K_m, \cL_{\tau(y)} K_n \ra 
&=  \la K_{\Phi^{\beta(x)}_t(m)}, \cL_{{\tau(y)}} K_n \ra 
=  \oline{ (\cL_{\tau(y)} K_n)(\Phi^{\beta(x)}_t(m))} \\
&=  \oline{ (\cL_{e^{t\ad x}{\tau(y)}}(K_n \circ \Phi^{\beta(x)}_t))(m)}\qquad \qquad 
\mbox{ by \eqref{eq:trarel}} \\
&
= \la \cL_{e^{t\ad x}{\tau(y)}}^* K_m, e^{t\cL_x}K_n \ra \\
&= -\la \cL_{e^{-t\ad x}y} K_m, e^{t\cL_x}K_n \ra 
\end{align*}
By analytic extension (cf.\ \cite[Lemma~2]{KL81}), 
we now arrive with Lemma~\ref{lem:3.6} at the relation 
\begin{align*}
\la e^{z\cL_x}K_m, \cL_{\tau(y)}  K_n \ra 
&= -\la \cL_{e^{-z\ad x}y} K_m, e^{\oline z\cL_x}K_n \ra 
\quad \mbox{ for } \quad 0 \le \Re z \leq s 
\end{align*}
and we get in particular 
\begin{align*}
\la e^{i\cL_x}K_m, \cL_{\tau(y)}  K_n \ra 
&= -\la \cL_{e^{-i\ad x}y}K_m, e^{-i\cL_x}K_n \ra 
\end{align*}
This relation leads to 
$e^{i\cL_x} K_m \in \cD_y$ with 
\[ \cL_y e^{i\cL_x} K_m = e^{i\cL_x} \cL_{e^{-i\ad x}y} K_m 
\quad \mbox{ for } \quad m \in M_s, y \in \g_\C.\] 
From $M = \bigcup_{s > 0} M_s$ we thus derive 
$e^{i\cL_x} \cH_K^0 \subeq \cD^1$  
with 
\[ \cL_y e^{i\cL_x} \res_{\cH_K^0} = e^{i\cL_x} \cL_{e^{-i\ad x}y} 
\res_{\cH_K^0},\] 
resp., 
\[e^{-i\cL_x} \cL_y e^{i\cL_x} \res_{\cH_K^0} = \cL_{e^{-i\ad x}y} 
\res_{\cH_K^0}\quad \mbox{ for } \quad x \in \fq, y \in \g_\C.\]

{\bf Step 2.} 
It follows from Step 1 that, for $m,n\in M$, $x\in\fq$ and $y\in\g^c$, we have
$$\la e^{i\cL_x}K_m, \cL_{y}  K_n \ra 
= \la K_m,\cL_{e^{-i\ad x}y} e^{-i\cL_x}K_n \ra $$
and by continuity
$$\la e^{i\cL_x}\varphi, \cL_{y}  K_n \ra 
= \la \varphi,\cL_{e^{-i\ad x}y} e^{-i\cL_x}K_n \ra=-\la \cL_{e^{-i\ad x}y}\varphi, e^{-i\cL_x}K_n \ra $$
for $\varphi\in\cD_{e^{-i\ad x}y}$. Hence $e^{i\cL_x}\varphi\in\cD((\cL_y|_{K_n})^*)$.
If $y\in \fh\cup i\fq$, then by Lemma~\ref{lem:3.1} and Corollary~\ref{C:esssa} we have 
$(\cL_y|_{\cH_K^0})^*=-\cL_y.$
It follows that
\[ e^{i\cL_x} \cD^1 \subeq \cD^1\quad\text{and}\quad e^{-i\cL_x} \cL_y e^{i\cL_x} \res_{\cD^1} = \cL_{e^{-i\ad x}y}|_{\cD^1}\quad \mbox{ for } \quad x \in \fq, y \in \g^c\]
and the commutation relation implies $e^{i\cL_x} \cD \subeq \cD.$

{\bf Step 3.}  For $x \in \fh$ we have 
\[ e^{\cL_x}\phi = \pi^H(\exp x)\phi = \phi \circ \sigma_{\exp x} 
\quad \mbox{ for } \quad \phi \in \cH_K.\] 
Therefore the relation 
\[ e^{-\cL_x} \cL_y e^{\cL_x} = \cL_{e^{-\ad x}y} \quad \mbox{ for } \quad 
x \in \fh, y \in \g^c \] 
follows from 
\[ (\sigma_{\exp x})_* \beta(y) = \beta(e^{-\ad x}y) \quad 
\mbox{ for } \quad x \in \fh, y \in \g\] 
(Remark~\ref{rem:3.2}). Since $\cD$ is dense by Proposition~\ref{lem:3.5}, we
 can now apply Theorem~\ref{T:intcrit} to complete the proof. 
\end{prf}

\begin{rem} Note that (i) implies that the restriction of 
$\pi^c$ to the integral subgroup $\la \exp \fh \ra \subeq G^c$ 
induces the same representation as $\pi^H$ on the universal 
covering group $\tilde H$ of~$H$. 
\end{rem}

\subsection{Examples} 

We now discuss a series of examples illustrating various situations in 
which the Integrability Theorem applies. 

\begin{exs} \mlabel{ex:3.15} (a) Let 
$(G,\tau)$ be a symmetric Banach--Lie group with Lie algebra 
$\g = \fh + \fq$, $H \subeq G$ the integral subgroup corresponding to the 
closed Lie subalgebra $\fh$, 
and $U = UH \subeq G$ be an open subset. We write $g^\sharp = \tau(g)^{-1}$ for 
$g \in G$. 
Then a function $\phi \: UU^\sharp\to \C$ is called 
{\it $\tau$-positive definite} if the kernel 
\[ K(x,y) := \phi(xy^\sharp) \] 
is positive definite. 

Then $\sigma_h(g) := gh$ and $\beta(x)(g) := g.x$ define a smooth 
right action of $(\g,H)$ on $U$ and the kernel $K$ is compatible 
with $(\beta,\sigma)$. Therefore we obtain for each simply connected 
Lie group $G^c$ a corresponding unitary representation $\pi^c$
on $\cH_K \subeq C^\infty(U,\C)$ with 
\[  (\pi^c(h)\psi)(g) = \psi(gh) \quad \mbox{ for } \quad 
g\in U, h \in H\]  
and 
\[  \dd\pi^c(iy)\psi = i\cL_y \psi \quad \mbox{ for } \quad 
\psi\in\cH_K^\infty, y \in \fq.\] 

(b) If $\rho \: G \to \GL(V)$ is a smooth action of the Banach--Lie group 
$G$ on the locally convex space and 
$( \cdot,\cdot)$ is a positive definite scalar product on $V$ 
satisfying 
\[ (\rho(g)v,w) = (v,\rho(g^\sharp)w) \quad \mbox{ for } \quad 
g \in G, v,w\in V,\] 
then $\rho$ defines in particular a smooth right action 
$(\beta,\sigma)$ on the manifold $M := V$ by 
\[ \sigma_h = \rho(h)^{-1} \quad \mbox{ and } \quad 
\beta(x)v = - \dd\rho(x)v,\] 
and the kernel $K(v,w) := (w,v)$ is compatible. 
Therefore we obtain for each simply connected 
Lie group $G^c$ a smooth unitary representation 
on $\cH_K$, which can be identified with the Hilbert completion 
of $V \subeq \cH_K^\infty$, and 
$\pi(h)\res_V = \rho(h)$ for $h \in H$. 

This situation occurs in particular for finite dimensional 
representations $(\rho, V)$ of real reductive Lie groups. 
In this case $\tau$ is a Cartan involution on $G$, 
a scalar product as above always exists, 
and the group $G^c$ is compact. Here the passage between unitary 
representations of $G^c$ and representations of $G$ is 
Weyl's Unitary Trick, used by H.~Weyl to study representations of 
$G$ by means of unitary representations of $G^c$. 
\end{exs}

\begin{ex} 
Let $V$ be a Banach space and $\Omega \subeq V$ be an open domain. 
Further, let $\mu$ be a positive measure on the smallest 
$\sigma$-algebra in the topological dual space $V'$ 
for which all evaluation maps $\ev_v(\alpha) := \alpha(v)$ are measurable. 
We assume that the Laplace transform 
\[ \cL(\mu)(x) := \int_{V'} e^{-\alpha(x)}\, d\mu(\alpha) \] 
defines a smooth function on $\Omega$. This is always the case 
if $\dim V < \infty$ and $\cL(\mu)(x) < \infty$ for $x \in \Omega$ 
(cf.\ \cite[Prop.~V.4.6]{Ne00}). Then 
\[ K(x,y) := \cL(\mu)\Big(\frac{x+y}{2}\Big) \] 
defines a smooth positive definite kernel on $\Omega$. 

We consider the symmetric Lie group 
$(G,\tau) = (V, -\id_V)$ for which $H = \{0\}$ and 
$\g = \fq = V$. For every $v \in V$, the corresponding 
constant vector field $\beta(v)(x) = v$ is locally integrable 
with maximal local flow given by $\Phi(t,x) = x + tv$ whenever 
$x+tv \in\Omega$. We thus obtain a smooth right action 
of $(\g,H)$ on $\Omega$. Since the kernel depends only on $x+y$, 
the vector fields $\beta(v)$ are $K$-symmetric. Therefore 
Theorem~\ref{thm:4.8} guarantees the existence of a corresponding 
unitary representation of the Banach--Lie group $G^c = iV$ on the corresponding 
reproducing kernel Hilbert space $\cH_K$. 

As the kernel $K$ extends holomorphically to the tube 
domain $T_\Omega = \Omega + i V$ by 
\[ K(z,w) := \cL(\mu)\Big(\frac{z+\oline w}{2}\Big), \] 
the corresponding Hilbert space can be identified with a space 
of holomorphic functions on $T_\Omega$ and the unitary 
representation of $G^c= iV$ is simply given by
$(\pi^c(iv)f)(z) := f(z+iv)$. 

One can show that $\cH_K \cong L^2(V',\mu)$ and that the elements 
of $\g = V$ act naturally by multiplication operators on $L^2(V',\mu)$. 
In particular, the operator $\cL^K_v$ is unbounded from above and below 
if the support of the measure $(\ev_v)_*\mu$ on $\R$ has this property. 
\end{ex}

\begin{ex} (a) Let $\cA$ be a real Banach-$*$-algebra and 
$G := \cA^\times$ be its group of units which is a Banach--Lie group 
whose Lie algebra $\g$ is $(\cA,[\cdot,\cdot])$. If $\cA$ is not unital, 
we embed it into $\cA_1 := \R \1\oplus \cA$ and define 
$\cA^\times := (\1 + \cA) \cap \cA_1^\times$. 
Then $G$ is a symmetric Banach--Lie group with respect to 
$\tau(a) := (a^*)^{-1}$ and $H = \{ a \in \cA^\times \: a^* = a^{-1}\}$ 
is the unitary/orthogonal group of $\cA$. 
If $\cA_\C$ is the complexification of $\cA$ and $(x+iy)^* := x^* - iy^*$ 
is the antilinear extension of $*$ to $\cA_\C$, then the unitary group 
$\U(\cA_\C)$ is a Banach--Lie group with Lie algebra 
\[ \g^c = 
\{ a \in \cA \: a^* = -a\} + 
i \{ b \in \cA \: b^* = b\} = \{ c \in \cA_\C \: c^* =  -c\}= \fu(\cA_\C).\]
Accordingly, we write $G^c$ for the simply connected covering group 
of the identity component $\U(\cA)_0$.

Let $S := \{ a \in \cA \: \|a\| < 1\}$ be the open unit ball 
in $\cA$ and observe that this is a $*$-semigroup. 
Let $\pi_S \: S \to B(\cH)$ be a $*$-representation of $S$ and 
$v \in \cH$ be a smooth vector for this representation, so that we obtain 
the positive definite smooth function 
$\phi(s) := \la \pi(s)v,v\ra$ and the corresponding smooth kernel 
$K(s,u) := \phi(su^*)$ on $S \times S$. 

Restricting the right multiplication action of $G = \cA^\times$ to 
$S$, we obtain a smooth right action of $(\g,H)$, for which 
the vector fields are given by $\beta(x)a = ax$. We then have 
\[ (\cL_x^1 K)(s,u) 
= \derat0 \phi(se^{tx}u^*)
= \derat0 \phi(s(ue^{tx^*}))
= (\cL_{x^*}^2 K)(s,u),\] 
so that the kernel $K$ is compatible with the smooth right action. 

We thus obtain from Theorem~\ref{thm:4.8} a smooth unitary representation of $G^c$ 
on the reproducing kernel space $\cH_\phi \subeq C^\infty(S)$ which 
can be identified with the cyclic subspace $\cH_v := \oline{\Spann \pi(S)v} 
\subeq \cH$. 

(b) If $\cA = M_n(\R)$ with $A^* = A^\top$ and 
$s \in \R$ is such that the kernel 
\[ K(x,y) 
:= \det(\1 - xy^\top)^{-s}  = \det(\1 - xy^*)^{-s} \] 
on the open unit ball $S$ is positive definite, then we likewise
 obtain a unitary representation of the group 
$G^c = \tilde\U_n(\C)$ on the corresponding reproducing kernel space. 

In this case the extension can also be obtained from the observation 
that the positive definiteness of the kernel $K$ implies the positive 
definiteness of the kernel 
\[ K_\C(x,y) := \det(\1 - xy^*)^{-s} \] 
on the open unit ball $S_\C \subeq M_n(\C)$  
(\cite[Thm.~A.1]{NO14}). 
\end{ex}

\begin{ex} (Application to operator-valued kernels) 

(a) Let $(G,\tau)$ be a symmetric Banach--Lie group 
% and $S \subeq G$ be an open subsemigroup 
%invariant under multiplication with $H$ and the involution 
and write $g^\sharp = \tau(g)^{-1}$. 
We consider a smooth right action of $G$ on the 
manifold $X$, a complex Hilbert space $V$, and a kernel $Q \: X \times X \to B(V)$. 
We assume that we have a function $J \: G \times X \to \GL(V), (g,x) \mapsto J_g(x)$ 
satisfying the cocycle condition 
\[ J_{g_1 g_2}(x) = J_{g_1}(x) J_{g_2}(x.g_1) \quad \mbox{ for } \quad g_1, g_2 \in G, x \in X, \] 
and that the kernel $Q$ satisfies the corresponding invariance relation 
\[ J_g(x) Q(x.g, y) = Q(x, y.g^\sharp) J_{g^\sharp}(y)^* 
\quad \mbox{ for } \quad x,y \in X, g \in G\]  
(cf.~\cite[Prop.~II.4.3]{Ne00}). 
On the set $M := X \times V$, we then obtain a $G$-right action by 
\[ (x,v).g := (x.g, J_g(x)^*v).\] 
We also obtain a positive definite kernel 
\[ K \: M \times M \to \C, \quad 
K((x,v), (y,w)) := \la Q(x,y)w, v \ra\]  
which satisfies the natural covariance condition 
\begin{align*}
K((x,v).g, (y,w)) 
&= K((x.g, J_g(x)^*v), (y,w)) 
= \la Q(x.g, y) w, J_g(x)^* v \ra\\ 
&= \la J_g(x) Q(x.g, y) w, v \ra 
= \la Q(x, y.g^\sharp) J_{g^\sharp}(y)^* w, v \ra \\
& = K((x,v), (y.g^\sharp, J_{g^\sharp}(y)^*w))
= K((x,v), (y,w).g^\sharp).
\end{align*}

Let $X_+ \subeq X$ be an open $H$-invariant subset on which the kernel 
$Q$ is positive definite, so that $K$ is positive definite on 
$M_+ := X_+ \times V$. 
The corresponding reproducing kernel Hilbert space 
$\cH_K \subeq \C^{M_+}$ consists of functions that are continuous and antilinear 
in the second argument, and it is easy to see that the map 
\[  \Gamma \: \cH_Q \to \cH_K,\quad 
\Gamma(f)(x,v) := \la f(x), v \ra \] 
is unitary. For the $G$-action $(g.f)(x) := J_g(x) f(x.g)$ on $V^X$, we have 
\[ \Gamma(g.f)(x,v) = \la J_g(x) f(x.g), v \ra 
= \la f(x.g), J_g(x)^* v \ra 
= \Gamma(f)((x,v).g), \] 
so that $\Gamma$ intertwines it with the action on $\C^M$ by 
\[ (g.F)(x,v) := F((x,v).g).\]

Let us assume that the $G$-action on $M_+$ is smooth, i.e., that 
$G \times X_+ \times V \to V, (g,x,v) \mapsto J_g(x)^*v$ is smooth. 
Then we obtain a smooth right action of $(\g,H)$ on $M_+$ compatible 
with the kernel $K$, and thus 
Theorem~\ref{thm:4.8} yields a unitary representation 
of $G^c$ on the Hilbert $\cH_K \cong \cH_Q$. 

(b) A concrete example of this type is obtained from the kernel 
$K(x,y) := e^{\la x,y \ra}$ on the infinite dimensional real Hilbert space $\cH$. 
Then $G := \GL(\cH)$ is a Banach--Lie group and 
$\tau(g) := (g^\top)^{-1}$ is an involutive automorphism satisfying 
\[ K(g.x, y) = K(x,g^\top.y) \quad \mbox{ for } \quad 
g \in G, x,y \in \cH.\] 
As the unitary group $\U(\cH_\C)$ is simply connected by Kuiper's Theorem, 
Theorem~\ref{thm:4.8} yields a unitary representation 
of $\U(\cH_\C)$ on the Fock space $\cH_K \subeq \C^\cH$ (which can also obtained 
more directly by analytic extension of the kernel). 
\end{ex}

\subsection{A generalization of the L\"uscher--Mack Theorem}

In this subsection we prove a generalization of the L\"uscher--Mack Theorem 
which applies to a class of semigroups that need not have a 
polar decomposition. 

\begin{thm}\mlabel{thm:4.11}  {\rm(Generalized L\"uscher--Mack Theorem)} 
Let $(G,\tau)$ be a connected symmetric Banach--Lie 
group. 
For $H := (G^\tau)_0$, we consider an open subsemigroup $S \subeq G$ with $SH=S$, and
which is invariant under the involution $s \mapsto s^\sharp := \tau(s)^{-1}$.   
Then, for every non-degenerate strongly continuous smooth representation\begin{footnote}{Smoothness of a representation means that the linear subspace 
$\cH^\infty$ of all vectors with smooth orbit map is dense in $\cH$.}
\end{footnote}
$\pi$ of the 
involutive semigroup $(S,\sharp)$, the following assertions hold: 
\begin{itemize}
\item[\rm(a)] There exist a smooth unitary representation $\pi^H$ of $H$ in $\cH$ and 
a Lie algebra representation $\alpha:\fg_\C\rightarrow \End(\cD)$ on the dense domain $\cD\subeq\cH$
such that $\alpha(x)$ is essentially skew-adjoint for $x\in \fh\cup i\fq$ and
\begin{itemize}
\item[\rm(i)] $\pi(sh)=\pi(s)\pi^{H}(h)$ for $s\in S$ and $h\in H$. 
\item[\rm(ii)] If $x\in\fq$ and $\exp tx\in S$ for $t>0$, then $\pi(\exp tx)=e^{t\overline{\alpha(x)}}$. 
\item[\rm(iii)] $\cD$ consists of smooth vectors for $\pi^H$ and $\alpha(x)v=\dd\pi^H(x)v$ for $v\in\cD$.
\end{itemize}
\item[\rm(b)] The representation $\alpha|_{\fg^c}$ integrates to a (smooth) unitary representation of any simply connected Lie group 
$G^c$ with Lie algebra $\g^c$.
\end{itemize} 
\end{thm}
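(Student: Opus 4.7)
The plan is to realise $\pi$ on reproducing kernel spaces built from matrix coefficients and then to apply Theorem~\ref{thm:4.8} to each cyclic summand. Fix a smooth vector $v\in\cH^\infty$ and put
\[ \phi_v(g):=\la \pi(g)v,v\ra,\qquad K_v(s,t):=\phi_v(st^\sharp)\quad\text{on}\ S\times S.\]
Since $S$ is $\sharp$-invariant and closed under multiplication, $st^\sharp\in S$, so $K_v$ is smooth, and positive definiteness follows from $\pi(s^\sharp)=\pi(s)^*$ via
\[ \sum_{i,j}c_i\overline{c_j}K_v(s_i,s_j)=\Big\|\sum_i\overline{c_i}\,\pi(s_i)^*v\Big\|^2.\]
Thus $\phi_v$ is $\tau$-positive definite in the sense of Example~\ref{ex:3.15}(a). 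The assignment $\pi(s^\sharp)v\mapsto K_{v,s}:=K_v(\cdot,s)$ extends to a unitary $U_v$ from $\cH_v:=\overline{\pi(S)v}$ onto $\cH_{K_v}$.

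By Example~\ref{ex:3.15}(a), $M:=S$ becomes a smooth right $(\g,H)$-space via $\sigma_h(g)=gh$ and $\beta(x)(g)=g.x$, and $K_v$ is $\beta$-compatible: the identity $(te^{ry})^\sharp=e^{-r\tau y}t^\sharp$ gives $\cL_{\beta(x)}^1 K_v=-\cL_{\beta(\tau x)}^2 K_v$. Theorem~\ref{thm:4.8} then produces a smooth unitary representation $\pi^c_v$ of $G^c$ on $\cH_{K_v}$ together with a Lie algebra representation $\alpha_v:\g_\C\to\End(\cD_v)$, while Theorem~\ref{T:Froelichgeo} supplies the formula $e^{t\cL_{\beta(x)}}K_{v,s}=K_{v,s\exp tx}$ whenever $s\exp tx$ lies in the flow domain. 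Transporting via $U_v$, and using $(sh^{-1})^\sharp=hs^\sharp$ together with $(s\exp tx)^\sharp=(\exp tx)s^\sharp$ for $x\in\fq$ (since $\tau x=-x$), I would read off that $\pi^c_v(h)$ corresponds to $\pi(s^\sharp)v\mapsto\pi(hs^\sharp)v$, and that $e^{t\oline{\alpha_v(x)}}\res_{\cH_v}=\pi(\exp tx)\res_{\cH_v}$ whenever $x\in\fq$ and $\exp tx\in S$; this is (a)(ii) on the cyclic summand.

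For the global $\pi^H$ it is simplest to bypass the kernel realisation and set $\pi^H(h)\pi(s)w:=\pi(hs)w$ on the dense subspace $\pi(S)\cH$ (using $HS=S$). Isometry follows from
\[ \la\pi(hs_i)w_i,\pi(hs_j)w_j\ra=\la\pi((hs_j)^\sharp hs_i)w_i,w_j\ra=\la\pi(s_j^\sharp s_i)w_i,w_j\ra,\]
the last equality using $h^\sharp h=\1$. Hence $\pi^H(h)$ extends uniquely to a unitary on $\cH$; $\pi(sh)=\pi(s)\pi^H(h)$ and multiplicativity in $h$ are immediate, and strong continuity and smoothness of orbit maps on $\cH^\infty$ follow from smoothness of $(h,s)\mapsto hs$ and of $s\mapsto\pi(s)w$ for $w\in\cH^\infty$. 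By construction $\pi^H\res_{\cH_v}$ coincides under $U_v$ with $\pi^c_v\res_H$.

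Finally, I would decompose $\cH=\bigoplus_\iota\cH_{v_\iota}$ into mutually orthogonal $\pi(S)$-cyclic subspaces with $v_\iota\in\cH^\infty$; this is possible because $\pi(s)^*=\pi(s^\sharp)$ makes every $\pi(S)$-invariant closed subspace reducing, so the orthogonal projection onto it commutes with all $\pi(s)$ and therefore preserves $\cH^\infty$. Setting $\pi^c:=\bigoplus_\iota\pi^c_{v_\iota}$, $\cD:=\bigoplus_\iota^{\rm alg}\cD_{v_\iota}$ and $\alpha:=\bigoplus_\iota\alpha_{v_\iota}$ assembles the pieces; (a)(i)--(iii) come from the third paragraph together with Lemma~\ref{lem:3.1} applied summand by summand for the identification of the derived $\pi^H$-action with $\alpha\res_\fh$, and (b) is Theorem~\ref{thm:4.8} summand-wise. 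The delicate step is the second paragraph: one must verify that the abstract $G^c$-action produced by Theorem~\ref{thm:4.8} on $\cH_{K_v}$ literally restricts to $\pi\res_S$ on $\cH_v$, i.e.\ that the Fr\"ohlich exponentials $e^{t\oline{\alpha_v(x)}}$ coincide with $\pi(\exp tx)\res_{\cH_v}$ for $x\in\fq$ with $\exp tx\in S$, so that the two independent constructions of $\pi^H$ (directly on $\pi(S)\cH$ and as a piece of the integrated $G^c$-representation) genuinely agree.
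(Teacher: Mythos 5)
Your proposal is correct and follows essentially the same route as the paper: decompose $\pi$ into cyclic summands generated by smooth vectors, realize each summand in the reproducing kernel space of the smooth positive definite kernel $K_v(s,t)=\la\pi(st^\sharp)v,v\ra$ on $S\times S$, and apply Theorem~\ref{thm:4.8} via Example~\ref{ex:3.15}(a). The additional details you work out explicitly -- the Zorn-type decomposition into cyclic pieces with smooth generators, the direct construction of $\pi^H$ on $\pi(S)\cH$, and the identification of the Fr\"ohlich exponentials $e^{t\oline{\alpha(x)}}$ with $\pi(\exp tx)$ -- are precisely the points the paper delegates to the citation of \cite{Ne00} and to Example~\ref{ex:3.15}(a).
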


\begin{prf} First we note that every non-degenerate smooth representation $(\pi, \cH)$ 
of $(S,\sharp)$ is a direct sum of cyclic ones generated by a smooth vector 
(see \cite[ Lemma II.4.2(iv)]{Ne00}). 

If the representation is generated by the smooth vector $v$, then the 
kernel $K(s,t) := \la \pi(st^*)v,v\ra$ on $S \times S$ 
is smooth and positive definite and satisfies 
\[ K(su, t) = K(s,tu^*) \quad \mbox{ for } \quad s,t,u \in S.\] 
Moreover, the unitary isomorhism
\[ \Gamma \: \cH \to \cH_K \subeq C^\infty(S,\C), \quad 
\Gamma(w)(s) := \la \pi(s)w, v \ra \] 
is an intertwining operator, where the representation 
$\cH_K$ is given by 
\[ (\pi_K(s)f)(t) := f(ts). \] 
Hence the assertion 
follows from Example~\ref{ex:3.15}(a). 
\end{prf}

\begin{rem} If $G$ is finite dimensional, then every non-degenerate 
continuous representation of $S$ is smooth. In fact, 
for every $\phi \in C^\infty_c(S)$, the image of the operators defined by 
\[ \pi(\phi)v = \int_S \phi(s)\pi (s)v\, d\mu_G(s) \] 
are smooth vectors. Here $\mu_G$ denotes a left Haar measure on $G$. 
\end{rem}

\section{Local Representations}
\mlabel{sec:4a} 

In this section we give two proofs of P.E.T.~Jorgensen's 
theorem on the analytic continuation of local
representations. The first one  only applies to finite
dimensional groups. As Jorgensen's original proof, it relies
on Simon's Exponentiation Theorem \cite{Si72}. However, thanks to a new idea, 
it is much shorter and does not use axioms (L5) and (L6) from \cite{Jo86}.
The second proof requires globality of the representation on the subgroup 
$H$ and a smoothness condition but has the advantage of applying to 
infinite dimensional Lie groups as well. The latter 
version can be derived from 
Theorem~\ref{thm:4.8}.

\subsection{Local representations of finite dimensional groups}

\begin{defn} Let $(G,H,\tau)$ be a symmetric Lie group and $\fh=\ker(\tau-\1)$, 
$\fq=\ker(\tau + \1)$ as above. 
Let $\cH$ be a Hilbert space and $U \subeq G$ be an identity neighborhood. 
Assume that, for every $g\in U$,  we are given a 
densely defined operator $\pi(g) \: \cD(\pi(g)) \to \cH$ on $\cH$ 
and that there exists a continuous unitary representation 
$(\pi_H,\cH)$ of $H$ such that $\pi_H(h)\res_{\cD(\pi(h))} = \pi(h)$ 
for every $h \in U \cap H$. 
%We assume that 
%\begin{equation}\mlabel{eq:6.13}%\mlabel{eq:srel}
 %\pi(g^\sharp)\subeq\pi(g)^* \, .
%\end{equation}
%if $g^\sharp, g^{-1}\in U$.
%Here $g^\sharp = \tau (g)^{-1}$.
Let $\cD$ be a dense subspace in $\cH$. Then $(\pi,\cD,\cH)$
is called a {\it local representation of $(G,H,\tau)$} if
the following assumptions are satisfied.
\edz{added (LO)}\begin{itemize}
\item[(L0)] If $g,g^\sharp\in U$ then
$\pi (g^\sharp )\subeq \pi (g )^*$.
 \item[(L1)] $\cD\subeq\cD(\pi(g))$ for every $g\in U$.
 \item[(L2)] If $g_1$, $g_2$ and $g_1g_2$ are in $U$, then
 $$\pi(g_2)\cD\subeq \cD(\pi(g_1))\quad\text{and}\quad \pi(g_1g_2)\res_{\cD}
=\pi(g_1)\pi(g_2)\res_{\cD}.$$
 \item[(L3)] There exists a starlike $0$-neighborhood $U_\fq$
 in $\fq$ such that $\exp U_\fq\subeq U$ and for every $v\in\cD$
 $$\lim_{t\rightarrow 0}{\pi(\exp ty)}v=v.$$
 \end{itemize}
 
 It follows from (L1-3) and \cite[Theorem I.1]{Fro80} that, for $y\in\fq$,
 the local semigroup $\pi(\exp(ty))$ has a unique extension as a local selfadjoint
 semigroup $e^{tB_y}$, where $B_y$ is a selfadjoint operator with $\cD\subeq \cD(B_y)$. We will
 moreover assume:
\begin{itemize}
 \item[(L4)] If $y\in\fq$, then $B_y\cD\subeq\cD$. 
 \end{itemize}
\end{defn}

For $x\in\fh$, we write $B_x$ for the skew-adjoint generator of $\pi_H(\exp tx)$. 

\begin{thm}
Let $(\pi,\cD,\cH)$ be a local representation of the finite dimensional Lie group $G$. 
Let $\fg^c=\fh+ i\fq$ be 
the $c$-dual Lie algebra and $G^c$ the corresponding simply connected
Lie group. Then there exists a unitary representation $(\pi^c,\cH)$ of $G^c$ such that
\begin{enumerate}
 \item[\rm(i)] $\overline{\mathrm{d}\pi^c(x)}=B_x$ for $x\in\fh$,
 \item[\rm(ii)] $\overline{\mathrm{d}\pi^c(iy)}=iB_y$ for $y\in\fq$
\end{enumerate}
\end{thm}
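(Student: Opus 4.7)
The plan is to construct a Lie algebra representation $\alpha \: \fg^c \to \End(\cD)$ by setting $\alpha(x) := B_x\res_\cD$ for $x \in \fh$ and $\alpha(iy) := iB_y\res_\cD$ for $y \in \fq$, and then to integrate $\alpha$ via Simon's Exponentiation Theorem \cite{Si72}. The operators $B_y$, $y\in\fq$, are selfadjoint by the Fr\"ohlich argument recalled just before the theorem and satisfy $B_y\cD \subeq \cD$ by (L4); the operators $B_x$, $x\in\fh$, are skew-adjoint infinitesimal generators of $\pi_H(\exp tx)$. The task is then to verify that $\cD\subeq \cD(B_x)$ with $B_x\cD\subeq \cD$ for $x\in\fh$, and to establish the bracket identities $[\alpha(X),\alpha(Y)]v = \alpha([X,Y])v$ on $\cD$ for all $X,Y\in\fg^c$.

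The key step is to pipe (L2) through Fr\"ohlich's uniqueness. For $h\in H\cap U$ and $y\in\fq$ with $t$ small, (L2) gives on $\cD$ the identity
\[ \pi_H(h)\,\pi(\exp ty) = \pi(\exp(t\Ad(h)y))\,\pi_H(h), \]
and since $\pi(\exp ty)\res_\cD$ coincides with the restriction of $e^{tB_y}$ by Fr\"ohlich, this promotes (after passing to closures) to $\pi_H(h) e^{tB_y} \pi_H(h)^{-1} = e^{tB_{\Ad(h)y}}$. Specializing $h = \exp(sx)$ with $x\in\fh$ and differentiating in $t$ at $0$ yields
\[ \pi_H(\exp sx)\, B_y\, \pi_H(\exp(-sx))\, v = B_{\Ad(\exp sx)y}\, v \quad \text{on } \cD, \]
from which $\pi_H(\exp sx)\cD \subeq \cD$ can be extracted (using sufficiently many $y\in\fq$ to see that the map $s\mapsto \pi_H(\exp sx) v$ is strongly differentiable into $\cH$), and a further differentiation in $s$ at $0$ gives $[B_x, B_y] v = B_{[x,y]} v$ together with $\cD\subeq \cD(B_x)$ and $B_x\cD\subeq \cD$. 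The remaining relation $[B_{y_1}, B_{y_2}] v = B_{[y_1,y_2]} v$ for $y_1, y_2 \in \fq$ (with $[y_1,y_2]\in\fh$) is obtained similarly by applying (L2) to a pair $(\exp s y_1, \exp t y_2) \in U\times U$ and comparing the $\fh$- and $\fq$-components of the generator via Baker--Campbell--Hausdorff and the Fr\"ohlich uniqueness. This makes $\alpha$ a Lie algebra homomorphism, with $\cD$ a common invariant dense domain consisting of essentially skew-adjoint directions for every $X \in \fh \cup i\fq$; Simon's Exponentiation Theorem then yields the desired unitary representation $\pi^c$ of $G^c$ with properties (i) and (ii).

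I expect the hardest step to be the bracket $[B_{y_1}, B_{y_2}] = B_{[y_1,y_2]}$: both $e^{tB_{y_1}}$ and $e^{tB_{y_2}}$ are only defined for $t$ in a neighborhood of $0$, and the product $\exp(sy_1)\exp(ty_2)$ lies in $U$ only for small parameters, so one cannot simply differentiate a global group identity. Extracting the infinitesimal relation must instead be done by controlling the joint $s,t$-differentiability at the origin of matrix coefficients $\la \pi(\exp sy_1)\pi(\exp ty_2) v, w\ra$ directly from (L2) and (L4), combined with the uniqueness of selfadjoint extensions in Fr\"ohlich's Theorem. It is presumably at this point that the new idea alluded to in the section introduction replaces the smoothness-type axioms (L5) and (L6) of \cite{Jo86}.
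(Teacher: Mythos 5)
There is a genuine gap, and it sits exactly where the paper's ``new idea'' intervenes. Your plan is to represent \emph{all} of $\fg^c=\fh+i\fq$ on $\cD$ and feed it to Simon's theorem, which forces you to prove $\cD\subeq\cD(B_x)$ and $B_x\cD\subeq\cD$ for \emph{every} $x\in\fh$. The axioms (L0)--(L4) give no control whatsoever over the interaction of $\pi_H$ with the prescribed subspace $\cD$: nothing says $\cD$ is $\pi_H$-invariant or that $s\mapsto\pi_H(\exp sx)v$ is differentiable for $v\in\cD$. Your attempt to extract this from the conjugation identity $\pi_H(\exp sx)\,B_y\,\pi_H(\exp(-sx))=B_{\Ad(\exp sx)y}$ does not work: that is an identity between operators on their natural domains $\cD(B_y)$, $\cD(B_{\Ad(\exp sx)y})$, and it can hold while $\cD$ (a fixed dense subspace strictly inside these domains) is moved off itself by $\pi_H(\exp sx)$ and while orbit maps of vectors in $\cD$ are nowhere differentiable. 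Moreover, even granting domain invariance and the brackets, Simon's Exponentiation Theorem needs a dense set of analytic vectors for a \emph{generating} set of $\fg^c$; (L4) together with Jorgensen's Lemma A only yields analyticity of $\cD$ in the $\fq$-directions, and $i\fq$ generates only the ideal $\fg_1^c:=[\fq,\fq]+i\fq$, not $\fg^c$, unless $\fh=[\fq,\fq]$. For $x\in\fh\setminus[\fq,\fq]$ there is no source of analyticity or of essential skew-adjointness of $B_x\res_\cD$.

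The paper's proof avoids both problems by splitting the task: in Step 1 it integrates only $\alpha\res_{\fg_1^c}$ on $\cD$ (here the needed facts $\cD\subeq\cD(B_x)$, $B_x\cD\subeq\cD$ for $x\in[\fq,\fq]$ and the bracket relations are Jorgensen's Observations 1--3, and Simon applies because $\fg_1^c$ \emph{is} generated by $i\fq$), obtaining a unitary representation $\pi_1^c$ of $G_1^c$; in Step 2 it never differentiates $\pi_H$ on $\cD$ at all, but instead glues the globally given $\pi_H$ to $\pi_1^c$ through the semidirect product $H\ltimes G_1^c$, verifying the single group-level commutation relation $\pi_H(h)\pi_1^c(\exp iy)=\pi_1^c(\exp i\Ad(h)y)\pi_H(h)$ by analytic continuation of the matrix coefficients $z\mapsto\la\pi(h)e^{zB_y}v,w\ra$ from $(0,s)$ to $z=i$, and finally passes to the quotient $G^c\cong(H\ltimes G_1^c)/\Delta$. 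Your derivation of $\pi_H(h)e^{tB_y}\pi_H(h)^{-1}=e^{tB_{\Ad(h)y}}$ from (L2) and Fr\"ohlich uniqueness is the right ingredient for that step, but the subsequent differentiations in $s$ (and likewise your BCH argument for $[B_{y_1},B_{y_2}]=B_{[y_1,y_2]}$ on $\cD$, which as stated is only a sketch) cannot be justified from the stated axioms; reorganizing the argument around the ideal $\fg_1^c$ and the semidirect product is what makes the proof go through without the extra axioms (L5), (L6).
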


\begin{proof}{\bf Step 1.} The first step of the proof follows \cite[Observation 1-3]{Jo86}. For
$x\in\fh_1:=[\fq,\fq]$ one shows that $\cD\subeq \cD(B_x)$ and $B_x\cD\subeq \cD$.
It follows that one can define a representation $\alpha$ of $\g_1^c:=\fh_1+ i\fq$ on
$\cD$ by skew-symmetric operators via the formula
$$\alpha(x+iy):=B_x|_\cD+i B_y|_\cD.$$
Assumption (L4) implies with \cite[Lemma A]{Jo87}  that $\cD$ consists of analytic vectors
for every $y\in\fq$. Since, by construction, the Lie algebra $\fg^c_1$ is generated
by $\fq$, Simon's Exponentiation Theorem \cite{Si72} applies
and $\alpha$ integrates to a unitary representation $(\pi^c_1,\cH)$ of the simply
connected group $G_1^c$ with Lie algebra $\fg_1^c$.

{\bf Step 2.} We consider the semidirect product $H\ltimes G_1^c$ where the
action of $H$ on $G_1^c$ is obtained by integrating the adjoint action. We claim that
the unitary representations $\pi_H$ and $\pi_1^c$ combine to a unitary representation
$\hat{\pi}$ of $H\ltimes G_1^c$. It suffices to show that for $h\in H$ and $y\in\fq$
the commutation relation
\begin{equation}\label{E:comrel3}
\pi_H(h)\pi_1^c(\exp iy)=\pi_1^c(\exp i\Ad(h)y)\pi_H(h)
\end{equation}
holds. Let $s>0$ such that $s y$ and 
$s \Ad(h)y$ are in $U_\fq$. Then, for $\Re(z)<s$
and $v,w\in\cD$, the maps $F(z)=\langle\pi(h)e^{zB_y}v,w\rangle$
and $G(z)=\langle e^{zB_{\Ad(h)y}}\pi(h)v,w\rangle$ are analytic. Since
$F(t)=G(t)$ for $t\in(0,s)$ by (L2), we have by analytic continuation
$F(i)=G(i)$ and \eqref{E:comrel3} follows. The kernel of the 
representation $\hat{\pi}$ contains the subgroup 
$\Delta := \{(h,h^{-1})\mid h\in h\}\subeq H\ltimes G_1^c$, so that it factors through 
a representation $\pi^c$ of $G^c \cong (H \ltimes G_1^c)/\Delta$. 
\end{proof}

\subsection{Local representations of infinite dimensional groups}

Since a (continuous) unitary representation of an infinite dimensional
Lie group does not necessarily possesses smooth vectors (\cite{Ne10}), we need to add
the following assumption in the definition of a local representation for
an infinite dimensional Lie group.
\begin{enumerate}
 \item[(LI1)] There exists a possibly finer topology on $\cD$ such that the action map
 $U\times \cD\rightarrow \cH,\  (g,v)\mapsto \pi(g)v$
 is smooth. 
\end{enumerate}
We then obtain an unbounded operator $\dd\pi(x):\cD \rightarrow \cH$ by the formula
$$\dd\pi(x)v:=\frac{\dd}{\dd t}\Bigr|_{t=0}\pi(\exp tx)v,\ v\in\cD$$
To be able to use the results of Section 3-4, we also add the following assumption:
\begin{enumerate}
 \item[(LI2)] $UH=U$.
\end{enumerate}

It turns out that with these new assumptions, condition $\mathrm{(L4)}$ is not needed
anymore.
\begin{thm}
Let $(\pi,\cD,\cH)$ be a local representation of the Banach--Lie group $G$ satisfying {\rm (L1-3)}
and 
$\mathrm{(LI1/2)}$. Let $\fg^c=\fh+ i\fq$ be 
the $c$-dual Lie algebra and $G^c$ be a simply connected
Lie group with Lie algebra $G^c$. Then there exists a unitary representation $(\pi^c,\cH)$ of $G^c$ such that
$$\dd\pi^c(x+iy)|_\cD=\dd\pi(x)+i\dd\pi(y)\quad\text{for}\quad x\in\fh,y\in\fq.$$
\end{thm}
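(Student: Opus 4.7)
The plan is to realize $\cH$ as a reproducing kernel Hilbert space on a natural locally convex manifold and then apply Theorem~\ref{thm:4.8} directly. For the geometric setup I take $M := U$ with the smooth right $H$-action $\sigma_h(g) := gh$, which is well-defined by condition $\mathrm{(LI2)}$, and the Lie algebra homomorphism $\beta \colon \fg \to \cV(M)$ given by the left-invariant vector fields $\beta(x)(g) := T_eL_g(x)$. Conditions (a)--(d) of Definition~\ref{def:3.1} are then satisfied: smoothness of $\hat\beta$ follows from the Banach--Lie group structure, $\dot\sigma = \beta$ on $\fh$ by construction, and each $\beta(x)$ with $x \in \fq$ is locally integrable on $U$ as the restriction of the global left-invariant flow on~$G$.

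For the kernel I first decompose $\cH$ into an orthogonal sum of $\pi_H$-invariant cyclic subspaces, each generated by a single smooth vector $v \in \cD$ (using $\mathrm{(LI1)}$ and density of $\cD$); it suffices to construct $\pi^c$ on each summand. For such a cyclic $v$ I set
\[ K(g_1,g_2) := \la \pi(g_1)v, \pi(g_2)v \ra, \qquad g_1, g_2 \in U. \]
This kernel is smooth by $\mathrm{(LI1)}$ and positive definite, and the standard reproducing kernel correspondence identifies $\cH_K$ with $\overline{\Spann\{\pi(g)v : g \in U\}}$ via $K(\cdot, g) \leftrightarrow \pi(g) v$. Axioms $\mathrm{(L0)}$ and $\mathrm{(L2)}$ provide the local identification $K(g_1, g_2) = \phi(g_2^\sharp g_1)$ with $\phi(g) := \la \pi(g)v, v\ra$ on the open subset of $U\times U$ where $g_2^\sharp g_1 \in U$; this identification is available near the diagonal and is what allows one to transport the calculation of Example~\ref{ex:3.15}(a) to the present setting.

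Once $\beta$-compatibility of $K$ is established, Theorem~\ref{thm:4.8} yields a unique smooth unitary representation $(\pi^c, \cH_K)$ of $G^c$ with $\overline{\dd\pi^c}(x) = \cL^K_{\beta(x)}$ for $x \in \fh$ and $\overline{\dd\pi^c}(iy) = i\cL^K_{\beta(y)}$ for $y \in \fq$. Transporting back through $\cH_K \cong \cH$, the Lie derivative $\cL^K_{\beta(x)}$ acting on $K(\cdot,g)$ corresponds by $\mathrm{(LI1)}$ and $\mathrm{(L2)}$ to $\dd\pi(x)$ applied to $\pi(g)v$; combining the $\fh$- and $i\fq$-contributions recovers the required formula $\dd\pi^c(x+iy)|_\cD = \dd\pi(x) + i\dd\pi(y)$.

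The main obstacle is the verification of $\beta$-compatibility of $K$ on all of $U \times U$. The local formula $K(g_1, g_2) = \phi(g_2^\sharp g_1)$ is only available where $g_2^\sharp g_1 \in U$; since the definition of a local representation does not endow $U$ with any semigroup structure, this subset need not exhaust $U \times U$. The axioms $\mathrm{(L0)}$--$\mathrm{(L2)}$ together with the smoothness $\mathrm{(LI1)}$ have to be exploited pointwise: at each $(g_1^0, g_2^0)$ where the product formula applies one performs the Example~\ref{ex:3.15}(a) calculation in a small neighborhood, and then patches the identities together using the local character of Lie derivatives and continuity, extending the compatibility relation beyond the subset where the naive formula is available. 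It is precisely this ability to dispense with a global semigroup hypothesis that makes Theorem~\ref{thm:4.8} the right tool here, and this is the step where the argument is substantively more involved than the finite-dimensional analogue of the previous subsection.
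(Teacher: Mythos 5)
There is a genuine gap, and it sits exactly where you place your ``main obstacle'': the verification of $\beta$-compatibility of $K(g_1,g_2)=\la\pi(g_1)v,\pi(g_2)v\ra$ on all of $U\times U$. The identity to be proved at a point $(g_1,g_2)$ is
$\derat0\la\pi(g_1)\pi(\exp tx)v,\pi(g_2)v\ra=-\derat0\la\pi(g_1)v,\pi(g_2)\pi(\exp t\tau(x))v\ra$,
and with your kernel the perturbation $\pi(\exp tx)$ sits \emph{inside}, next to $v$; to move it to the other side of the inner product you must first move the unbounded operator $\pi(g_1)$ across, which requires $\pi(g_2)v\in\cD(\pi(g_1)^*)$ and an identification $\pi(g_1)^*\pi(g_2)v=\pi(g_1^\sharp g_2)v$ --- precisely the information that is unavailable when $g_1^\sharp g_2\notin U$. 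Your proposed remedy, patching the identity from the open set where $g_2^\sharp g_1\in U$ by ``continuity and the local character of Lie derivatives,'' is not a valid argument: a pointwise identity between two smooth functions that holds on one open subset of $U\times U$ does not propagate to the rest of the (connected) domain; that would require real-analyticity, which is not among the hypotheses. So as written the compatibility is only established on a proper open subset, and Theorem~\ref{thm:4.8} cannot be invoked.

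The paper's proof removes the obstacle by a different choice of realization: it works on $M=U\times\cD$ (using the topology from (LI1) to make this a locally convex manifold, and thereby also avoiding your cyclic decomposition, which is itself delicate since a local representation has no globally invariant cyclic subspaces) with $\gamma(g,v)=\pi(g^\sharp)v$ and $K((g,v),(k,w))=\la\pi(k^\sharp)w,\pi(g^\sharp)v\ra$. The point of inserting $\sharp$ is that right translation in the manifold variable becomes \emph{left} multiplication by a near-identity element acting \emph{last} on the vector: $(g\exp tx)^\sharp=\exp(-t\tau(x))g^\sharp$, so by (L2)
$\pi((g\exp tx)^\sharp)v=\pi(\exp(-t\tau(x)))\pi(g^\sharp)v$, and now (L0) applied only to the element $\exp(-t\tau(x))$ (whose $\sharp$ is $\exp(tx)$) gives
$\la\pi(k^\sharp)w,\pi(\exp(-t\tau(x)))\pi(g^\sharp)v\ra=\la\pi(\exp tx)\pi(k^\sharp)w,\pi(g^\sharp)v\ra=\la\pi((k\exp(-t\tau(x)))^\sharp)w,\pi(g^\sharp)v\ra$
for all $(g,v),(k,w)\in U\times\cD$ and small $t$, with no condition relating $g$ and $k$. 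Differentiating yields $\cL^1_{\beta(x)}K=-\cL^2_{\beta(\tau(x))}K$ globally, and the same manipulation gives the final intertwining relation $\Gamma(\dd\pi(x)w)=\cL_{\beta(x)}\Gamma(w)$, which is equally problematic with your unsharped kernel. If you redo your construction with $\gamma(g)=\pi(g^\sharp)v$ in place of $\pi(g)v$ (and handle the cyclic decomposition with care, or pass to $U\times\cD$ as the paper does), the argument closes.
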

\begin{prf}
Let us define $\gamma(g,v):=\pi(g^\sharp)v$ where $g^\sharp:=\tau(g)^{-1}$.
Then 
$$K((g,v),(k,w))):=\la\gamma(k,w),\gamma(g,v)\ra$$ is
a smooth positive definite kernel on $U\times \cD$ and $\Gamma:\cH\rightarrow \cH_K\subeq C^\infty(U\times \cD,\C)$,
$$\Gamma(w):=\la w,\gamma(g,v)\ra$$
is a Hilbert space isomorphism. Let $(\beta,\sigma)$ be the smooth right action of $(\fg,H)$ on $U\times \cD$ defined by
$$\beta(x)(g,v):=(g.x,0),\quad \sigma(h)(g,v)=(gh,v),\quad x\in\g, h\in H, g\in U, v\in \cD.$$
For $h\in H$ we have $h^\sharp=h^{-1}$ and we thus get 
$$K((g,v)h,(k,w))=K((g,v),(k,w)h^{-1}).$$
If $x\in\fg$, then $\Phi^{\beta(x)}_t(g,v)=(g\exp tx,v)$ and
\begin{align*}
\cL_{\beta(x)}^1K((g,v),(k,w))&=\frac{\dd}{\dd t}\Bigr|_{t=0}\la \pi(k^\sharp)w,\pi((g\exp tx)^\sharp)v\ra\\
&=\frac{\dd}{\dd t}\Bigr|_{t=0}\la \pi((k\exp -t\tau(x))^\sharp)w,\pi(g^\sharp)v\ra\\
&=-\cL^2_{\beta{(\tau(x))}}K((g,v),(k,w)).
\end{align*}
Hence $K$ is $\beta$-compatible and by Theorem~\ref{thm:4.8}, $\cH_K$ carries on 
$$\cD_K:=\{\phi\in\cH_K\mid (\forall n\in\N_0, x_1, \ldots, x_n \in\g)\ 
\cL_{\beta(x_n)}\dots \cL_{\beta(x_1)}\phi\in\cH_K\}$$
a representation $\alpha$ of $\g_\C$ such that $\alpha|_{\g^c}$ integrates to a unitary representation of
$G^c$. To complete the proof, it remains to observe that $\cD\subeq\Gamma^{-1}(\cD_K)$ and
$\Gamma(\dd\pi(x)w)=\cL_{\beta(x)}\Gamma(w)$ for $x\in\g$. 
\end{prf}

\section{Generalization to distribution kernels} 
\mlabel{sec:4}

In this section we consider the case where $M$ is a finite dimensional manifold and
$K\in C^{-\infty}(M\times M)$ is a positive definite distribution compatible 
with a smooth right action $(\beta,\sigma)$ of $(\g,H)$. Our main Integrability theorem
is Theorem \ref{thm:4.12}.

\subsection{Geometric Fr\"ohlich Theorem for distributions}

\begin{defn}
Let $M$ be a finite dimensional smooth manifold and 
$K \in C^{-\infty}(M \times M)$ be a distribution, i.e., an antilinear 
continuous functional on the LF space $C^\infty_c(M) \to \C$. 
We say that $K$ is {\it positive definite} if 
\[ \la \phi, \psi \ra_K := K(\oline\phi \otimes \psi) \] 
defines a positive semidefinite hermitian form on $C^\infty_c(M)$. 
Then we write $\cH_K$ for the corresponding Hilbert space completion 
and $K_\phi$ for the image of $\phi \in C^\infty_c(M)$ in  $\cH_K$. 
Since the canonical map 
$\iota \: C^\infty_c(M) \to \cH_K, \phi \mapsto K_\phi$ 
is continuous linear with dense range,\begin{footnote}
  {To see that $\iota$ is continuous, it suffices to verify this claim 
on the subspace $\cD_C(M)$ for a compact subset $C \subeq M$. 
On this Fr\'echet space, the map 
$\cD_C(M) \to \cD_{C \times C}(M \times M), \phi \mapsto 
\phi \otimes \phi$ is easily seen to be continuous. 
Therefore the continuity of $K$ implies that 
$\phi  \mapsto \la \phi, \phi\ra_K$ is bounded on some $0$-neighborhood, 
and this means that $\iota\res_{\cD_C(M)}$ is continuous. }
\end{footnote}
we have an injective adjoint map 
\[ \iota' \: \cH_K \to C^{-\infty}(M), \quad 
\iota'(v)(\phi) := \la v, K_\phi\ra.\] 
In the following we accordingly identify $\cH_K$ with the  subspace 
$\iota'(\cH_K)$ of $C^{-\infty}(M)$. 
We then have the relation
\[ K_\varphi(\psi)=\la K_\phi, K_\psi \ra = \la \phi, \psi \ra_K 
= K(\oline \phi \otimes \psi) \quad \mbox{ for } \quad 
\phi, \psi \in C^\infty_c(M).\] 
\end{defn}

The continuity of the linear map $\iota$ implies that the kernel 
$K$ is a continuous bilinear map, hence a smooth kernel on the linear locally 
convex manifold $C^\infty_c(M)$. Therefore the results of 
Section~\ref{sec:2b} apply in particular to $K$ as a smooth kernel on $C^\infty_c(M)$. 

\begin{defn} The Lie derivative defines on  $C^\infty_c(M)$ the structure of a 
$\cV(M)$-module, and we consider on $C^{-\infty}(M)$ the adjoint representation: 
\[ (\cL_X D)(\phi) := - D(\cL_X \phi) \quad \mbox{ for } \quad X \in \cV(M), D \in C^{-\infty}(M), 
\phi \in C^\infty_c(M).\] 

For a distribution $D \in C^{-\infty}(M \times M)$
and $X \in \cV(M)$, we write 
\[ (\cL_X^1 D)(\phi \otimes \psi) := 
- D(\cL_X \phi \otimes \psi) \quad \mbox{ and } \quad 
(\cL_X^2 D)(\phi \otimes \psi) := 
- D( \phi \otimes \cL_X\psi),\] 
and we say that $X$ is {\it $D$-symmetric} if $\cL^1_XD=\cL^2_XD$ and
{\it $D$-skew-symmetric} if $\cL^1_XD=-\cL^2_XD$.
\end{defn}

\begin{rem}\label{R:sym} Let $K$ be a positive definite distribution on $M$.
If $X$ is $K$-symmetric (resp. $K$-skew-symmetric), then $\cL_X$ defines 
a symmetric (resp. skew-symmetric) operator on $C^\infty_c(M)$ 
with respect to $\la\cdot,\cdot\ra_K$.
\end{rem}

The next observation allows us to use Corollary~\ref{T:3.7} and to adapt the methods 
used in Section~\ref{sec:3}. 
\begin{rem} Let $X\in\cV(M)$ be locally integrable and
$\varphi\in C_c^\infty(M)$. If $\supp\varphi\subeq M_{-t}$,
then $\varphi\circ \Phi_t^X$ has compact support $\Phi_{-t}^X(\supp\varphi)\subeq M_{t}$
and therefore can be seen as an element of $C_c^\infty(M)$. 
\end{rem}

\begin{thm} {\rm(Geometric Fr\"ohlich Theorem for distributions)} 
\mlabel{T:Froelich-dist}
Let $M$ be a smooth manifold and $K \in C^{-\infty}(M \times M)$ be a positive definite 
distribution. If 
$X \in \cV(M)$ is a locally integrable $K$-symmetric  vector field on $M$, 
then the Lie derivative $\cL_X$ defines an essentially selfadjoint 
operator $\cH_K^0 \to \cH_K$ whose closure 
$\cL_X^K$ coincides with $\cL_X|_{\cD_X}$, where 
\[ \cD_X := \{D \in \cH_K \: \cL_XD \in \cH_K\}. \] 
Moreover, if the local flow $\Phi^X$ is defined on 
$[-\varepsilon,0] \times \supp(\phi)$ for some $\phi \in C^\infty_c(M)$, then 
\[ e^{t\cL_X^K} K_\phi = K_{\phi \circ \Phi^X_{-t}}\quad\text{for}\quad 0\leq t\leq \varepsilon\,.\] 
\end{thm}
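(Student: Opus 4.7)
The plan is to reduce the statement to Corollary~\ref{T:3.7} applied with the locally convex space $V := C^\infty_c(M)$, the continuous positive semidefinite hermitian form $\la\phi,\psi\ra_K := K(\bar\phi\otimes\psi)$, and the continuous linear operator $L := \cL_X$ on $V$.

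First I would match hypotheses. By Remark~\ref{R:sym}, $\cL_X$ is symmetric with respect to $\la\cdot,\cdot\ra_K$. Comparing the definition of the Lie derivative on distributions $(\cL_X D)(\phi) = -D(\cL_X\phi)$ with the formula $L^K\lambda := -\lambda\circ L$ from the paragraph preceding Corollary~\ref{T:3.7}, the closed operator $L^K$ on $\cH_K$ attached to $L = \cL_X$ is exactly $\cL_X|_{\cD_X}$, and \eqref{E:8} becomes $\cL_X K_\phi = K_{-\cL_X\phi}$ for $\phi\in V$.

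Second I would produce the required integral curves. Given $\phi \in C^\infty_c(M)$, compactness of $\supp(\phi)$ and openness of the domain $\cD_X$ of the maximal flow of $X$ yield some $\eps_\phi > 0$ with $[-\eps_\phi,\eps_\phi]\times\supp(\phi) \subeq \cD_X$. Then $\gamma_\phi(t) := \phi\circ\Phi^X_t$ lies in $C^\infty_c(M)$ for $t\in[0,\eps_\phi]$ and, using the flow invariance of $X$ in \eqref{eq:vecrel}, one checks that $\gamma_\phi'(t) = \cL_X\gamma_\phi(t)$. Corollary~\ref{T:3.7} then yields both the essential selfadjointness of $\cL_X|_{\cH_K^0}$ and the identification of its closure with $\cL_X^K = \cL_X|_{\cD_X}$.

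For the flow formula in the statement, I would apply the same corollary instead to the symmetric operator $-\cL_X$ with the backward curve $\tilde\gamma_\phi(t) := \phi\circ\Phi^X_{-t}$, which satisfies $\tilde\gamma_\phi'(t) = -\cL_X\tilde\gamma_\phi(t)$ and is defined precisely on the interval $[0,\eps]$ prescribed by the hypothesis $[-\eps,0]\times\supp(\phi)\subeq\cD_X$. The resulting conclusion $e^{t\cL_X^K}K_\phi = K_{\tilde\gamma_\phi(t)} = K_{\phi\circ\Phi^X_{-t}}$ on this interval completes the proof. There is no serious obstacle; the only care required is in tracking the sign conventions introduced by the dual definition of the Lie derivative on distributions, and in choosing the direction of the integral curves to match the conclusion.
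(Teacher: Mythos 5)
Your proposal is correct and follows essentially the same route as the paper: reduce to Corollary~\ref{T:3.7} with $V=C^\infty_c(M)$ and $L=\cL_X$, identify the induced operator $L^K$ with the distributional Lie derivative on $\cD_X$, and supply the integral curves $\phi\circ\Phi^X_{\pm t}$ (which stay in $C^\infty_c(M)$ by compactness of $\supp(\phi)$ and openness of the flow domain). The only cosmetic difference is that you invoke the corollary twice (forward curve for essential selfadjointness, backward curve for the semigroup formula), whereas the paper obtains both conclusions from the single backward curve $\gamma(t)=\varphi\circ\Phi^X_{-t}$; your sign bookkeeping is consistent and arguably cleaner than the paper's.
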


\begin{prf} For every $\varphi \in C^\infty_c(M)$, 
there exists an $\eps > 0$ such that the flow $\Phi^X$ of $X$ is defined on 
the compact subset $[-\eps, 0] \times \supp(\varphi)$ of 
$\R \times M$. Then the curve 
\[ \gamma \: [0,\eps] \to C^\infty_c(M), \quad \gamma(t) 
:= \varphi \circ \Phi_{-t}^X \] 
satisfies $\gamma'(t)=-\cL_X\varphi$ in the natural topological on~$C^\infty_c(M)$. 
Therefore the assumptions of Corollary~\ref{T:3.7} are satisfied with $V=C_c^\infty(M)$ and $L=\cL_X$. 
We conclude that $\cL_X|_{\cH_K^0}$ is essentially self-adjoint with closure 
equal to $\cL_X^K$ and that $e^{t\cL_X^K} K_\phi = K_{\phi \circ \Phi^X_{-t}}$ for 
$0\leq t\leq \eps$. 
\end{prf}

\subsection{The Integrability Theorem for Distributions}

\begin{defn} 
Let $\g = \fh + \fq$ be a symmetric Lie algebra with involution 
$\tau$ and $\beta \: \g \to \cV(M)$ be a homomorphism of Lie algebras. 
A positive definite distribution 
$K \in C^{-\infty}(M \times M,\C)$ is said to be 
{\it $\beta$-compatible} if 
\[ \cL^1_{\beta(x)} K = -\cL^2_{\beta(\tau(x))} K \quad \mbox{ for } \quad 
x \in \g.\] 
\end{defn}

In the following we assume that $K$ is a positive definite 
distribution on $M$ compatible with the smooth right 
action $(\beta,\sigma)$ of $(\g,H)$ (cf.\ Definition~\ref{def:3.1}).
For $z\in\g_\C$, we put 
\[ \cL_{\beta(z)}:=\cL_{\beta(x)}+i\cL_{\beta(y)} \] 
and we write $\cL_z$ for the restriction of $\cL_{\beta(z)}$ to
its maximal domain 
$$\cD_z=\{D\in\cH_K\mid \cL_{\beta(z)}D\in\cH_K\}.$$
As in Section~\ref{sec:2b}, we define $\cD_1:=\bigcap_{x\in\fg}\cD_x$ and
\[ \cD := \{ D \in \cH_K \: (\forall n \in \N)
(\forall x_1,\ldots, x_n \in \g)\, \cL_{\beta(x_1)}\cdots 
\cL_{\beta(x_n)}D \in \cH_K\} \]  
so that we obtain a Lie algebra representation 
$\alpha \: \g_\C \to \End(\cD)$ 
such that, for $x \in \g^c := \fh + i \fq$, the operator 
$\alpha(x)$ is skew-hermitian. 
From $\eqref{E:8}$ and Remark~\ref{R:sym} we deduce that 
\begin{equation}\label{E:infaction}
\cL_x K_\varphi=K_{\cL_{\tau(x)}\varphi},
\end{equation}
hence
$\cH_K^0 \subeq \cD$. In particular, $\cD$ is dense in $\cH_K$.

\begin{lem}
  \mlabel{lem:4.5} 
The prescription 
$\pi^H(D)(\phi) := D(\phi \circ \sigma_h^{-1})$ 
defines a smooth unitary representation $(\pi^H, \cH_K)$ of $H$.
For $h \in H$ and $\phi \in C^\infty_c(M) $, we have
\begin{equation}
  \label{eq:disinv2}
 \pi^H(h)K_\phi := K_{\phi \circ \sigma_h} 
\end{equation}
and $\cH_K^0$ consists of smooth vectors. 
For $x \in \fh$, the infinitesimal generator 
$\oline{\dd\pi^H}(x)$ of the unitary 
one-parameter group $\pi^H_x(t) := \pi^H(\exp tx)$
coincides with $\cL_x$. Moreover $\cH_K^0$ is a
core for $\cL_x$.
\end{lem}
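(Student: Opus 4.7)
The plan is to mirror the smooth-kernel proof of Lemma~\ref{lem:3.1}: first establish that the distribution $K$ is $H$-invariant, then use this to extend the pull-back action from $\cH_K^0$ to a unitary representation on $\cH_K$, and finally identify the infinitesimal generators with the operators $\cL_x$.

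The central step will be the $H$-invariance. By connectedness of $H$, it suffices to fix $x \in \fh$ and $\phi, \psi \in C^\infty_c(M)$, and to show that
\[ f(t) := K\bigl((\phi \circ \sigma_{\exp tx}) \otimes (\psi \circ \sigma_{\exp tx})\bigr) \]
is constant. Since $\sigma_{\exp tx}$ is the flow of $\beta(x)$ by Definition~\ref{def:3.1}(c), the curve $t \mapsto \phi \circ \sigma_{\exp tx}$ stays, locally in $t$, in the Fr\'echet subspace of $C^\infty_c(M)$ consisting of test functions supported in a fixed compact set, and is smooth there. Differentiating under the continuous bilinear pairing $K$ should yield
\[ f'(t) = -\bigl(\cL_{\beta(x)}^1 K + \cL_{\beta(x)}^2 K\bigr)(\tilde\phi_t \otimes \tilde\psi_t), \]
where $\tilde\phi_t = \phi \circ \sigma_{\exp tx}$ and similarly for $\tilde\psi_t$. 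The relation $\tau(x) = x$ together with $\beta$-compatibility then forces this to vanish. Justifying this differentiation carefully, in the LF-topology on $C^\infty_c(M)$, will be the main technical obstacle.

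With invariance in hand, $\pi^H(h) K_\phi := K_{\phi \circ \sigma_h}$ is an isometry on $\cH_K^0$, so I extend by density to a unitary operator on $\cH_K$; the right-action property of $\sigma$ makes $\pi^H$ a group homomorphism. Strong continuity and smoothness of the orbit maps at vectors $K_\phi \in \cH_K^0$ follow by composing the continuity, respectively smoothness, of $h \mapsto \phi \circ \sigma_h$ in the above Fr\'echet space with the continuity of $\iota \colon C^\infty_c(M) \to \cH_K$; strong continuity then extends to all of $\cH_K$ by density and the uniform bound $\|\pi^H(h)\| = 1$.

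Finally, for the generator identification: for $x \in \fh$ and $\phi \in C^\infty_c(M)$ one computes
\[ \derat0 \pi^H(\exp tx) K_\phi = K_{\cL_{\beta(x)}\phi} = \cL_x K_\phi, \]
using \eqref{E:infaction} and $\tau(x) = x$. The Stone generator $T$ of $\pi^H_x$ therefore extends $\cL_x|_{\cH_K^0}$; since $\cH_K^0$ is $\pi^H_x$-invariant, it is a core for $T$, which already gives essential skew-adjointness of $\cL_x|_{\cH_K^0}$. Closedness of $\cL_x$ on its maximal domain $\cD_x$ follows as in Proposition~\ref{prop:graphclosed}(iii), with $K$ viewed as a smooth kernel on the locally convex manifold $C^\infty_c(M)$; a pairing argument against elements of $\cH_K^0$, using \eqref{E:infaction} together with the skew-adjointness of $T$, will then upgrade the inclusion $T \subeq \cL_x$ to the equality $\cL_x = T$.
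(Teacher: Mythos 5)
Your proposal is correct and follows essentially the same route as the paper: the paper likewise rests on the $H$-invariance of $\la\cdot,\cdot\ra_K$ coming from the compatibility condition, obtains unitarity and the smooth-vector statement from the continuity of $\iota$ combined with the smoothness of the $H$-action on $C^\infty_c(M)$, and identifies the generator via Stone's Theorem plus the closedness/adjoint argument of Proposition~\ref{prop:graphclosed}(iv) (all packaged there as a reduction to Lemma~\ref{lem:3.2}). The differentiation you flag as the main technical obstacle is unproblematic: the curve stays in a fixed Fr\'echet space $\cD_C(M)$, on which $K$ restricts to a continuous, hence smooth, sesquilinear form, as the paper already notes right after the definition of a positive definite distribution.
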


\begin{prf} The $H$-invariance of $K$ implies that 
the hermitian form $\la \cdot,\cdot\ra_K$ on 
$C^\infty_c(M)$ is invariant under the action of $H$ on $C^\infty_c(M)$. 
This implies that the subspace $\cH_K \subeq C^{-\infty}(M)$ 
is invariant under the natural $H$-action on $C^{-\infty}(M)$ given by 
$(h.D)(\phi) = D(\phi \circ \sigma_h^{-1})$ and that it restricts 
to a unitary representation 
(cf.\ \cite[Prop.~II.4.9]{Ne00}). 
From \eqref{E:infaction} and the connectedness of $H$, we derive that 
\[ h.K_\phi = K_{\phi \circ \sigma_h} \quad \mbox{ for } \quad h \in H.\] 

The continuity of $\iota$ and the smoothness of the 
action of $H$ on $C^\infty_c(M)$ further imply that $\cH_K^0$ consists of smooth 
vectors. The second part of the lemma follows from Lemma~\ref{lem:3.2}.  
\end{prf}

\begin{lem}
  \mlabel{lem:3.6b}
For each $D \in \cD^1$, the complex linear map 
\[ \omega_D \: \g_\C \to \cH_K, \quad x \mapsto \cL_x D \] 
is continuous. 
\end{lem}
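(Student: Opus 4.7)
The plan is to mimic the proof of Lemma~\ref{lem:3.6} by using the closed graph theorem: since both $\g_\C$ and $\cH_K$ are Banach spaces, it suffices to check that the graph of $\omega_D\res_\g$ is closed, as complex linearity then extends the continuity to $\g_\C$.

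First I would test the map against the dense set of vectors $\{K_\phi : \phi \in C_c^\infty(M)\}$. For fixed $\phi \in C_c^\infty(M)$ and $D \in \cD^1$, using the adjoint description of the Lie derivative on distributions together with the identification $\cH_K \subeq C^{-\infty}(M)$, one has
\[
\la \omega_D(x), K_\phi\ra = (\cL_x D)(\phi) = -D\bigl(\cL_{\beta(x)}\phi\bigr) \quad \text{for } x\in\g.
\]
The smoothness of $\hat\beta \: \g \times M \to TM$ guarantees that for each $\phi\in C_c^\infty(M)$ with compact support $C$, the map $x \mapsto \cL_{\beta(x)}\phi$ is continuous from $\g$ into the Fr\'echet subspace $\cD_C(M)\subeq C_c^\infty(M)$. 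Composing with the continuous distribution $D$ then shows that $x \mapsto \la \omega_D(x), K_\phi\ra$ is continuous on $\g$.

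Now I would run the closed graph argument. Suppose $x_n \to x$ in $\g$ and $\omega_D(x_n) \to v$ in $\cH_K$. For every $\phi\in C_c^\infty(M)$, the previous step gives
\[
\la v, K_\phi \ra = \lim_{n\to\infty} \la \omega_D(x_n), K_\phi\ra = -D\bigl(\cL_{\beta(x)}\phi\bigr) = \la \cL_x D, K_\phi\ra.
\]
Since the vectors $K_\phi$ span a dense subspace of $\cH_K$, this forces $v = \cL_x D = \omega_D(x)$, so the graph of $\omega_D\res_\g$ is closed. The closed graph theorem (applied to Banach spaces) then yields continuity of $\omega_D\res_\g$, and extending by complex linearity to $\g_\C$ finishes the proof.

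The only delicate point is the continuity of $x\mapsto \cL_{\beta(x)}\phi$ as a map $\g\to C_c^\infty(M)$; this is where the hypothesis that $\hat\beta$ is smooth (not just that $\beta$ is a Lie algebra homomorphism) is used in an essential way, exactly as in Lemma~\ref{lem:3.6}. Everything else is a routine transcription of that earlier proof to the distributional setting.
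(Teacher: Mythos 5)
Your proof is correct and follows essentially the same route as the paper: both reduce the statement to the closed graph theorem for the Banach spaces $\g_\C$ and $\cH_K$, and verify closedness of the graph by pairing against the total set $\{K_\phi : \phi \in C^\infty_c(M)\}$ via $\la \cL_x D, K_\phi\ra = -D(\cL_{\beta(x)}\phi)$. The only difference is that you spell out why $x \mapsto -D(\cL_{\beta(x)}\phi)$ is continuous (smoothness of $\hat\beta$ giving continuity of $x\mapsto\cL_{\beta(x)}\phi$ into $\cD_{\supp\phi}(M)$), a detail the paper leaves implicit.
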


\begin{prf}
Since $\g_\C$ and $\cH$ are Banach spaces, 
it suffices to show that the graph of $\omega_D$ is closed. 
This follows from the continuity of the linear functionals 
\[ \g_\C \to \C, \quad x \mapsto (\cL_x D)(\phi) 
= - D(\cL_x \phi), \quad \phi \in C^\infty_c(M).\qedhere\]
\end{prf}

To show that the representation $\alpha \: \g^c \to \End(\cD)$ integrates to a 
continuous unitary representation of $G^c$ we will again use
Theorem~\ref{T:intcrit}. 

\begin{thm} \mlabel{thm:4.12} 
Let $K \in C^{-\infty}(M \times M)$ be a positive definite distribution 
compatible with the smooth right action 
$(\beta,\sigma)$ of the pair $(\g,H)$ on $M$, where 
$\g = \fh \oplus \fq$ is a symmetric 
Banach--Lie algebra and $H$ is a connected Lie group with Lie algebra $\fh$. 
Let $G^c$ be a simply connected Lie group with Lie algebra 
$\g^c = \fh + i\fq$. 
Then there exists a unique smooth unitary representation $(\pi^c,\cH_K)$ of $G^c$ 
such that 
\begin{description}
\item[\rm(i)] $\oline{\dd\pi^c}(x) = \cL_x$ for $x \in \fh$. 
\item[\rm(ii)] $\oline{\dd\pi^c}(iy) = i\cL_y$ for $y \in \fq$. 
\end{description}
\end{thm}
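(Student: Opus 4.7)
The proof will closely mirror that of Theorem~\ref{thm:4.8}, with $K_m$ replaced by $K_\phi$ for $\phi \in C^\infty_c(M)$ and the Geometric Fr\"ohlich Theorem for distributions (Theorem~\ref{T:Froelich-dist}) replacing its smooth counterpart. The strategy is to apply Merigon's integration criterion (Theorem~\ref{T:intcrit}) to the Lie algebra representation $\alpha \: \g_\C \to \End(\cD)$ with the splitting $\g^c = \fh \oplus i\fq$. Strong continuity of $\alpha$ on $\cD$ comes from Lemma~\ref{lem:3.6b}, and density of $\cD$ has already been observed via $\cH_K^0 \subeq \cD$.

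Next, I would verify essential skew-adjointness of $\alpha(x)$ for $x \in \fh \cup i\fq$. For $x \in \fh$, Lemma~\ref{lem:4.5} identifies $\cL_x$ with the infinitesimal generator $\oline{\dd\pi^H}(x)$ of a unitary one-parameter group, so $\alpha(x)\res_\cD$ is essentially skew-adjoint with closure $\cL_x$. For $x \in \fq$, Theorem~\ref{T:Froelich-dist} gives $\cL_x^K$ selfadjoint with $\cH_K^0$ as a core and
\[ e^{t\cL_x^K} K_\phi = K_{\phi \circ \Phi^{\beta(x)}_{-t}} \]
whenever the right-hand side makes sense. The invariance $e^{t\cL_x^K} \cD \subeq \cD$ (and similarly for $e^{i\cL_x^K}$) will be verified by a chain-rule argument analogous to Step~2 of the proof of Theorem~\ref{thm:4.8}, using that $\phi \circ \Phi^{\beta(x)}_{-t}$ is again in $C^\infty_c(M)$ for small $t$ and that Lie derivatives commute with such pullbacks up to an $\Ad$-twist.

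The heart of the argument is establishing the commutation relations required by Theorem~\ref{T:intcrit}. For $x \in \fh$ these follow from Lemma~\ref{lem:4.5} together with Remark~\ref{rem:3.2}, exactly as in Step~3 of the proof of Theorem~\ref{thm:4.8}. For $x \in \fq$, the identity
\[ e^{-i\cL_x^K}\, \cL_y\, e^{i\cL_x^K} = \cL_{e^{-i\ad x}y} \quad \text{on } \cD^1 \]
is obtained by analytic continuation. Starting from real $t$, one computes for $\phi, \psi \in C^\infty_c(M)$ and $y \in \g_\C$
\[ \la e^{t\cL_x^K} K_\phi, \cL_{\tau(y)} K_\psi \ra = -\la \cL_{e^{-t\ad x}y} K_\phi, e^{t\cL_x^K} K_\psi \ra, \]
using the $\beta$-compatibility of $K$ and the transformation formula \eqref{eq:trarel} applied to $\phi \circ \Phi^{\beta(x)}_{-t}$. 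Both sides extend holomorphically in $t$ to a complex strip, by (i)~the vectors $e^{z\cL_x^K} K_\phi$ extending holomorphically in $z$ via the spectral calculus of the selfadjoint operator $\cL_x^K$, and (ii)~the entireness of $z \mapsto e^{z\ad x}y$ in the Banach--Lie setting. Specializing to $z = i$ and then using continuity via Lemma~\ref{lem:3.6b} yields the desired commutation relation.

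I expect the main obstacle to be the careful tracking of domains during the analytic continuation, in particular upgrading $e^{i\cL_x^K}\cH_K^0 \subeq \cD^1$ to $e^{i\cL_x^K}\cD \subeq \cD$, which (as in Step~2 of Theorem~\ref{thm:4.8}) requires iterating the first-order invariance by means of the commutation formula itself. Once all the hypotheses of Theorem~\ref{T:intcrit} are checked, $\alpha\res_{\g^c}$ integrates to a smooth unitary representation $\pi^c$ of $G^c$ with $\dd\pi^c(x)\res_\cD = \alpha(x)\res_\cD$; the closure statements (i) and (ii) follow from the essential skew-adjointness of the generators already established, and uniqueness is immediate from this since the $\oline{\dd\pi^c}(x)$ for $x \in \fh \cup i\fq$ together determine $\pi^c$ via Stone's Theorem.
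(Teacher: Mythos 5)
Your proposal is correct and follows essentially the same route as the paper: the paper's proof of Theorem~\ref{thm:4.12} adapts Step~1 of Theorem~\ref{thm:4.8} by replacing $K_m$ with $K_\phi$ for $\phi\in C^\infty_c(M)$ with $\supp(\phi)\subeq M_s$, invokes Theorem~\ref{T:Froelich-dist} for selfadjointness and the formula $e^{t\cL_x}K_\phi=K_{\phi\circ\Phi^{\beta(x)}_{-t}}$, performs the same analytic continuation via \eqref{eq:trarel} and \cite[Lemma~2]{KL81} together with Lemma~\ref{lem:3.6b}, and then refers back verbatim to Steps~2 and~3 of Theorem~\ref{thm:4.8} before applying Theorem~\ref{T:intcrit}. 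The domain-upgrading issue you flag ($e^{i\cL_x}\cH_K^0\subeq\cD^1$ to $e^{i\cL_x}\cD\subeq\cD$) is handled in exactly the way you anticipate, namely by the duality and iteration argument of Step~2.
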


\begin{prf} The proof is very similar to the one
of Theorem~\ref{thm:4.8}. The first step has to be slightly adapted.
For $x \in \fq$, we consider the associated selfadjoint 
operator $\cL_x$ on $\cH_K$ (Theorem~\ref{T:Froelichgeo}). 
Then we obtain by spectral calculus a hermitian 
one-parameter group $e^{t\cL_x}$ of unbounded selfadjoint operators 
on $\cH_K$ and we put $\cD_t= \cD(e^{t\cL_x})$. 
For $\phi \in C^\infty_c(M)$ with $\supp(\phi) \subeq M_{s}$ we know 
from Theorem~\ref{T:Froelich-dist} that $K_\phi \in \cD_s$ with 
\[ \gamma(t) = K_{\phi \circ \Phi^{\beta(x)}_{-t}} = e^{t\cL_x}K_\phi\quad\text{for}\quad 0\leq t\leq s.\] 
The curve 
$t \mapsto e^{t\cL_x} K_\phi$ in $\cH_K$ extends to a holomorphic 
function $e^{z\cL_x} K_\phi$ defined on an open neighborhood of the strip 
$0 \leq \Re z \leq s$. Accordingly, for $y \in \g$ and 
$\psi \in C^\infty_c(M)$, the function 
\[ t \mapsto \la e^{t\cL_x} K_\phi, \cL_y K_\psi \ra \] 
extends to a holomorphic function 
\[ z \mapsto \la e^{z\cL_x} K_\phi, \cL_y K_\psi \ra. \] 
For $x \in \fq$, $y \in \g$ and $\phi, \psi \in C^\infty_c(M)$ with 
$\supp(\phi), \supp(\psi) \subeq M_{s}$, we thus get:  
\begin{align*}
\la e^{t\cL_x}K_\phi, \cL_{\tau(y)} K_\psi \ra 
&=  \la K_{\phi \circ \Phi^{\beta(x)}_{-t}}, \cL_{{\tau(y)}} K_\psi \ra 
=  \oline{ (\cL_{\tau(y)} K_\psi)(\phi \circ \Phi^{\beta(x)}_{-t})} \\
&=  \oline{ (\cL_{e^{t\ad x}\tau(y)}K_{\psi \circ \Phi^{\beta(x)}_{-t}})}(\phi)
\qquad\qquad \mbox{ by  \eqref{eq:trarel}} \\
&
= \la  K_\phi, \cL_{e^{t\ad x}{\tau(y)}} e^{t\cL_x}K_\psi \ra \\
&= -\la \cL_{e^{-t\ad x}y} K_\phi, e^{t\cL_x}K_\psi \ra 
\end{align*}
By analytic extension (cf.\ \cite[Lemma~2]{KL81}), 
we now arrive with Lemma~\ref{lem:3.6b} at the relation 
\begin{align*}
\la e^{z\cL_x}K_\phi , \cL_{\tau(y)}  K_\psi \ra 
&= -\la \cL_{e^{-z\ad x}y} K_\phi, e^{\oline z\cL_x}K_\psi \ra 
\quad \mbox{ for } \quad 0 \le \Re z \leq t 
\end{align*}
and we get in particular 
\begin{align*}
\la e^{i\cL_x}K_\phi, \cL_{\tau(y)}  K_\psi \ra 
&= -\la \cL_{e^{-i\ad x}y}K_\phi, e^{-i\cL_x}K_\psi \ra.  
\end{align*}
This relation leads to 
$e^{i\cL_x} K_\phi \in \cD_y$ with 
\[ \cL_y e^{i\cL_x} K_\phi = e^{i\cL_x} \cL_{e^{-i\ad x}y} K_\phi 
\quad \mbox{ for } \quad \phi \in C^\infty_c(M), y \in \g.\] 
Here we use that $M = \bigcup_{s > 0} M_s$ implies that 
$\supp(\phi) \subeq M_s$ for some $s > 0$. 
We thus obtain $e^{i\cL_x} \cH_K^0 \subeq \cD^1$  with 
\[ \cL_y e^{i\cL_x} \res_{\cH_K^0} = e^{i\cL_x} \cL_{e^{-i\ad x}y} 
\res_{\cH_K^0},\] 
resp., 
\[e^{-i\cL_x} \cL_y e^{i\cL_x} \res_{\cH_K^0} = \cL_{e^{-i\ad x}y} 
\res_{\cH_K^0}\quad \mbox{ for } \quad x \in \fq, y \in \g^c.\] 
Now the rest of the proof is exactly as in Steps 2 and 3 of the proof of Theorem~\ref{thm:4.8}.
\end{prf}

\begin{rem} Let $(\beta,\sigma)$ be a smooth right action of 
$(\g,H)$ on $M$. Then 
$V := C^\infty_c(M)$ is a locally convex manifold on which the Lie derivatives 
$\cL_{\beta(x)}$ defines a representation 
$\g \to \cV(V)$. Moreover, every positive definite distribution $K$ on 
$M \times M$ defines a smooth positive definite kernel 
$\tilde K(\phi, \psi) := K(\oline\phi \otimes \psi)$ on $V$. 
One is therefore tempted to derive the results in Section~\ref{sec:4} 
from Section~\ref{sec:3}. However, this does not work directly because 
the linear vector fields on $V$ defined by the Lie derivatives are not locally 
integrable if the corresponding flow on $M$ is not global 
(cf.\ Proposition~\ref{prop:lin}). 
\end{rem}

\subsection{Reflection positive distributions and representations}

In this subsection we connect the previously obtained integrability result 
to reflection positivity (cf.\ \cite{NO13, NO14, JOl98, JOl00}). 
Let 
$D \in C^\infty(M \times M,\C)$ be a 
reflection positive distribution kernel which is reflection positive w.r.t.\ 
the involution $\theta \: M\to M$ on the open subset $M_+ \subeq M$ 
(cf.\ Definition~\ref{ex:1.4}). 
Our main result is Theorem~\ref{thm:4.12b} which shows that, 
under the natural compatibility requirements for an action of a 
symmetric Lie group $(G,H,\tau)$ on $(M,\theta)$, the representation 
of the pair $(\g^c,H)$ on the Hilbert space $\cH_{D_+}$ corresponding to the 
positive definite distribution $D(\cdot, \theta \cdot)$ on $M_+$ integrates to a 
unitary representation of the corresponding simply connected group $G^c$ with 
Lie algebra~$\g^c$.
 
\begin{defn} \mlabel{def:x.1} Let $\cE$ be a Hilbert space and 
$\theta \in \U(\cE)$ be an involution. 
We call a closed subspace $\cE_+ \subeq \cE$ {\it $\theta$-positive} 
if $\la \theta v,v \ra \geq 0$ for $v \in \cE_+$. 
We then say that  the triple $(\cE,\cE_+,\theta)$ is a {\it reflection positive 
Hilbert space}. In this case we write 
\[\cN 
:= \{ v \in \cE_+ \: \la \theta v, v \ra = 0\} 
= \{ v \in \cE_+ \: (\forall w \in \cE_+)\ \la \theta w, v \ra = 0\} 
= \cE_+ \cap \theta(\cE_+)^\bot, \] 
$q \: \cE_+ \to \cE_+/\cN, v \mapsto \hat v = q(v)$ for the quotient map 
and $\hat\cE$ for the Hilbert completion of $\cE_+/\cN$ with respect to 
the norm $\|\hat v\|_{\hat{\cE}}:=\|\hat v\| := \sqrt{\la \theta v, v \ra}$. 
\end{defn}

Let $(G,H,\tau)$ be a symmetric Lie group and $\g = \fh \oplus \fq$
be the corresponding symmetric Lie algebra. 
\begin{defn} Let  $(G,H,\tau)$ be a symmetric Lie group and 
$(\cE,\cE_+,\theta)$ be a reflection positive Hilbert space. 
A  unitary representation $\pi \: G \to \cE$ is said to be {\it reflection 
positive} on $(\cE,\cE_+,\theta)$ if the following 
three conditions hold:
\begin{description}
\item[\rm(RP1)] $\pi(\tau(g)) = \theta \pi(g)\theta$ for every $g \in G$. 
\item[\rm(RP2)] $\pi (h)\cE_+= \cE_+$ for every $h \in H$. 
\item[\rm(RP3)] There exists a subspace $\cD\subeq \cE_+\cap \cE^\infty$, dense in $\cE_+$, 
such that  $\dd \pi (X)\cD\subset \cD$ for all $X\in \fq$. 
\end{description}
\end{defn}

%\begin{rem} (RP1) is equivalent to the statement that $\pi$ extends to a unitary representation of
%$G_\tau := G\rtimes \{\1, \tau\}$ by $\pi (\tau )=\tau$. Similarly, if $\pi$ is a representation
%of $G_\tau$ then $\tau := \pi (\tau )$ satiesfies (RP1). 
%\end{rem}

\noindent If $\pi$ is a reflection positive representation on $(\cE,\cE_+,\theta)$, 
then it follows from \cite[Lemma~2.4]{NO13} that
by defining $\hat\pi_0(h)=\hat{\pi (h)}$ we get a  unitary representation 
$(\hat\pi_0, \hat\cE)$ of $H$. However, we would like to have a 
unitary representation $\pi^c$ of $G^c$ on $\hat\cE$ 
``extending'' $\hat\pi_0$. In the following we give a geometric construction
of reflection positive representations and we show,
using the previous results, that they can be analytically 
continued to unitary representations of $G^c$.

\begin{defn} \mlabel{ex:1.4} Let $M$ be a smooth finite dimensional manifold and 
$D \in C^{-\infty}(M \times M)$ be a positive definite distribution. 
Suppose further that $\theta \: M \to M$ is an involutive diffeomorphism and 
that $M_+ \subeq M$ is an open subset such that the distribution $D_+$ 
on $M_+ \times M_+$ defined by 
\[ D_+(\phi) := D(\phi \circ (\theta \times \id_M)) \] 
is positive definite. We then say that $D$ is {\it reflection positive} with respect to 
$(M,M_+, \theta)$. 
\end{defn}

Let $D \in C^{-\infty}(M \times M)$ be a reflection positive distribution with respect to 
$(M,M_+, \theta)$ as in Example~\ref{ex:1.4} and let
$\cE\subeq C^{-\infty}(M)$ be the corresponding reflection positive 
Hilbert space obtained by completing $C^\infty_c(M)$ with respect to the scalar product 
\[ \la \phi, \psi \ra_D := D(\oline \phi \otimes \psi).\] 
Then the closed subspace $\cE_+$ generated by $C^\infty_c(M_+)$ is 
$\theta$-positive with respect to $\theta \phi := \phi \circ \theta$ 
and $\hat\cE \cong \cH_{D_+} \subeq \di(M_+)$ 
(cf.\ \cite{NO13}).

Let $(G,H,\tau )$ be a symmetric Lie group acting on $M$ such that
$\theta (g.m)=\tau(g).\theta (m)$ and $H.M_+=M_+$. We 
assume   that $D$ is invariant under $G$ and $\tau$. 
Then we have a natural unitary representation $(\pi_\cE, \cE)$ of $G$. 
As $M_+$, and therefore $\cE_+$, is $H$-invariant, this representation is reflection 
positive.  

From the invariance condition 
\begin{equation}
  \label{eq:invcon2b}
\cL_{\beta(x)}^1 D = - \cL_{\beta(x)}^2 D \quad \mbox{ for} \quad 
x \in \g, 
\end{equation}
we derive 
\begin{eqnarray} 
  \label{eq:invcon3}
\cL_{\beta(x)}^1 D_+ = - \cL_{\beta(\tau(x))}^2 D_+ \quad \mbox{ for} \quad 
x \in \g.\end{eqnarray} 

This implies that the assumptions of Theorem~\ref{thm:4.12} 
are satisfied, so that we obtain: 

\begin{theorem}\mlabel{thm:4.12b} Let $M$ be a smooth finite dimensional manifold and 
$D \in C^{-\infty}(M \times M)$ be a positive definite distribution which is 
reflection positive w.r.t.\ $(M,M_+, \theta)$. 
Let $(G,H,\tau )$ be a symmetric Lie group acting on $M$ such that
$\theta (g.m)=\tau(g).\theta (m)$ and $H.M_+=M_+$. We 
assume   that $D$ is invariant under $G$ and $\tau$. 
Let $G^c$ be a simply connected Lie group with Lie algebra 
$\g^c = \fh + i\fq$ and define 
$\cL_x$, $x \in \g$, on its maximal domain in the Hilbert subspace 
$\cH_{D_+} \subeq C^{-\infty}(M_+)$. 
Then there exists a unique smooth unitary representation $(\pi^c,\cH_{D_+})$ 
of $G^c$ 
such that 
\begin{description}
\item[\rm(i)] $\oline{\dd\pi^c}(x) = \cL_x$ for $x \in \fh$. 
\item[\rm(ii)] $\oline{\dd\pi^c}(iy) = i\cL_y$ for $y \in \fq$. 
\end{description}
\end{theorem}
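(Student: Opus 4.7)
The plan is to derive this theorem as a direct application of Theorem~\ref{thm:4.12} to the positive definite distribution $D_+$ on the open submanifold $M_+$. Essentially no new analysis is required; the work is in setting up a smooth right action of $(\g,H)$ on $M_+$ and verifying that $D_+$ is $\beta$-compatible. Uniqueness will then come from the uniqueness part of Theorem~\ref{thm:4.12}.

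First I will construct the smooth right action $(\beta, \sigma)$ on $M_+$. Since $H.M_+ = M_+$, setting $\sigma_h(m) := h^{-1}.m$ gives a smooth right action of $H$ on $M_+$. For each $x \in \g$, let $\beta(x) \in \cV(M_+)$ denote the restriction to the open subset $M_+$ of the fundamental vector field $m \mapsto \frac{d}{dt}\big|_{t=0}\exp(-tx).m$ of the $G$-action on $M$. The smoothness of the $G$-action on $M$ immediately yields smoothness of $\hat\beta \: \g \times M_+ \to TM_+$ and the identity $\dot\sigma(x) = \beta(x)$ for $x \in \fh$. Moreover, each $\beta(x)$ is globally integrable on $M$ via the $G$-action, so its restriction to the open set $M_+$ is locally integrable. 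Therefore $(\beta, \sigma)$ satisfies Definition~\ref{def:3.1}.

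The central step will be the $\beta$-compatibility of $D_+$, already indicated in the paragraph preceding the statement. The $G$-invariance of $D$ gives, upon differentiation, $\cL^1_{\beta(x)} D = -\cL^2_{\beta(x)} D$ for $x \in \g$; the equivariance $\theta(g.m) = \tau(g).\theta(m)$ gives, upon differentiation at $g = \1$, the pushforward relation $\theta_* \beta(x) = \beta(\tau(x))$, and hence the identity $(\cL_{\beta(x)} \phi) \circ \theta = \cL_{\beta(\tau(x))}(\phi \circ \theta)$. Combining these two ingredients with the definition $D_+(\phi \otimes \psi) = D((\phi\circ\theta) \otimes \psi)$ will convert the symmetric infinitesimal $G$-invariance of $D$ into the asymmetric identity
\[
\cL^1_{\beta(x)} D_+ = -\cL^2_{\beta(\tau(x))} D_+, \qquad x \in \g,
\]
which is exactly the $\beta$-compatibility required by Theorem~\ref{thm:4.12}.

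With these hypotheses verified, Theorem~\ref{thm:4.12} applied to $(M_+, D_+, (\beta,\sigma))$ will directly produce the unique smooth unitary representation $(\pi^c, \cH_{D_+})$ of $G^c$ satisfying (i) and (ii). I expect no genuine obstacle beyond careful sign and variable tracking in the compatibility computation: it is precisely the $\theta$-twist in the definition of $D_+$ that produces the $\tau$ in the second Lie derivative and thereby converts the ``$G$-invariance'' of $D$ into the structure needed to apply Theorem~\ref{thm:4.12} on the Hilbert space $\cH_{D_+}$.
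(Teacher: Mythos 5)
Your proposal is correct and coincides with the paper's own argument: the paper likewise derives $\cL^1_{\beta(x)}D_+=-\cL^2_{\beta(\tau(x))}D_+$ from the infinitesimal $G$-invariance of $D$ together with the $\theta$-equivariance, and then invokes Theorem~\ref{thm:4.12} for the positive definite distribution $D_+$ on $M_+$. Your write-up merely makes explicit the construction of the smooth right action on $M_+$ and the local integrability of the restricted vector fields, which the paper leaves implicit.
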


\begin{ex} Reflection positive representations for $\Mot(\R^d)$ 
lead in particular to reflection positive representations 
of the compact symmetric Lie group 
$(G,H,\tau) = (\OO_d(\R), \OO_{d-1}(\R), \tau)$ which then leads to 
unitary representations of the $c$-dual group $G^c = \SO_{1,d-1}(\R)_0 
= L_+^\uparrow$. In particular, there are non-trivial reflection positive 
representations for compact groups. Here the case 
$d= 2$ and the free spin zero fields of mass $m > 0$ are of particular interest 
(cf.\ \cite{NO13}). 
\end{ex}

\begin{ex} We now 
consider $M = \R^d$, $M_+ = \R^d_+$, 
$\tau(x_0, \bx) = (-x_0, \bx)$ and the euclidean motion group 
$G = \R^d \rtimes \OO_d(\R)$. 
Then the  $G$-invariance of a distribution $D^\sharp$ on $M \times M$ means that 
it is determined by an $\OO_d(\R)$-invariant distribution $D \in C^{-\infty}(M)$ by 
\[ D^\sharp(\phi \otimes \psi) := D(\check\phi * \psi).\] 
For any reflection positive rotation invariant distribution $D \in C^{-\infty}(\R^d)$, 
we thus obtain a reflection positive representation 
$(\pi_\cE, \cE)$ of $G$ and a representation 
of the group $G^c = \R^d \rtimes \Spin_{1,d-1}(\R)$ on $\hat\cE \cong \cH_{D_+}$. 

The inclusion $\SO_{d-1}(\R) \to \OO_{1,d-1}(\R), g \mapsto \id_\R \times g$ 
induces a surjective homomorphism $\pi_1(\SO_{d-1}(\R)) \to \pi_1(\OO_{1,d-1}(\R))$, 
and since $\pi^c$ is compatible with the unitary representation 
$\hat\pi_H$ of $H$ on $\hat\cE$, it follows that $\pi^c$ factors through a 
representation of the connected Poincar\'e group $\R^d \rtimes \SO_{1,d-1}(\R)$.
%Using the generalization of the Bochner--Schwartz Theorem from \cite{NO12}, 
%we obtain an integral representation for $D\res_{\R^d_+}$: 
%\[ D(\phi)  = \int_0^\infty \int_{\R^{d-1}} 
%\int_{\R^d} e^{-\lambda x_0 + i \la y, \bx\ra} \oline{\phi(x_0,\bx)}\, dx_0\, d\bx\%,  d\mu(\lambda,y), 
%\quad \phi \in \cD(\R^d_+).\] 
%{\bf to be continued}   
\end{ex}

\appendix 

\section{Positive definite kernels and functions} 
\mlabel{app:d}

In this appendix we collect some definitions and results concerning 
positive definite functions and kernels. 

\begin{defn} \mlabel{def:1.5} Let $X$ be a set and $\cF$ be a complex Hilbert space.

\par (a)  A function $K \: X \times X \to B(\cF)$ is called a 
{\it $B(\cF)$-valued kernel}. A $B(\cF)$-valued kernel $K$ on $X$ is said to be 
{\it positive definite} if, 
for every finite sequence $(x_1, v_1), \ldots, (x_n,v_n)$ in $X \times \cF$, 
\[ \sum_{j,k = 1}^n \la K(x_j, x_k)v_k, v_j \ra \geq 0. \] 

\par (b) If $(S,*)$ is an involutive semigroup, then a 
function $\phi \: S \to B(\cF)$ is called {\it positive definite} 
if the kernel $K_\phi(s,t) := \phi(st^*)$ is positive definite. 
\end{defn}

Positive definite kernels can be characterized as those 
for which there exists a Hilbert space $\cH$ and a 
function $\gamma \: X \to B(\cH,\cF)$ such that 
\begin{equation}
  \label{eq:evalprod}
K(x,y) = \gamma(x)\gamma(y)^* \quad \mbox{ for } \quad x,y \in X 
\end{equation}
(cf.\ \cite[Thm.~I.1.4]{Ne00}). 
Here one may assume that the vectors 
$\gamma(x)^*v$, $x \in X, v \in \cF$, span a dense subspace of 
$\cH$. If this is the case, then the pair $(\gamma,\cH)$ is called a {\it realization 
of $K$}. 
The map $\Phi \: \cH \to \cF^X, \Phi(v)(x) := \gamma(x)v$, 
then realizes $\cH$ as a Hilbert subspace of $\cF^X$ 
with continuous point evaluations $\ev_x \: \cH \to \cF, f \mapsto f(x)$. 
Then $\Phi(\cH)$ is the unique Hilbert space in $\cF^X$ with continuous point evaluations 
$\ev_x$, for which $K(x,y) = \ev_x \ev_y^*$ for $x,y \in X$. 
We write $\cH_K \subeq \cF^X$ for this subspace and call it 
the {\it reproducing kernel Hilbert space with kernel~$K$}.
The dense subspace spanned by the elements of the form 
$K_m^* v$, $v \in \cF, m \in M$, is denoted $\cH_K^0$. 

For $\cF = \C$, we also write $K_m$ for the function (which corresponds to 
$K_m^*1$) which represents the evaluation in $m$ in the sense that 
$f(m) = \la f, K_m\ra$ for $f \in \cH_K$. 

\begin{ex} \mlabel{ex:vv-gns} (Vector-valued GNS construction) 
(cf.\ \cite[Sect.~3.1]{Ne00}) Let $(\pi, \cH)$ be a representation of the 
unital involutive semigroup $(S,*)$, $\cF \subeq \cH$ be a closed subspace for which 
$\pi(S)\cF$ is total in $\cH$ and $P \: \cH \to \cF$ denote the orthogonal projection. 
Then $\phi(s) := P\pi(s)P^*$ is a $B(\cF)$-valued positive definite function 
on $S$  with $\phi(\1) = \1_\cF$ because $\gamma(s) := P\pi(s) \in B(\cH,\cF)$ 
satisfies 
\[ \gamma(s)\gamma(t)^* = P\pi(st^*)P^* = \phi(st^*).\] 
The map 
\[ \Phi \: \cH \to \cF^S, \quad \Phi(v)(s) = \gamma(s)v = P \pi(s)v \] 
is an $S$-equivariant realization of $\cH$ as the reproducing kernel space
$\cH_\phi \subeq \cF^S$, on which $S$ acts by right translation, i.e., 
$(\pi_\phi(s)f)(t) = f(ts)$. 

Conversely, let $S$ be a unital involutive semigroup 
and $\phi \: S \to B(\cF)$ be a positive definite function with 
$\phi(\1) = \1_\cF$. 
Write $\cH_\phi \subeq \cF^S$ for the corresponding reproducing kernel space and 
$\cH_\phi^0$ for the dense subspace spanned by 
$\ev_s^*v, s \in S, v \in \cF$. 
Then $(\pi_\phi(s)f)(t) := f(ts)$ defines a 
$*$-representation of $S$ on $\cH_\phi^0$. 
We say that $\phi$ is {\it exponentially bounded} if 
all  operators $\pi_\phi(s)$ are bounded, so that we actually 
obtain a representation of $S$ by bounded operators on $\cH_\phi$. 
As $\1_\cF = \phi(\1) = \ev_\1 \ev_\1^*$, the map 
$\ev_\1^* \: \cF \to \cH$ is an isometric inclusion, so that we may identify 
$\cF$ with a subspace of $\cH$. Then $\ev_\1 \: \cH \to \cF$ corresponds to the 
orthogonal projection onto $\cF$ and 
$\ev_\1 \circ \pi_\phi(s) = \ev_s$ leads to 
\begin{equation}
  \label{eq:factori}
\phi(s) = \ev_s \ev_\1^* =\ev_\1 \pi_\phi(s) \ev_\1^*.
\end{equation}

If $S = G$ is a group with $s^* = s^{-1}$, then $\phi$ is always exponentially bounded and the 
representation $(\pi_\phi, \cH_\phi)$ is unitary. 
\end{ex}

\end{document}